\sloppy\pagestyle{plain}
\newtheorem{theorem}{Theorem}[section]
\newtheorem{lemma}[theorem]{Lemma}
\newtheorem{corollary}[theorem]{Corollary}
\newtheorem{conjecture}[theorem]{Conjecture}
\newtheorem*{conjecture*}{Conjecture}
\newtheorem{problem}[theorem]{Problem}
\theoremstyle{definition}
\newtheorem{example}[theorem]{Example}
\newtheorem{definition}[theorem]{Definition}
\theoremstyle{remark}
\makeatletter\@addtoreset{equation}{section} \makeatother
\author{Ivan Cheltsov}
\title{On a conjecture of Hong and Won}
\keywords{$\alpha$-invariant of Tian, del Pezzo surface, $K$-stability.}
\begin{document}

\begin{abstract}
We give an explicit counter-example to a conjecture of Kyusik Hong and Joonyeong~Won about $\alpha$-invariants of polarized smooth del Pezzo surfaces of degree one.
\end{abstract}

\sloppy

\maketitle

All varieties are assumed to be algebraic, projective and defined over $\mathbb{C}$.

\section{Introduction}
\label{section:intro}

In \cite{Tian}, Tian defined the $\alpha$-invariant of a smooth Fano variety and proved

\begin{theorem}[{\cite{Tian}}]
\label{theorem:Tian}
Let $X$ be a smooth Fano variety of dimension $n$.
Suppose that
$$
\alpha(X)>\frac{n}{n+1}.
$$
Then $X$ admits a K\"ahler--Einstein metric.
\end{theorem}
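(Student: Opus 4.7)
The plan is to attack this by the continuity method applied to the complex Monge--Amp\`ere equation equivalent to the K\"ahler--Einstein condition. Fix a reference K\"ahler metric $\omega_0$ representing $c_1(X)$ and let $h_0$ denote its Ricci potential. I would study the family
\begin{equation*}
(\omega_0 + i\partial\bar\partial \varphi_t)^n = e^{h_0 - t\varphi_t}\,\omega_0^n, \qquad t \in [0,1],
\end{equation*}
so that a smooth solution at $t=1$ produces a K\"ahler--Einstein metric $\omega_0 + i\partial\bar\partial\varphi_1$. The set $S \subseteq [0,1]$ of parameters admitting a smooth solution is nonempty (it contains $t=0$ by Yau's theorem) and open by the implicit function theorem applied to the linearized operator, using that $-\mathrm{Ric}(\omega_t) + t\omega_t$ has no kernel for $t<1$. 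Everything then reduces to showing that $S$ is closed.

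By the standard Aubin--Yau chain of a priori estimates, closedness follows once I establish a uniform $C^0$ bound $\|\varphi_t\|_\infty \leq C$ independent of $t$: the Laplacian estimate bootstraps this to a $C^2$ bound, and Evans--Krylov plus Schauder theory yield bounds in every $C^{k,\alpha}$. Normalize $\sup_X \varphi_t = 0$; then $-\varphi_t$ is an $\omega_0$-plurisubharmonic function with supremum zero, so the hypothesis $\alpha(X) > n/(n+1)$ furnishes, for some $\alpha$ with $n/(n+1) < \alpha < \alpha(X)$, a uniform bound
\begin{equation*}
\int_X e^{-\alpha(-\varphi_t)}\,\omega_0^n \leq C_1.
\end{equation*}

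The core step is to convert this integral bound on $e^{\alpha\varphi_t}$ into a pointwise lower bound on $\varphi_t$. I would test the Monge--Amp\`ere equation against suitable powers $(-\varphi_t)^p$ and integrate by parts, producing an $L^{p+1}$ estimate on $\nabla(-\varphi_t)^{(p+1)/2}$ in terms of an $L^p$ estimate of $-\varphi_t$; combined with the Sobolev inequality on $(X,\omega_0)$, this is a Moser iteration that upgrades $L^p$ control to $L^\infty$ control, provided one can start the iteration. The $\alpha$-invariant bound supplies exactly this seed estimate. Tracking the exponents through the iteration, the numerical condition that lets the scheme close is precisely $\alpha(n+1) > n$, i.e.\ the hypothesis in the theorem.

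The genuine obstacle is this $C^0$ estimate and its delicate numerology; openness and the higher-order estimates are essentially standard machinery. Once $\|\varphi_t\|_\infty \leq C$ is in hand uniformly in $t$, $S$ is both open and closed in $[0,1]$, hence equal to $[0,1]$, and the smooth solution $\varphi_1$ yields the desired K\"ahler--Einstein metric on $X$.
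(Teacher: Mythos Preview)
The paper does not prove this theorem at all: Theorem~\ref{theorem:Tian} is stated with a citation to \cite{Tian} and used as a black box, so there is no ``paper's own proof'' to compare against. Your sketch is a reasonable outline of Tian's original argument via the continuity method and the $\alpha$-invariant $C^0$ estimate, but for the purposes of this paper no proof is required; the theorem functions purely as background motivation for studying $\alpha$-invariants.
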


Two-dimensional smooth Fano varieties are also known as smooth del Pezzo surfaces.
The possible values of their $\alpha$-invariants are given by

\begin{theorem}[{\cite[Theorem~1.7]{Ch08}}]
\label{theorem:Cheltsov}
Let $S$ be a smooth del Pezzo surface. Then
$$
\alpha(S)=\left\{%
\aligned
&\frac{1}{3}\ \mathrm{if}\ S\cong\mathbb{F}_{1}\ \mathrm{or}\ K_{S}^{2}\in\{7,9\},\\%
&\frac{1}{2}\ \mathrm{if}\ S\cong\mathbb{P}^{1}\times\mathbb{P}^{1}\ \mathrm{or}\ K_{S}^{2}\in\{5,6\},\\%
&\frac{2}{3}\ \mathrm{if}\ K_{S}^{2}=4,\\%
&\frac{2}{3}\ \mathrm{if}\ S\ \mathrm{is\ a\ cubic\ surface\ in}\ \mathbb{P}^{3}\ \mathrm{with\ an\ Eckardt\ point},\\%
&\frac{3}{4}\ \mathrm{if}\ S\ \mathrm{is\ a\ cubic\ surface\ in}\ \mathbb{P}^{3}\ \mathrm{without\ Eckardt\ points},\\%
&\frac{3}{4}\ \mathrm{if}\ K_{S}^{2}=2\ \mathrm{and}\ |-K_{S}|\ \mathrm{has\ a\ tacnodal\ curve},\\%
&\frac{5}{6}\ \mathrm{if}\ K_{S}^{2}=2\ \mathrm{and}\ |-K_{S}|\ \mathrm{has\ no\ tacnodal\ curves},\\%
&\frac{5}{6}\ \mathrm{if}\ K_{S}^{2}=1\ \mathrm{and}\ |-K_{S}|\ \mathrm{has\ a\ cuspidal\ curve},\\%
&1\ \mathrm{if}\ K_{S}^{2}=1\ \mathrm{and}\ |-K_{S}|\ \mathrm{has\ no\ cuspidal\ curves}.\\%
\endaligned\right.%
$$
\end{theorem}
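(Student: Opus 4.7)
My plan is to compute, for each class of smooth del Pezzo surface $S$ appearing in the statement, matching upper and lower bounds for
$$
\alpha(S) = \inf\Bigl\{\operatorname{lct}(S, D) \;:\; D \geq 0,\ D \sim_{\mathbb{Q}} -K_{S}\Bigr\}.
$$
The upper bound $\alpha(S) \leq c$ in each case I would obtain by writing down an explicit effective $\mathbb{Q}$-divisor $D \sim_{\mathbb{Q}} -K_S$ of log canonical threshold $c$. The natural candidates are built from $(-1)$-curves and the negative section (for the higher-degree cases $\mathbb{P}^{2}$, $\mathbb{F}_{1}$, $\mathbb{P}^{1}\times\mathbb{P}^{1}$, and $K_{S}^{2}\in\{4,5,6,7\}$), from the three coplanar lines through an Eckardt point on a cubic surface, from an anticanonical tacnodal curve in degree $2$, and from an anticanonical cuspidal curve in degree $1$. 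In each instance the log canonical threshold reduces to a standard local computation at the worst singularity.

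The hard direction is the lower bound $\alpha(S) \geq c$, which I would attack by contradiction. Suppose there exist an effective $D \sim_{\mathbb{Q}} -K_S$ and a point $p \in S$ with $(S, \lambda D)$ not log canonical at $p$ for some $\lambda < c$. The standard toolkit applies: the multiplicity bound $\mathrm{mult}_{p}(D) > 1/\lambda$; B\'ezout-type inequalities $D \cdot C \geq \mathrm{mult}_{p}(D)\cdot\mathrm{mult}_{p}(C)$ for every curve $C \ni p$ of small anticanonical degree, applied in particular to $(-1)$-curves, conics, and members of $|-K_{S}|$ through $p$; inversion of adjunction reducing the problem to a log canonical threshold computation on a well-chosen curve through $p$; and the trick of peeling off any irreducible component of $D$ that carries too much weight. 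Combined with the explicit configuration of $(-1)$-curves and the structure of the anticanonical linear system in each degree, these tools should cut the possibilities down to essentially the divisors used for the upper bound.

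The principal obstacle is the case $K_{S}^{2}=1$, where $|-K_{S}|$ is only a pencil with a single base point and the geometry of its singular members is most delicate. Here I would split the argument according to whether $p$ is the anticanonical base point, whether $p$ lies on a $(-1)$-curve, and which singular member of $|-K_{S}|$ passes through $p$. The crux is to show that any obstruction to $\alpha(S) \geq 5/6$ must come from a cuspidal fibre of the anticanonical fibration, and that, when no such fibre exists, refined estimates on the multiplicities of $D$ along an irreducible at-worst-nodal anticanonical curve through $p$ push the bound all the way to $1$. The analogous dichotomies in degree $2$ (with tacnodes in place of cusps) and in degree $3$ (via Eckardt points) should then fall into the same template once the degree $1$ case is understood.
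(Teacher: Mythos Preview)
The paper does not prove this theorem; it is quoted from \cite[Theorem~1.7]{Ch08} and used as background. There is therefore no ``paper's own proof'' to compare against. Your outline is a faithful sketch of the standard strategy actually carried out in \cite{Ch08}: explicit divisors for the upper bounds, and for the lower bounds a contradiction argument combining Skoda-type multiplicity estimates, inversion of adjunction along low-degree curves (especially $(-1)$-curves and anticanonical members through the bad point), and the convexity/replacement trick (Lemma~\ref{lemma:convexity} here) to strip off overweighted components. The degree~$1$ and~$2$ cases are indeed the delicate ones, and your proposed case split (base point versus not, nodal versus cuspidal anticanonical member, tacnodal versus not) matches how the argument in \cite{Ch08} is organized.

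One caution: what you wrote is a plan, not a proof. The genuine work in \cite{Ch08} lies in the detailed local computations after one or two blow-ups in the cuspidal/tacnodal cases, where naive intersection bounds fall just short and one must track the multiplicities of the proper transform along the exceptional configuration carefully (in the spirit of Lemmas~\ref{lemma:local-1}--\ref{lemma:local-8} of the present paper). Your sketch acknowledges this but does not indicate how the ``refined estimates'' actually close the gap; until those are written down, the lower bounds for $K_S^2\leqslant 2$ remain unproved.
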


Let $X$ be an arbitrary smooth algebraic variety, and let $L$ be an ample $\mathbb{Q}$-divisor on it.
In~\cite{Tian2012},~Tian defined a new invariant $\alpha(X,L)$ that generalizes the classical $\alpha$-invariant.
If~$X$ is a smooth Fano variety, then $\alpha(X)=\alpha(X,-K_X)$.
By~\cite[Theorem~A.3]{ChSh}, one has
$$
\alpha\big(X,L\big)=\mathrm{sup}\left\{\lambda\in\mathbb{Q}\ \Bigg|%
\aligned
&\text{the log pair}\ \left(X, \lambda D\right)\ \text{is log canonical}\\
&\text{for every effective $\mathbb{Q}$-divisor}\ D\sim_{\mathbb{Q}} L
\endaligned\Bigg.\right\}\in\mathbb{R}_{>0}.%
$$

In \cite{Dervan1}, Dervan proved

\begin{theorem}[{\cite[Theorem~1.1]{Dervan1}}]
\label{theorem:Dervan}
Suppose that
$-K_X-\frac{n}{n+1}\frac{-K_X\cdot L^{n-1}}{L^n}L$
is nef, and
$$
\alpha\big(X,L\big)>\frac{n}{n+1}\frac{-K_X\cdot L^{n-1}}{L^n}.
$$
Then the pair $(X,L)$ is $K$-stable.
\end{theorem}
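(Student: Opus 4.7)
The plan is to adapt the approach of Odaka and Sano, who proved the Fano analogue of this statement via the classical $\alpha$-invariant. $K$-stability of $(X,L)$ amounts to showing that the Donaldson--Futaki invariant $\mathrm{DF}(\mathcal{X},\mathcal{L})$ is strictly positive on every non-trivial normal test configuration $(\mathcal{X},\mathcal{L})$ of $(X,L)$, so the goal is to produce a single lower bound on $\mathrm{DF}$ that uses both hypotheses of the theorem.

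Setting $\mu = (-K_X\cdot L^{n-1})/L^n$, I would first decompose
$$
-K_X=\frac{n}{n+1}\mu\, L+\Bigl(-K_X-\frac{n}{n+1}\mu\, L\Bigr),
$$
where the second summand is nef by hypothesis. Next I would invoke the intersection-theoretic formula of Odaka--Wang for $\mathrm{DF}(\mathcal{X},\mathcal{L})$ on a suitable equivariant compactification $\overline{\mathcal{X}}\to\mathbb{P}^{1}$, which identifies $\mathrm{DF}$ with a linear combination of $\overline{\mathcal{L}}^{n+1}$ and $\overline{\mathcal{L}}^{n}\cdot K_{\overline{\mathcal{X}}/\mathbb{P}^{1}}$. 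Pulling back the decomposition of $-K_X$ through the test configuration, the contribution of the nef summand to these intersection numbers is non-negative, so it is enough to prove the inequality with $-K_X$ replaced by $\tfrac{n}{n+1}\mu\, L$.

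The remaining ingredient is the bound $\alpha(X,L)>\tfrac{n}{n+1}\mu$. By the log canonical threshold characterisation of $\alpha(X,L)$ recalled before the statement, this inequality gives a uniform lower bound on the log canonical thresholds of all effective $\mathbb{Q}$-divisors $\mathbb{Q}$-linearly equivalent to $L$. Applying this to a general member of $|\mathcal{L}|_{\mathbb{Q}}$ restricted to the central fibre, together with an Odaka--Sano type comparison of the global log canonical threshold with the discrepancies of $\overline{\mathcal{X}}$ along the central fibre, converts the $\alpha$-bound into a lower bound on the relevant intersection numbers and hence on $\mathrm{DF}(\mathcal{X},\mathcal{L})$.

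The main obstacle I expect is the precise bookkeeping linking the global $\alpha$-invariant of $(X,L)$ to the intersection numbers in the Odaka--Wang formula, and in particular ensuring that the strict inequality $\alpha(X,L)>\tfrac{n}{n+1}\mu$ is preserved after passing to the central fibre. The nefness hypothesis is what makes this step work: it guarantees that the gap between $-K_X$ and $\tfrac{n}{n+1}\mu\, L$ only contributes non-negatively to $\mathrm{DF}$, so the strict positivity coming from the $\alpha$-invariant side is not destroyed.
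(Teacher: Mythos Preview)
The paper does not contain a proof of this statement. Theorem~\ref{theorem:Dervan} is quoted verbatim from \cite[Theorem~1.1]{Dervan1} and used as a black box; the only role it plays in the paper is to deduce Theorem~\ref{theorem:stable} and Corollary~\ref{corollary:Dervan-degree-1} from the computation of $\alpha(S,-K_S+\lambda C)$. There is therefore nothing in the paper to compare your proposal against.

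For what it is worth, your outline is broadly the shape of Dervan's actual argument: he does generalise the Odaka--Sano approach, works with the intersection-theoretic Donaldson--Futaki formula, and uses the nefness of $-K_X-\tfrac{n}{n+1}\mu L$ to control the sign of the ``extra'' term so that the strict inequality on $\alpha(X,L)$ forces $\mathrm{DF}>0$. But the present paper neither reproduces nor sketches that argument, so your write-up is addressing a different task than the one the paper sets itself.
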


Donaldson, Tian and Yau conjectured that the following conditions are equivalent:
\begin{itemize}
\item the pair $(X,L)$ is $K$-polystable,
\item the variety $X$ admits a constant scalar curvature K\"ahler metric in $\mathrm{c}_1(L)$.
\end{itemize}
In \cite{CDS}, this conjecture has been proved in the case when $X$ is a Fano variety and $L=-K_X$.
Therefore, Dervan's Theorem~\ref{theorem:Dervan} is a generalization of Tian's Theorem~\ref{theorem:Tian}.

For smooth del Pezzo surfaces, Theorem~\ref{theorem:Dervan} gives

\begin{theorem}[{\cite{HongWon,ChJ}}]
\label{theorem:stable}
Let $S$ be a smooth del Pezzo surface such that $K_S^2=1$ or $K_S^2=2$.
Let $A$ be an ample $\mathbb{Q}$-divisor on the surface $S$ such that the divisor
$$
-K_S-\frac{2}{3}\frac{-K_S\cdot A}{A^2}A
$$
is nef. Then the pair $(S,A)$ is $K$-stable.
\end{theorem}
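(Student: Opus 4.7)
The plan is to apply Dervan's theorem (Theorem~\ref{theorem:Dervan}) with $n=2$, so that $\frac{n}{n+1}=\frac{2}{3}$. The nefness hypothesis of Dervan's theorem then coincides with the one assumed in our statement, and the task reduces to verifying the $\alpha$-invariant inequality
\[
\alpha(S, A) \;>\; \frac{2}{3}\cdot\frac{-K_S\cdot A}{A^2}.
\]
Writing $\mu = (-K_S\cdot A)/A^2$, the nefness assumption reads that $-K_S-\tfrac{2}{3}\mu A$ is nef.

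To verify this $\alpha$-invariant inequality, I would argue by contradiction. Suppose there is an effective $\mathbb{Q}$-divisor $D\sim_{\mathbb{Q}} A$ with $(S,\tfrac{2}{3}\mu D)$ not log canonical at some point $P\in S$. Since $-K_S-\tfrac{2}{3}\mu A$ is nef and $A$ is ample, for every small $\epsilon>0$ the class $-K_S-(\tfrac{2}{3}\mu-\epsilon)A$ is ample, hence represented by an effective $\mathbb{Q}$-divisor $M_{\epsilon}$. The sum $\tfrac{2}{3}\mu D+M_{\epsilon}$ is then an effective $\mathbb{Q}$-divisor in the class of $-K_S+\epsilon A$, and the failure of log canonicity of $(S,\tfrac{2}{3}\mu D)$ at $P$ propagates to this almost-anti-canonical divisor. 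This reduces the problem to studying log canonicity of divisors close to the anti-canonical class, where Theorem~\ref{theorem:Cheltsov} is available.

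The main obstacle is that the naive bound coming from $\alpha(S)\geq\tfrac{3}{4}$ in Theorem~\ref{theorem:Cheltsov} only gives $\alpha(S,A)\geq\alpha(S)\cdot\tfrac{2}{3}\mu\leq\tfrac{2}{3}\mu$, which is not strictly greater than $\tfrac{2}{3}\mu$. To obtain the strict inequality one must perform a finer analysis at the point $P$ using the full classification of Theorem~\ref{theorem:Cheltsov}: the divisors in $|-K_S|$ with log canonical threshold strictly less than $1$ are very special (cuspidal anti-canonical curves for $K_S^2=1$, tacnodal anti-canonical curves for $K_S^2=2$), and the nefness of $-K_S-\tfrac{2}{3}\mu A$ forces intersection-theoretic inequalities on $A$ that are incompatible with the coexistence of such an exceptional curve through $P$ and a non-log-canonical effective $\mathbb{Q}$-divisor in $|A|_{\mathbb{Q}}$. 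This case-by-case local analysis is the content of the arguments in \cite{HongWon} and \cite{ChJ}, which together yield the desired strict inequality and complete the application of Dervan's theorem.
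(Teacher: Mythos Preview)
Your high-level plan matches the paper exactly: the paper does not give a detailed proof of this theorem but simply states that ``Theorem~\ref{theorem:Dervan} gives'' it, with the citation \cite{HongWon,ChJ} indicating that the required $\alpha$-invariant inequality
\[
\alpha(S,A)>\frac{2}{3}\cdot\frac{-K_S\cdot A}{A^2}
\]
is established in those references. So the core of your proposal --- apply Dervan with $n=2$, then defer the $\alpha$-bound to \cite{HongWon,ChJ} --- is precisely the paper's argument.

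However, the intermediate paragraph you insert (the construction of $M_\epsilon$ and the ``naive bound'' discussion) is both unnecessary and confused. First, the claimed inequality $\alpha(S,A)\geqslant\alpha(S)\cdot\tfrac{2}{3}\mu$ has no clear meaning or justification; there is no general scaling relation of that form between $\alpha(S,A)$ and $\alpha(S)$. Second, the $M_\epsilon$ trick produces an effective divisor $\mathbb{Q}$-equivalent to $-K_S+\epsilon A$, not to $-K_S$, so Theorem~\ref{theorem:Cheltsov} does not apply directly; you would still need exactly the kind of polarized $\alpha$-invariant estimate that \cite{HongWon,ChJ} supply. In other words, this detour does not reduce the problem to anything easier --- it just repackages the same difficulty. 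You should drop that paragraph and simply state that the $\alpha$-inequality is the content of \cite{HongWon,ChJ}, after which Dervan's theorem applies.
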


This result is closely related to

\begin{problem}[{cf. Theorem~\ref{theorem:Cheltsov}}]
\label{problem:del-Pezzo}
Let $S$ be a smooth del Pezzo surface.
Compute
$$
\alpha(S,A)\in\mathbb{R}_{>0}
$$
for every ample $\mathbb{Q}$-divisor $A$ on the surface $S$.
\end{problem}

Hong and Won suggested an answer to Problem~\ref{problem:del-Pezzo} for del Pezzo surfaces of degree one.
This answer is given by their \cite[Conjecture~4.3]{HongWon}, which is Conjecture~\ref{conjecture:Hong-Won} in Section~\ref{section:conjecture}.

The main result of this paper is

\begin{theorem}[{cf. Theorem~\ref{theorem:Cheltsov}}]
\label{theorem:main}
Let $S$ be a smooth del Pezzo surface such that $K_S^2=1$.
Let $C$ be an irreducible smooth curve in $S$ such that $C^2=-1$.
Then~there is a unique~curve
$$
\widetilde{C}\in\big|-2K_S-C\big|.
$$
The curve $\widetilde{C}$ is also irreducible and smooth. One has $\widetilde{C}^2=-1$ and $1\leqslant |C\cap\widetilde{C}|\leqslant C\cdot\widetilde{C}=3$.
Let $\lambda$ be a rational number such that $0\leqslant \lambda<1$.
Then $-K_S+\lambda C$ is ample and
$$
\alpha\big(S,-K_S+\lambda C\big)=\left\{%
\aligned
&\mathrm{min}\Big(\alpha(S),\frac{2}{1+2\lambda}\Big)\ \mathrm{if}\ |C\cap\widetilde{C}|\geqslant 2,\\%
&\mathrm{min}\Big(\alpha(S),\frac{4}{3+3\lambda}\Big)\ \mathrm{if}\ |C\cap\widetilde{C}|=1.\\%
\endaligned\right.
$$
\end{theorem}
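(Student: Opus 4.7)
My plan is to first establish the geometric setup for $\widetilde{C}$, then verify ampleness of $-K_S + \lambda C$, and then bracket $\alpha(S, -K_S + \lambda C)$ between a concrete upper bound from an explicit divisor and a matching lower bound proved by a local analysis at a putative non-log-canonical point. Setting $\widetilde{C} := -2K_S - C$, adjunction on $C$ gives $K_S \cdot C = -1$, which yields the numerical claims $\widetilde{C}^2 = -1$, $K_S \cdot \widetilde{C} = -1$, and $C \cdot \widetilde{C} = 3$ by direct computation. Uniqueness of $\widetilde{C}$ in $|{-2K_S-C}|$ follows from Riemann--Roch ($\chi(\mathcal{O}_S(\widetilde{C})) = 1$), Serre duality (killing $h^2$ since $(-K_S) \cdot (3K_S + C) = -2 < 0$), and Kawamata--Viehweg vanishing applied to the nef and big divisor $-3K_S - C$ (killing $h^1$). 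Irreducibility is immediate because on a del~Pezzo surface of degree $1$ every irreducible curve $E$ satisfies $-K_S \cdot E \geq 1$, so $-K_S \cdot \widetilde{C} = 1$ forces $\widetilde{C}$ to be reduced irreducible; smoothness then follows from $p_a(\widetilde{C}) = 1 + \tfrac{1}{2}(\widetilde{C}^2 + K_S \cdot \widetilde{C}) = 0$. The bounds $1 \leq |C \cap \widetilde{C}| \leq 3$ are immediate from $C \cdot \widetilde{C} = 3 > 0$ and smoothness of both curves. Ampleness of $L := -K_S + \lambda C$ follows by Nakai--Moishezon: $L^2 = 1 + 2\lambda - \lambda^2 > 0$, $L \cdot C = 1 - \lambda > 0$, and $L \cdot E \geq 1$ for any other irreducible $E$, since then $C \cdot E \geq 0$.

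For the upper bound on $\alpha(S, L)$, note first that $\alpha(S, L) \leq \alpha(S)$: any effective $\mathbb{Q}$-divisor $D_0 \sim_{\mathbb{Q}} -K_S$ yields $D_0 + \lambda C \sim_{\mathbb{Q}} L$ whose log canonical threshold is at most $\operatorname{lct}(S, D_0)$. For the tangency bound, I use the explicit $\mathbb{Q}$-divisor $D := (\lambda + \tfrac{1}{2})C + \tfrac{1}{2}\widetilde{C} \sim_{\mathbb{Q}} L$. A local discrepancy computation at any $p \in C \cap \widetilde{C}$ of local intersection multiplicity $m_p$ (iterating $m_p$ blow-ups at the intersection of successive strict transforms of $C$ and $\widetilde{C}$) shows that $(S, tD)$ is lc at $p$ precisely when $t(\lambda + \tfrac{1}{2}) \leq 1$, $t/2 \leq 1$, and, for $m_p \geq 2$, $t(1 + \lambda) \leq 1 + 1/m_p$. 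When $|C \cap \widetilde{C}| \geq 2$ a transversal intersection exists and yields the binding value $\tfrac{2}{1+2\lambda}$; when $|C \cap \widetilde{C}| = 1$ we have $m_p = 3$ and the binding value is $\tfrac{4}{3+3\lambda}$. Taking the minimum with $\alpha(S)$ gives the claimed upper bound.

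The matching lower bound is the main technical obstacle. Given an effective $D \sim_{\mathbb{Q}} L$, I decompose $D = \mu C + \nu \widetilde{C} + \Omega$ with $\Omega$ effective and $C, \widetilde{C} \not\subset \operatorname{Supp}(\Omega)$; intersecting with $C$ and $\widetilde{C}$ and using $\Omega \cdot C, \Omega \cdot \widetilde{C} \geq 0$ together with $D \cdot C = 1 - \lambda$ and $D \cdot \widetilde{C} = 1 + 3\lambda$ gives the universal bounds $\mu \leq \lambda + \tfrac{1}{2}$ and $\nu \leq \tfrac{1}{2}$ -- exactly the coefficients of the extremal $D$ above. Supposing for contradiction that $(S, tD)$ fails to be lc at some $p$ for some $t < t_0$ (the claimed value of $\alpha(S,L)$), I case-split on the position of $p$: if $p \notin C \cup \widetilde{C}$, intersect $\Omega$ with a smooth anticanonical curve through $p$ to bound $\operatorname{mult}_p \Omega$, then reduce to an $\alpha(S)$-type bound on a suitable modification of $D$; if $p$ lies on exactly one of $C, \widetilde{C}$, apply inversion of adjunction to that smooth rational curve, reducing the problem to a degree inequality on $\mathbb{P}^1$ that the universal bounds on $\mu, \nu$ preclude; and if $p \in C \cap \widetilde{C}$, iterate $m_p$ blow-ups tracking the discrepancies of $C, \widetilde{C}$, and $\Omega$, obtaining $t\mu \leq 1$, $t\nu \leq 1$, and (for $m_p \geq 2$) $t(\mu + \nu) \leq 1 + 1/m_p$ plus an $\Omega$-correction, which together with the universal bounds force $t \geq t_0$. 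The most delicate subcase is $p \in C \cap \widetilde{C}$ with $m_p = 3$, which forces $|C \cap \widetilde{C}| = 1$ and where any contribution of $\Omega$ through $p$ must be controlled carefully to preserve the tight bound $\tfrac{4}{3+3\lambda}$; this is where the two branches of the formula are distinguished.
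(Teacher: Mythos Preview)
Your setup for $\widetilde{C}$, the ampleness check, and the upper bound via the explicit divisor $(\lambda+\tfrac12)C+\tfrac12\widetilde{C}$ are all correct and match the paper. The serious problem is the lower bound, where your case analysis misidentifies the difficulty and leaves the main case unproved.

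First, for $p$ on exactly one of $C,\widetilde{C}$, plain inversion of adjunction is not enough. With $D=\epsilon C+\Delta$ you get only $(\Delta\cdot C)_p\le 1-\lambda+\epsilon$, and the adjunction bound $(\Delta\cdot C)_p\le 1$ fails once $\epsilon>\lambda$; your ``universal bound'' $\epsilon\le\lambda+\tfrac12$ does not save this. The paper needs refined two-blow-up criteria (its Lemmas~\ref{lemma:local-3} and~\ref{lemma:local-4}) together with the extra constraint $\epsilon+\mathrm{mult}_P(\Delta)\le 1+\lambda$ coming from the anticanonical pencil member through~$p$.

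Second, and more importantly, your treatment of $p\notin C\cup\widetilde{C}$ is where the real work hides, not the tangency point. The phrase ``reduce to an $\alpha(S)$-type bound on a suitable modification of $D$'' cannot be made to work directly: $D\sim_{\mathbb Q}-K_S+\lambda C$ is \emph{not} numerically anticanonical, so there is no $\alpha(S)$ statement to invoke, and bounding $\mathrm{mult}_p(D)$ via a smooth anticanonical curve gives only $\mathrm{mult}_p(D)\le 1+\lambda$, which is too weak. The paper's key maneuver is to first prove $P\notin C$ (its Lemma~\ref{lemma:3}), then \emph{contract} $C$ to obtain a degree-$2$ del Pezzo $\overline S$ on which the pushforward $\overline D$ \emph{is} anticanonical. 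Even then one cannot simply cite $\alpha(\overline S)$ (which may be $3/4<\mu$); instead one produces the unique curve $\overline Z\in|-K_{\overline S}|$ singular at $h(P)$ via the double cover $\overline S\to\mathbb P^2$ (Lemma~\ref{lemma:dP2}), pulls it back to $Z\subset S$ with $Z\sim -K_S+C$, and runs a further dichotomy on whether $Z$ is irreducible or splits as $Z_1+Z_2$ (Lemma~\ref{lemma:4} and the endgame), each branch handled by a tailored local lemma (Lemmas~\ref{lemma:local-5}--\ref{lemma:local-8}). None of this apparatus is hinted at in your proposal.

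Finally, note that your ``most delicate subcase'' $p\in C\cap\widetilde{C}$ with $m_p=3$ never actually occurs in the lower-bound argument: once $P\notin C$ is established, that case is vacuous. The branching on $|C\cap\widetilde{C}|$ in the theorem statement enters only through the upper bound; the lower-bound proof is uniform and uses merely $\mu\le\min\bigl(\alpha(S),\tfrac{2}{1+2\lambda}\bigr)$.
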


Theorem~\ref{theorem:main} implies that \cite[Conjecture~4.3]{HongWon}~is~wrong. To be precise, this follows from

\begin{example}
\label{example:main}
Let $S$ be a surface in $\mathbb{P}(1,1,2,3)$ that is given by
$$
w^2=z^3+zx^2+y^6,
$$
where $x$, $y$, $z$, $w$ are coordinates such that $\mathrm{wt}(x)=\mathrm{wt}(y)=1$, $\mathrm{wt}(z)=2$ and $\mathrm{wt}(w)=3$.
Then $S$ is a smooth del Pezzo surface and $K_S^2=1$.
Let $C$ be the curve in $X$ given by
$$
z=w-y^3=0.
$$
Similarly, let $\widetilde{C}$ be the curve in $S$ that is given by $z=w+y^3=0$. Then $C+\widetilde{C}\sim -2K_S$.
Both~curves $C$ and $\widetilde{C}$ are smooth rational curves such that $C^2=\widetilde{C}^2=-1$ and $|C\cap\widetilde{C}|=1$.
All singular curves in $|-K_S|$ are nodal. Then $\alpha(S)=1$ by Theorems~\ref{theorem:Cheltsov}, so that
$$
\alpha\big(S,-K_S+\lambda C\big)=\mathrm{min}\Big(1,\frac{4}{3+3\lambda}\Big)%
$$
by Theorem~\ref{theorem:main}. But \cite[Conjecture~4.3]{HongWon} says that $\alpha(S,-K_S+\lambda C)=\mathrm{min}(1,\frac{2}{1+2\lambda})$.
\end{example}

Theorem~\ref{theorem:main} has two applications. By Theorem~\ref{theorem:Dervan}, it implies

\begin{corollary}[{\cite[Theorem~1.2]{Dervan1}}]
\label{corollary:Dervan-degree-1}
Let $S$ be a smooth del Pezzo surface such that $K_S^2=1$.
Let $C$ be an irreducible smooth curve in $S$ such that $C^2=-1$. Fix $\lambda\in\mathbb{Q}$ such that
$$
3-\sqrt{10}\leqslant \lambda\leqslant\frac{\sqrt{10}-1}{9}.
$$
Then the pair $(S,-K_S+\lambda C)$ is $K$-stable.
\end{corollary}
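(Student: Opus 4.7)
The plan is to apply Dervan's Theorem~\ref{theorem:Dervan} (with $n=2$) to the pair $(S,A)$ for $A:=-K_S+\lambda C$. Two hypotheses must be checked: nefness of $M:=-K_S-\tfrac{2}{3}\tfrac{-K_S\cdot A}{A^{2}}A$ and the strict inequality $\alpha(S,A)>\tfrac{2}{3}\tfrac{-K_S\cdot A}{A^{2}}$. I would first exploit the $C\leftrightarrow\widetilde{C}$ symmetry to reduce to $\lambda\geq 0$: since $C+\widetilde{C}\sim-2K_S$, one has
$$
-K_S+\lambda C\sim_{\mathbb{Q}}(1+2\lambda)\bigl(-K_S+\mu\widetilde{C}\bigr),\qquad\mu=\tfrac{-\lambda}{1+2\lambda},
$$
with $1+2\lambda>0$ throughout $[3-\sqrt{10},0]$. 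A direct calculation shows that $\lambda=3-\sqrt{10}$ corresponds to $\mu=(\sqrt{10}-1)/9$; since $K$-stability is invariant under positive rescaling and Theorem~\ref{theorem:main} applies verbatim to the $(-1)$-curve $\widetilde{C}$, the range $\lambda\in[3-\sqrt{10},0]$ reduces to $\mu\in[0,(\sqrt{10}-1)/9]$, and I may assume $\lambda\in[0,(\sqrt{10}-1)/9]$.

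For this range, $-K_S\cdot A=1+\lambda$ and $A^{2}=1+2\lambda-\lambda^{2}$, so $M=-(1-t)K_S-t\lambda C$ with $t=\tfrac{2(1+\lambda)}{3(1+2\lambda-\lambda^{2})}$. Ampleness of $A$ is immediate: $A^{2}>0$ and $A\cdot E=1+\lambda(C\cdot E)>0$ on every $(-1)$-curve. On a smooth del Pezzo surface of degree one the Mori cone is generated by $(-1)$-curves, so nefness of $M$ reduces to $(1-t)-t\lambda(C\cdot E)\geq 0$ for every $(-1)$-curve $E$. A Hodge-index argument on $C+E$, using $(-K_S)^{2}=1$ and $(C+E)\cdot(-K_S)=2$, forces $C\cdot E\leq 3$, with equality exactly when $C+E\sim -2K_S$, i.e.\ when $E=\widetilde{C}$. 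Hence for $\lambda\geq 0$ the binding constraint is $(1-t)-3t\lambda\geq 0$, which clears to $9\lambda^{2}+2\lambda-1\leq 0$, equivalent to $\lambda\leq(\sqrt{10}-1)/9$. In the mirror case the binding curve becomes $E=C$, giving $\lambda^{2}-6\lambda-1\leq 0$, i.e.\ $\lambda\geq 3-\sqrt{10}$, which is the source of the lower endpoint.

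For the $\alpha$-invariant hypothesis, Theorem~\ref{theorem:main} (in the worst case $|C\cap\widetilde{C}|=1$) gives $\alpha(S,A)=\min\bigl(\alpha(S),\tfrac{4}{3+3\lambda}\bigr)$, with $\alpha(S)\geq 5/6$ by Theorem~\ref{theorem:Cheltsov}. The inequality $\tfrac{4}{3+3\lambda}>t$ reduces to $3\lambda^{2}-2\lambda-1<0$, which holds on $(-1/3,1)$; the inequality $\tfrac{5}{6}>t$ reduces to $5\lambda^{2}-6\lambda-1<0$, which holds on $\bigl(\tfrac{3-\sqrt{14}}{5},\tfrac{3+\sqrt{14}}{5}\bigr)$. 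Both intervals comfortably contain $[0,(\sqrt{10}-1)/9]$, so the $\alpha$-invariant condition is satisfied throughout. Combined with the nefness verification above, Theorem~\ref{theorem:Dervan} yields $K$-stability.

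The only non-routine ingredient is the Hodge-index bound $C\cdot E\leq 3$ (with unique equality at $E=\widetilde{C}$) for $(-1)$-curves on a degree-one del Pezzo surface; this is what produces the specific endpoint $(\sqrt{10}-1)/9$ and, via the symmetry above, the endpoint $3-\sqrt{10}$. Once that bound is in hand, the whole argument reduces to solving two elementary quadratic inequalities in $\lambda$.
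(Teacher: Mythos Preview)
Your proof is correct and follows exactly the route the paper indicates: the paper states that Corollary~\ref{corollary:Dervan-degree-1} follows from Theorem~\ref{theorem:main} via Theorem~\ref{theorem:Dervan} (and, alternatively, from Theorem~\ref{theorem:stable}), but gives no further details. Your explicit verification of the nefness condition---using the Hodge-index bound $C\cdot E\leqslant 3$ on $(-1)$-curves, with equality only at $E=\widetilde{C}$---and of the $\alpha$-invariant inequality is precisely the computation the paper leaves implicit.
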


By \cite[Remark~1.1.3]{CheltsovParkWon3}, Theorem~\ref{theorem:main} implies

\begin{corollary}
\label{corollary:cylinders}
Let $S$ be a smooth del Pezzo surface. Suppose that $K_S^2=1$ and $\alpha(S)=1$.
Let $C$ be an irreducible smooth curve in $S$ such that $C^2=-1$. Fix $\lambda\in\mathbb{Q}$ such that
$$
-\frac{1}{4}\leqslant \lambda\leqslant\frac{1}{3}.
$$
Then $S$ does not contain $(-K_S+\lambda C)$-polar cylinders (see \cite[Definition 1.2.1]{CheltsovParkWon3}).
\end{corollary}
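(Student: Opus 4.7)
My plan is to prove $\alpha(S,-K_S+\lambda C)\geq 1$ throughout the interval $[-\tfrac{1}{4},\tfrac{1}{3}]$, which, combined with \cite[Remark~1.1.3]{CheltsovParkWon3}, will immediately give the absence of $(-K_S+\lambda C)$-polar cylinders. Theorem~\ref{theorem:main} is the principal tool; the two sub-cases $|C\cap\widetilde{C}|=1$ and $|C\cap\widetilde{C}|\geq 2$ must be carried along in parallel, as they produce different formulas.

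For $\lambda\in[0,\tfrac{1}{3}]$ Theorem~\ref{theorem:main} applies directly. Since $\alpha(S)=1$ by hypothesis, it suffices to check that the two candidate expressions $2/(1+2\lambda)$ and $4/(3+3\lambda)$ are both at least $1$; the first is clear since $\lambda\leq\tfrac{1}{3}<\tfrac{1}{2}$, and the second is tight precisely at $\lambda=\tfrac{1}{3}$, which explains the upper endpoint in the statement.

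For negative $\lambda$ the argument will exploit the symmetry of the pair $\{C,\widetilde{C}\}$: since $C+\widetilde{C}\sim -2K_S$, the curve $\widetilde{C}$ plays exactly the role of $C$ in Theorem~\ref{theorem:main}, its associated partner being $C$ with the same intersection number $|C\cap\widetilde{C}|$. A short manipulation rewrites
$$-K_S+\lambda C=(1+2\lambda)\bigl(-K_S+\lambda'\widetilde{C}\bigr),\qquad \lambda'=\frac{-\lambda}{1+2\lambda},$$
and the substitution $\lambda\mapsto\lambda'$ sends $[-\tfrac{1}{4},0]$ bijectively onto $[0,\tfrac{1}{2}]$. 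Combined with the scaling identity $\alpha(S,cL)=c^{-1}\alpha(S,L)$, this reduces the negative range to a second application of Theorem~\ref{theorem:main} with $(C,\lambda)$ replaced by $(\widetilde{C},\lambda')$; the endpoint $\lambda=-\tfrac{1}{4}$ is calibrated so that $\lambda'=\tfrac{1}{2}$, exactly the threshold where the sub-case $|C\cap\widetilde{C}|\geq 2$ of Theorem~\ref{theorem:main} still returns $\alpha=1$.

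The main obstacle will be the sub-case $|C\cap\widetilde{C}|=1$ on the negative side, where after scaling the value is no longer $1$ but $4(1+2\lambda')/[3(1+\lambda')]$; I will need to verify that this quantity remains $\geq 1$ throughout $\lambda'\in[0,\tfrac{1}{2}]$, which reduces to the elementary inequality $1+5\lambda'\geq 0$. Collecting both ranges of $\lambda$ yields $\alpha(S,-K_S+\lambda C)\geq 1$, and \cite[Remark~1.1.3]{CheltsovParkWon3} completes the argument.
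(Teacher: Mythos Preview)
Your proposal is correct and follows the route the paper itself indicates: the paper states that Corollary~\ref{corollary:cylinders} follows from Theorem~\ref{theorem:main} via \cite[Remark~1.1.3]{CheltsovParkWon3}, and your argument supplies exactly those details---for $\lambda\ge 0$ you read off $\alpha\ge 1$ directly from Theorem~\ref{theorem:main}, and for $\lambda<0$ your symmetry manipulation $-K_S+\lambda C\sim_{\mathbb Q}(1+2\lambda)(-K_S+\lambda'\widetilde C)$ is precisely equation~\eqref{equation:second-curve}, so that your computation amounts to re-deriving the negative-$\lambda$ half of Corollary~\ref{corollary:main} and checking the resulting formulas are $\ge 1$.

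One minor remark: your explanation that $\lambda=-\tfrac14$ is ``calibrated so that $\lambda'=\tfrac12$, exactly the threshold where the sub-case $|C\cap\widetilde C|\ge 2$ still returns $\alpha=1$'' is a comment about the \emph{unscaled} invariant $\alpha(S,-K_S+\lambda'\widetilde C)$; after rescaling by $1+2\lambda'$ the value of $\alpha(S,-K_S+\lambda C)$ in fact stays $\ge 1$ on the whole ample range $(-\tfrac13,0)$ in both sub-cases (as one sees from Corollary~\ref{corollary:main}), so the lower endpoint $-\tfrac14$ in the statement is not forced by your inequality. This does not affect the validity of your proof for the stated interval.
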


Corollary~\ref{corollary:Dervan-degree-1} follows from Theorem~\ref{theorem:stable}. Corollary~\ref{corollary:cylinders} follows from \cite[Theorem 2.2.3]{CheltsovParkWon3}.

Let us describe the structure of this paper.
In~Section~\ref{section:conjecture}, we describe \cite[Conjecture~4.3]{HongWon}.
In~Section~\ref{section:pairs}, we present several well known local results about singularities of log pairs.
In~Section~\ref{section:eight-lemmas}, we prove eight local lemmas that are crucial for the proof of Theorem~\ref{theorem:main}.
In~Section~\ref{section:main}, we prove Theorem~\ref{theorem:main} using Lemmas~\ref{lemma:local-1}, \ref{lemma:local-2}, \ref{lemma:local-3}, \ref{lemma:local-4}, \ref{lemma:local-5}, \ref{lemma:local-6}, \ref{lemma:local-7}, \ref{lemma:local-8}.

\bigskip

{\bf Acknowledgements.}
The main result of this paper was proved in the summer 2016.
This~happened during the 35th workshop on Geometric Methods in Physics in Bialowieza.
We would like to thank the organizers of this workshop for creative working conditions.

\section{Conjecture of Hong and Won}
\label{section:conjecture}

Let $S$ be a smooth del Pezzo surface, and let $A$ be an ample $\mathbb{Q}$-divisor on~$S$. Put
$$
\mu=\mathrm{inf}\left\{\lambda\in\mathbb{Q}_{>0}\ \Big|\ \text{the $\mathbb{Q}$-divisor}\ K_{S}+\lambda A\ \text{is pseudo-effective}\right\}\in\mathbb{Q}_{>0}.%
$$
Then $K_S+\mu A$ is contained in the boundary of the Mori cone $\overline{\mathbb{NE}}(S)$ of the surface $S$.

Suppose that $K_S^2=1$. Then $\overline{\mathbb{NE}}(S)$ is polyhedral and is generated by $(-1)$-curves in~$S$.
By a $(-1)$-curve, we mean a smooth irreducible rational curve $E\subset S$ such that $E^2=-1$.

Let $\Delta_{A}$ be the smallest extremal face of the Mori cone $\overline{\mathbb{NE}}(S)$ that contains $K_{S}+\mu A$.
Let $\phi\colon S\to Z$ be the contraction given by the face $\Delta_{A}$. Then
\begin{itemize}
\item either $\phi$ is a birational  morphism and $Z$ is a smooth del Pezzo surface,
\item or $\phi$ is a conic bundle and $Z\cong\mathbb{P}^1$.
\end{itemize}

If $\phi$ is birational and $Z\not\cong\mathbb{P}^1\times\mathbb{P}^1$, we call $A$ a divisor of $\mathbb{P}^2$-type. In this case, we have
$$
K_{S}+\mu A\sim_{\mathbb{Q}}\sum_{i=1}^8a_iE_i,
$$
where $E_1$, $E_2$, $E_3$, $E_4$, $E_5$, $E_6$, $E_7$, $E_8$ are eight disjoint $(-1)$-curves in our surface $S$,
and~$a_1$, $a_2$, $a_3$, $a_4$, $a_5$, $a_6$, $a_7$, $a_8$ are non-negative rational numbers such that
$$
1>a_1\geqslant a_2\geqslant a_3\geqslant a_4\geqslant a_5\geqslant a_6\geqslant a_7\geqslant a_8\geqslant 0.
$$
In this case, we put $s_A=a_2+a_3+a_4+a_5+a_6+a_7+a_8$.

If our ample divisor $A$ is not a divisor of $\mathbb{P}^2$-type, then the surface $S$ contains a smooth irreducible rational curve $C$ such that $C^2=0$ and
$$
K_{S}+\mu A\sim_{\mathbb{Q}}\delta C+\sum_{i=1}^7a_iE_i,
$$
where $E_1$, $E_2$, $E_3$, $E_4$, $E_5$, $E_6$, $E_7$ are disjoint $(-1)$-curves in $S$ that are disjoint from~$C$,
and~$\delta$, $a_1$, $a_2$, $a_3$, $a_4$, $a_5$, $a_6$, $a_7$ are non-negative rational numbers such that
$$
1>a_1\geqslant a_2\geqslant a_3\geqslant a_4\geqslant a_5\geqslant a_6\geqslant a_7\geqslant 0.
$$
In this case, let $\psi\colon S\to\overline{S}$ be the contraction of the curves $E_1$, $E_2$, $E_3$, $E_4$, $E_5$, $E_6$, $E_7$,
and let $\eta\colon S\to\mathbb{P}^1$ be a conic bundle given by $|C|$.
Then either $\overline{S}\cong\mathbb{F}_1$ or $\overline{S}\cong\mathbb{P}^1\times\mathbb{P}^1$.
In both cases, there exists a commutative diagram
$$
\xymatrix{
S\ar@{->}[rd]_\eta\ar@{->}[rr]^\psi&&\overline{S}\ar@{->}[ld]^\pi\\
&\mathbb{P}^1&}
$$
where $\pi$ is a natural projection. Then $\delta>0$ $\iff$ $\phi$ is a conic bundle and~$\phi=\eta$.
Similarly, if $\phi$ is birational and $Z\cong\mathbb{P}^1\times\mathbb{P}^1$, then $\delta=0$, $a_7>0$, and $\phi=\psi$. Then
\begin{itemize}
\item we call $A$ a divisor of $\mathbb{F}_1$-type in the case when $\overline{S}\cong\mathbb{F}_1$,
\item we call $A$ a divisor of $\mathbb{P}^1\times\mathbb{P}^1$-type in the case when $\overline{S}\cong\mathbb{P}^1\times\mathbb{P}^1$.
\end{itemize}
In both cases, we put $s_A=a_2+a_3+a_4+a_5+a_6+a_7$.

In order to study $\alpha(S,A)$, we may assume that $\mu=1$, because
$$
\alpha\big(S,A\big)=\frac{\alpha\big(S,\mu A\big)}{\mu}.
$$

If $A$ is a divisor of $\mathbb{P}^2$-type, let us define a number $\alpha_c(S,A)$ as follows:
\begin{itemize}
\item if $s_A>4$, we put $\alpha_c(S,A)=\frac{1}{2+a_1}$,
\item if $4\geqslant s_A>1$, we let $\alpha_c(S,A)$ to be
$$
\mathrm{max}\Bigg(\frac{2}{2+2a_1+s_A-a_2-a_3},\frac{4}{3+4a_1+2s_A-a_2-a_3-a_4},\frac{3}{2+3a_1+s_A}\Bigg),
$$
\item if $1\geqslant s_A$, we put $\alpha_c(S,A)=\mathrm{min}(\frac{2}{1+2a_1+s_A},1)$.
\end{itemize}

Similarly, if $A$ is a divisor of $\mathbb{F}_1$-type, we define $\alpha_c(S,A)$ as follows:
\begin{itemize}
\item if $s_A>4$, we put $\alpha_c(S,A)=\frac{1}{2+a_1+\delta}$,
\item if $4\geqslant s_A>1$, we let $\alpha_c(S,A)$ to be
$$
\mathrm{max}\Bigg(\frac{2}{2+2a_1+s_A-a_2-a_3+2\delta},\frac{4}{3+4a_1+2s_A-a_2-a_3-a_4+4\delta},\frac{3}{2+3a_1+s_A+3\delta}\Bigg),
$$
\item if $1\geqslant s_A$, we put $\alpha_c(S,A)=\mathrm{min}(\frac{2}{1+2a_1+s_A+2\delta},1)$.
\end{itemize}

Finally, if $A$ is a divisor of $\mathbb{P}^1\times\mathbb{P}^1$-type, we define $\alpha_c(S,A)$ as follows:
\begin{itemize}
\item if $s_A>4$, we put $\alpha_c(S,A)=\frac{1}{2+a_1+\delta}$,
\item if $4\geqslant s_A>1$, we let $\alpha_c(S,A)$ to be
$$
\mathrm{max}\Bigg(\frac{2}{2+s_{A}-a_7-a_2-a_3+2\delta},\frac{4}{3+2s_{A}-2a_7-a_2-a_3-a_4+4\delta},\frac{3}{2+s_{A}-a_7+3\delta}\Bigg),
$$
\item if $1\geqslant s_A$, we put $\alpha_c(S,A)=\mathrm{min}(\frac{2}{1+s_{A}-a_7+2\delta},1)$.
\end{itemize}

The conjecture of Hong and Won is

\begin{conjecture}[{\cite[Conjecture~4.3]{HongWon}}]
\label{conjecture:Hong-Won}
If $\alpha(S)=1$, then $\alpha(S,A)=\alpha_c(S,A)$.
\end{conjecture}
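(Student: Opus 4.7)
The plan is to prove $\alpha(S,A)=\alpha_c(S,A)$ by establishing the two one-sided inequalities separately. After reducing to the normalized case $\mu=1$ via the identity $\alpha(S,A)=\alpha(S,\mu A)/\mu$, the argument splits according to the type of $A$ ($\mathbb{P}^2$-type, $\mathbb{F}_1$-type, or $\mathbb{P}^1\times\mathbb{P}^1$-type), and within each type according to the three ranges $s_A>4$, $4\geqslant s_A>1$, $s_A\leqslant 1$ that appear in the definition of $\alpha_c(S,A)$.

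For the upper bound $\alpha(S,A)\leqslant\alpha_c(S,A)$, I would exhibit, for each fraction appearing in the definition of $\alpha_c(S,A)$, an effective $\mathbb{Q}$-divisor $D\sim_{\mathbb{Q}}A$ whose log canonical threshold matches it. The natural construction is to pull back low degree divisors from the contracted model $Z$ (or from $\overline{S}$ in the conic bundle case) passing through the images of the curves $E_i$ with prescribed multiplicities, and then correct by an appropriate non-negative combination of the $E_i$ (and, if applicable, $\delta C$) so that the total class equals $A$ in $\mathrm{Pic}(S)\otimes\mathbb{Q}$. Specifically, the value $\frac{1}{2+a_1}$ comes from a cuspidal anticanonical-like curve through the image of $E_1$, the values $\frac{2}{\cdots}$, $\frac{3}{\cdots}$, $\frac{4}{\cdots}$ in the middle range come from conics, cubics, and quartics through two, three, and four of the images respectively, and the bound $\alpha_c(S,A)\leqslant 1$ in the small-$s_A$ regime uses a singular member of $|{-}K_S|$, which exists by the hypothesis $\alpha(S)=1$ together with Theorem~\ref{theorem:Cheltsov}.

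For the lower bound $\alpha(S,A)\geqslant\alpha_c(S,A)$, I would argue by contradiction: suppose there exist $\lambda<\alpha_c(S,A)$ and an effective $\mathbb{Q}$-divisor $D\sim_{\mathbb{Q}} A$ such that $(S,\lambda D)$ is not log canonical. First, subtract off the parts of $D$ supported on the curves $E_i$ (and on $C$), using the inequalities $D\cdot E_i\geqslant 0$ and $D\cdot C\geqslant 0$ to control the coefficients. Then localize the non-LC center: a non-LC \emph{curve} is excluded by intersecting $D$ against the nef divisors coming from $\phi$ and $\psi$. A non-LC \emph{point} $P$ is analyzed by the local lemmas~\ref{lemma:local-1}--\ref{lemma:local-8}, each of which handles one combinatorial configuration describing how $P$ sits relative to the curves $E_1,\ldots,E_8$ (and to $C$ and its fibers in the non-$\mathbb{P}^2$-type cases), and each of which yields a sharp local inequality between the multiplicity of $D$ at $P$ and intersection numbers fixed by $D\sim_{\mathbb{Q}}A$.

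The main obstacle, where I expect the argument to genuinely break, is the lower bound in the small-$s_A$ regime when the non-LC point $P$ lies on a single $(-1)$-curve $E_1$ and simultaneously on the unique curve $\widetilde{E}_1\in|{-}2K_S-E_1|$ produced by Theorem~\ref{theorem:main}. In this situation the local obstruction at $P$ is controlled by the number $|E_1\cap\widetilde{E}_1|$, and no purely numerical invariant of the class $A$ (such as the multiset $\{a_i\}$ and $\delta$ used in $\alpha_c(S,A)$) can distinguish the case $|E_1\cap\widetilde{E}_1|=1$ from the case $|E_1\cap\widetilde{E}_1|\geqslant 2$. Consequently the local lemmas will produce the bound $\frac{4}{3+3\lambda}$ of Theorem~\ref{theorem:main} whenever the tangential configuration occurs, strictly smaller than the $\frac{2}{1+2\lambda}$ demanded by $\alpha_c(S,A)$, and the proof attempt has to fail exactly here. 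Pinpointing this failure, rather than disguising it, is the most informative outcome of carrying out the plan, and it is precisely the discrepancy that Example~\ref{example:main} turns into an explicit counter-example.
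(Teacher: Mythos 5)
The statement you were asked about is not a theorem of the paper but the conjecture that the paper refutes, and your proposal correctly arrives at that outcome by essentially the paper's route: you identify the same obstruction, namely that in the small-$s_A$ regime the value of $\alpha(S,A)$ can depend on the geometric datum $|E_1\cap\widetilde{E}_1|$, which the purely numerical recipe defining $\alpha_c(S,A)$ cannot detect. Your localization of the failure agrees exactly with the paper: for $A=-K_S+\lambda C$ one has $K_S+A\sim_{\mathbb{Q}}\lambda C$, so $A$ is of $\mathbb{P}^2$-type with $a_1=\lambda$ and $s_A=0$, and the conjectured value $\min(1,\frac{2}{1+2\lambda})$ is contradicted by the true value $\min(\alpha(S),\frac{4}{3+3\lambda})$ of Theorem~\ref{theorem:main} precisely when $|C\cap\widetilde{C}|=1$ and $\frac{1}{3}<\lambda<1$.

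Two points are left implicit in your write-up that a complete refutation needs. First, the decisive inequality $\alpha(S,-K_S+\lambda C)\leqslant\frac{4}{3+3\lambda}$ does not require the local lemmas at all: it is the log canonical threshold of the explicit divisor $(\frac{1}{2}+\lambda)C+\frac{1}{2}\widetilde{C}$ at the point where $C$ and $\widetilde{C}$ meet with local intersection multiplicity $3$ (three blow-ups give the condition $a+b\leqslant\frac{4}{3}$ on the coefficients, whence $\mu(1+\lambda)\leqslant\frac{4}{3}$); only this upper bound, not the full equality of Theorem~\ref{theorem:main}, is needed to kill the conjecture. Second, and more seriously, you take for granted that the tangential configuration is realized on a surface with $\alpha(S)=1$. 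A priori it could happen that $\alpha(S)=1$ (equivalently, by Theorem~\ref{theorem:Cheltsov}, the absence of cuspidal anticanonical curves) forces every pair $C$, $\widetilde{C}=\tau(C)$ to meet in at least two points, in which case the conjecture would survive your analysis unscathed. Establishing that the two conditions are compatible is the actual content of Example~\ref{example:main} (the surface $w^2=z^3+zx^2+y^6$ with $C=\{z=w-y^3=0\}$), and without exhibiting such a surface your argument shows only that the \emph{proof strategy} fails, not that the \emph{statement} does.
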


The main evidence for this conjecture is

\begin{theorem}[{\cite{HongWon}}]
\label{theorem:Hong-Won}
Let $D$ be an effective $\mathbb{Q}$-divisor on the surface $S$ such~that $D\sim_{\mathbb{Q}} A$.
Then the log pair $(S,\alpha_c(S,A)D)$ is log canonical outside of finitely many points.
\end{theorem}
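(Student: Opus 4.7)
\emph{Sketch of proof.} The key reduction is that the pair $(S,\alpha_c(S,A)D)$ is log canonical outside a finite set if and only if, for every irreducible curve $E\subset S$, $\alpha_c(S,A)\cdot\mathrm{mult}_E(D)\le 1$. Indeed, at a general point of $E$ where $\mathrm{mult}_E(D)=m$, the pair is analytically equivalent to $(\mathbb{A}^2,\alpha_c m\{x=0\})$, which is log canonical precisely when $\alpha_c m\le 1$; any violation at a general point of a curve propagates to infinitely many points. Thus the theorem reduces to establishing the uniform multiplicity bound
$$
\mathrm{mult}_E(D)\le\frac{1}{\alpha_c(S,A)}
$$
for every irreducible $E\subset S$ and every effective $\mathbb{Q}$-divisor $D\sim_{\mathbb{Q}}A$.

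To bound $\mathrm{mult}_E(D)$, I would write $D=mE+D'$ with $D'$ effective and not containing $E$. Since $D'\cdot F\ge 0$ for any irreducible $F\ne E$, intersecting with such an $F$ yields $A\cdot F=D\cdot F\ge m(E\cdot F)$, hence $m\le (A\cdot F)/(E\cdot F)$ whenever $E\cdot F>0$, and the same inequality extends by linearity to any effective class $F$ whose support avoids $E$. The plan is to choose, for each possible $E$, an auxiliary class $F$ that makes the right-hand side equal $1/\alpha_c(S,A)$: typically $F=E$ itself when $E^2\ge 1$, $F=\widetilde{E}\in|-2K_S-E|$ (furnished by the construction of Theorem~\ref{theorem:main}) when $E$ is a $(-1)$-curve, and a section of the conic bundle $|C|$ or a suitable multiple of $-K_S$ when $E^2=0$.

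After normalising $\mu=1$ so that $K_S+A$ lies on a face of $\overline{\mathbb{NE}}(S)$, with the boundary decomposition $K_S+A\sim_\mathbb{Q}\sum a_iE_i$ or $\delta C+\sum a_iE_i$ as in Section~\ref{section:conjecture}, I would split the argument into cases according to the type of $A$ (namely $\mathbb{P}^2$, $\mathbb{F}_1$, or $\mathbb{P}^1\times\mathbb{P}^1$) and the regime of $s_A$, and, within each, according to the geometric type of $E$: a boundary component $E_i$ or fibre $C$, another $(-1)$-curve, another curve with $E^2=0$, or a curve with $E^2\ge 1$. For each subcase one applies the intersection bound above with the chosen $F$ and checks numerically that the resulting bound is $\le 1/\alpha_c(S,A)$.

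The principal obstacle is the intermediate range $1<s_A\le 4$, in which $\alpha_c(S,A)$ is defined as the maximum of three expressions; to match it one must exhibit three different effective test classes $F$, one for each competitor, and balance the bounds that they produce. A delicate point arises when $E=E_i$ is itself a boundary component, because $A\cdot E_i=1-a_i$ does not directly control $m$ (as $E_i^2=-1$); the resolution is to intersect with $\widetilde{E_i}\in|-2K_S-E_i|$, which by Theorem~\ref{theorem:main} is an irreducible $(-1)$-curve with $E_i\cdot\widetilde{E_i}=3$, and when $\widetilde{E_i}$ also appears in $D$ one exploits the relation $E_i+\widetilde{E_i}\sim -2K_S$ to bound the two coefficients simultaneously. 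The remaining verifications amount to a routine but lengthy numerical check that the chosen $F$ delivers the required bound in every subcase.
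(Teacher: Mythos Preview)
The paper does not prove this theorem; it is quoted from \cite{HongWon} as the main evidence for Conjecture~\ref{conjecture:Hong-Won} and is stated without argument. There is therefore no proof in the paper to compare against.

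Your sketch is, in outline, the standard and correct approach: log canonicity in codimension one is governed entirely by the coefficients $\mathrm{mult}_E(D)$ along irreducible curves, and bounding these via intersection with well-chosen test curves is exactly how such statements are established. One imprecision: the assertion that $D'\cdot F\geqslant 0$ for any irreducible $F\ne E$ is false as written, since $F$ may lie in $\mathrm{Supp}(D')$ and have $F^2<0$ (this happens precisely for the $(-1)$-curves on $S$). You do later acknowledge this when you propose handling the pair $E_i,\widetilde{E}_i$ simultaneously via $E_i+\widetilde{E}_i\sim -2K_S$, but the earlier sentence should be weakened to ``for any irreducible $F$ not contained in $\mathrm{Supp}(D)$''. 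With that correction, the strategy you describe---case split by the type of $A$ and the value of $s_A$, and within each case by the numerical type of $E$, choosing the test curve $F$ accordingly---is a faithful outline of how the Hong--Won argument proceeds, and the ``delicate point'' you flag (the intermediate range $1<s_A\leqslant 4$ with its three competing expressions) is indeed where the bulk of the work lies.
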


As we already mentioned in Section~\ref{section:intro}, Example~\ref{example:main} shows that Conjecture~\ref{conjecture:Hong-Won} is wrong.
However, the smooth del Pezzo surface of degree one in Example~\ref{example:main} is rather special.
Therefore, Conjecture~\ref{conjecture:Hong-Won} may hold for \emph{general} smooth del Pezzo surfaces of degree one.

By \cite[Remark~1.1.3]{CheltsovParkWon3}, it follows from Conjecture~\ref{conjecture:Hong-Won} that $S$ does not contain $A$-polar cylinders (see \cite[Definition 1.2.1]{CheltsovParkWon3}) when $\alpha(S)=1$ and $a_1$ and $\delta$ are small enough.

\section{Singularities of log pairs}
\label{section:pairs}

Let $S$ be a smooth surface, and let $D$ be an effective $\mathbb{Q}$-divisor on it. Write
$$
D=\sum_{i=1}^{r}a_iC_i
$$
where each $C_i$ is an irreducible curve on $S$, and each $a_i$ is a non-negative rational number.
We assume here that all curves $C_1,\ldots,C_r$ are different.

Let $\gamma\colon\mathcal{S}\to S$ be a birational morphism such that the surface $\mathcal{S}$ is smooth as well.
It~is~well-known that the morphism $\gamma$ is a composition of $n$ blow ups of smooth points.
Thus, the morphism $\gamma$ contracts $n$ irreducible curves. Denote these curves by $\Gamma_1,\ldots, \Gamma_n$.
For each curve $C_i$, denote by $\mathcal{C}_i$ its proper transform on the surface $\mathcal{S}$.
Then
$$
K_{\mathcal{S}}+\sum_{i=1}^{r}a_i\mathcal{C}_i+\sum_{j=1}^{n}b_j\Gamma_j\sim_{\mathbb{Q}}\gamma^{*}\big(K_{S}+D\big)
$$
for some rational numbers $b_1,\ldots,b_n$. Suppose, in addition,
that the divisor
$$
\sum_{i=1}^r\mathcal{C}_i+\sum_{j=1}^n\Gamma_j
$$
has simple normal crossing singularities. Fix a point $P\in S$.

\begin{definition}
\label{definition:lct}
The log pair $(S,D)$ is \emph{log canonical} (respectively \emph{Kawamata log terminal}) at the point $P$ if
the following two conditions are satisfied:
\begin{itemize}
\item $a_i\leqslant 1$ (respectively $a_i<1$) for every $C_i$ such that $P\in C_i$,
\item $b_j\leqslant 1$ (respectively $b_j<1$) for every $\Gamma_j$ such that $\pi(\Gamma_j)=P$.
\end{itemize}
\end{definition}

This definition does not depend on the choice of the birational morphism~$\gamma$.

The log pair $(S,D)$ is said to be \emph{log canonical} (respectively \emph{Kawamata log terminal}) if~it is log
canonical (respectively, \emph{Kawamata log terminal}) at every point in $S$.

The following result follows from Definition~\ref{definition:lct}. But it is very handy.

\begin{lemma}
\label{lemma:convexity}
Suppose that the singularities of the pair $(S,D)$ are not log canonical at $P$.
Let~$D^{\prime}$~be~an effective $\mathbb{Q}$-divisor on $S$ such that $(S,D^{\prime})$ is log canonical at~$P$ and $D^{\prime}\sim_{\mathbb{Q}} D$.
Then there exists an effective $\mathbb{Q}$-divisor $D^{\prime\prime}$ on the surface $S$ such that
$$
D^{\prime\prime}\sim_{\mathbb{Q}} D,
$$
the log pair $(S,D^{\prime\prime})$ is not log canonical at $P$, and $\mathrm{Supp}(D^{\prime})\not\subseteq\mathrm{Supp}(D^{\prime\prime})$.
\end{lemma}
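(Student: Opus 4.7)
The plan is to exploit the convexity of the log canonical condition built into Definition~\ref{definition:lct}: on any log resolution $\gamma\colon \mathcal{S}\to S$, the coefficients $a_i$ and $b_j$ appearing in the log pullback of $K_S + \Delta$ depend linearly on the boundary $\Delta$, so any convex combination of two $\mathbb{Q}$-boundaries that are log canonical at $P$ is itself log canonical at $P$. The idea is to use this to produce $D''$ by deforming $D$ within its $\mathbb{Q}$-linear equivalence class in the direction $D - D'$, stopping at the first moment when some component shared with $D'$ drops out of the support.

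Concretely, I would write $D = \sum_i a_i C_i$ and $D' = \sum_i a_i' C_i$ over a common finite collection of irreducible curves on $S$ with $a_i, a_i' \geq 0$, and introduce the one-parameter family
$$
D_t := (1+t)D - tD' = \sum_i \bigl((1+t)a_i - t a_i'\bigr) C_i,
$$
so that $D_t \sim_{\mathbb{Q}} D$ for every rational $t$. The index set $I := \{ i : a_i' > a_i \}$ must be nonempty: otherwise $D - D'$ would be an effective divisor $\mathbb{Q}$-linearly equivalent to zero on the projective surface $S$, forcing $D = D'$, which contradicts the assumption that $(S,D)$ is not log canonical at $P$ while $(S, D')$ is. Hence
$$
t_{\max} := \min_{i \in I}\frac{a_i}{a_i' - a_i}
$$
is a finite non-negative rational number, and $D'' := D_{t_{\max}}$ is effective and $\mathbb{Q}$-linearly equivalent to $D$; moreover at least one component $C_{i_0}$ with $a_{i_0}' > 0$ attains coefficient zero in $D''$, yielding $C_{i_0}\in \mathrm{Supp}(D')\setminus\mathrm{Supp}(D'')$.

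The remaining step is to verify that $(S,D'')$ fails to be log canonical at $P$. Rearranging the definition of $D''$ exhibits $D$ as the convex combination
$$
D = \frac{1}{1+t_{\max}}D'' + \frac{t_{\max}}{1+t_{\max}}D'.
$$
If $(S,D'')$ were log canonical at $P$, then on a common log resolution of $D + D' + D''$, the linearity of the coefficients in Definition~\ref{definition:lct} would make all the coefficients attached to $D$ convex combinations of numbers that are at most $1$, hence at most $1$; this would force $(S,D)$ to be log canonical at $P$, contradicting the hypothesis.

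The whole argument is essentially formal, so I do not expect any real obstacle beyond the verification that the index set $I$ is nonempty; that is the only place where the hypotheses must be used in a nonlinear way, and without it one could not move $D$ at all inside its $\mathbb{Q}$-linear equivalence class while keeping it effective. Once this is settled, the projectivity of $S$ and the linearity of the discrepancy formula do the rest.
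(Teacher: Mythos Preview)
Your proof is correct and follows essentially the same approach as the paper's: both form the one-parameter family $(1+t)D - tD'$, take $t$ to be the largest value for which this remains effective, and use the convex-combination identity $D = \frac{1}{1+t}D'' + \frac{t}{1+t}D'$ together with the convexity of the log canonical condition to conclude. Your version is in fact more careful than the paper's, since you explicitly justify why such a maximal $t$ exists (the nonemptiness of $I$ via projectivity), a point the paper leaves implicit.
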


\begin{proof}
Let $\epsilon$ be the largest rational number such that $(1+\epsilon) D-\epsilon D^{\prime}$ is effective. Then
$$
(1+\epsilon) D-\epsilon D^{\prime}\sim_{\mathbb{Q}} D.
$$
Put $D^{\prime\prime}=(1+\epsilon) D-\epsilon D^{\prime}$. Then $(S,D^{\prime\prime})$ is not log canonical at $P$, because
$$
D=\frac{1}{1+\epsilon}D^{\prime\prime}+\frac{\epsilon}{1+\epsilon}D^{\prime}.
$$
Furthermore, we have $\mathrm{Supp}(D^{\prime})\not\subseteq\mathrm{Supp}(D^{\prime\prime})$ by construction.
\end{proof}

Let $f\colon \widetilde{S}\to S$ be a blow up of the point $P$. Let us denote the $f$-exceptional curve~by~$F$.
Denote by $\widetilde{D}$ the proper transform of the divisor $D$ via $f$. Put $m=\mathrm{mult}_{P}(D)$.

\begin{theorem}[{\cite[Exercise~6.18]{CoKoSm}}]
\label{theorem:Skoda} If $(S,D)$ is not log canonical at $P$, then $m>1$.
\end{theorem}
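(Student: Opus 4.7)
The plan is to prove the contrapositive: if $m=\mathrm{mult}_P(D)\leqslant 1$, then $(S,D)$ is log canonical at $P$. The idea is to run an iterated sequence of point blow-ups that produces a log resolution, and to verify that the hypothesis $m\leqslant 1$ propagates through the process in such a way that every coefficient appearing in the log pull-back stays $\leqslant 1$.

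First I would dispense with the coefficients $a_i$ of components of $D$ through $P$. Since each $C_i$ passing through $P$ satisfies $\mathrm{mult}_P(C_i)\geqslant 1$, we get
$$
a_i\leqslant a_i\cdot\mathrm{mult}_P(C_i)\leqslant\mathrm{mult}_P(D)=m\leqslant 1,
$$
taking care of the first condition in Definition~\ref{definition:lct} at $P$. Next I would compute the log pull-back under the blow-up $f\colon\widetilde{S}\to S$. From $K_{\widetilde{S}}=f^{*}K_S+F$ and $f^{*}D=\widetilde{D}+mF$ one obtains
$$
K_{\widetilde{S}}+\widetilde{D}+(m-1)F\sim_{\mathbb{Q}}f^{*}(K_S+D),
$$
so the coefficient of $F$ is $m-1\leqslant 0\leqslant 1$. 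Moreover, for any $Q\in F$ the inequality $\mathrm{mult}_Q(\widetilde{C}_i)\leqslant\mathrm{mult}_P(C_i)$ gives $\mathrm{mult}_Q(\widetilde{D})\leqslant m$, hence $\mathrm{mult}_Q(\widetilde{D}+(m-1)F)\leqslant 2m-1\leqslant 1$. Thus after one blow-up the whole new boundary again has multiplicity $\leqslant 1$ at every point of $F$, and the coefficients of its components are still $\leqslant 1$.

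Then I would iterate. Any further blow-up of a point $Q$ lying over $P$, say $g\colon\overline{S}\to\widetilde{S}$ with exceptional curve $G$, contributes to $G$ the coefficient $-1+\mathrm{mult}_Q(\text{current boundary})\leqslant 0$, and the same multiplicity-preservation argument as above shows that at every point of $G$ the new boundary still has multiplicity $\leqslant 1$. Since $D$ has finitely many components through $P$, finitely many such blow-ups produce a log resolution $\gamma\colon\mathcal{S}\to S$ (embedded resolution of curve singularities on surfaces), and by the preceding analysis every coefficient $a_i$ with $P\in C_i$ and every $b_j$ with $\gamma(\Gamma_j)=P$ is $\leqslant 1$. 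By Definition~\ref{definition:lct} the pair $(S,D)$ is log canonical at $P$.

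The main obstacle is the bookkeeping for the inductive step: one needs to verify carefully that the property ``the current boundary has multiplicity $\leqslant 1$ at every point over $P$, and the coefficients of all its components are $\leqslant 1$'' is genuinely preserved under a blow-up of any point that still carries non-normal-crossings, and that the iteration terminates in a log resolution. Both facts are standard for surfaces but need explicit verification in the form of the estimate $\mathrm{mult}_{Q}(\text{new boundary})\leqslant 1$ at every point $Q$ of the new exceptional curve, which proceeds exactly as in the base step.
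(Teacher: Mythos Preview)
The paper does not give its own proof of Theorem~\ref{theorem:Skoda}; it is stated with a citation to \cite[Exercise~6.18]{CoKoSm} and used throughout as a black box. So there is no paper proof to compare against.

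Your approach is the standard one and is essentially correct. One point deserves more care in the inductive step. The base step uses the componentwise inequality $\mathrm{mult}_Q(\widetilde{C}_i)\leqslant\mathrm{mult}_P(C_i)$, summed with the nonnegative weights $a_i$, to get $\mathrm{mult}_Q(\widetilde{D})\leqslant m$. Once you iterate, however, the ``current boundary'' $B$ carries the exceptional components with possibly negative coefficients (indeed $\leqslant 0$, as you note), and the same componentwise inequality no longer yields $\mathrm{mult}_R(\overline{B})\leqslant\mathrm{mult}_Q(B)$ directly: dropping a component with negative coefficient can \emph{increase} the multiplicity. The clean way to close the induction is to take as invariant ``every exceptional coefficient is $\leqslant 0$'' (which you already observe). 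Then at any $R$ over $P$,
\[
\mathrm{mult}_R(B)\;\leqslant\;\mathrm{mult}_R\big(\text{strict transform of }D\big)\;\leqslant\;\mathrm{mult}_P(D)=m\;\leqslant\;1,
\]
and the new exceptional coefficient is $-1+\mathrm{mult}_R(B)\leqslant m-1\leqslant 0$, preserving the invariant. This is precisely the bookkeeping you flag in your last paragraph; with this formulation the induction is airtight. Termination in a log resolution is standard for surfaces (embedded resolution of plane curve singularities by iterated point blow-ups).
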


Let $C$ be an irreducible curve in the surface $S$. Suppose that $P\in C$ and $C\not\subseteq\mathrm{Supp}(D)$.
Denote by $\widetilde{C}$ the proper transform of the curve $C$ via $f$.
Fix $a\in\mathbb{Q}$ such that $0\leqslant a\leqslant 1$.
Then $(S, aC+D)$ is not log canonical at $P$ if and only if the log pair
\begin{equation}
\label{equation:log-pair-2}
\Bigg(\widetilde{S},a\widetilde{C}+\widetilde{D}+\Big(a\mathrm{mult}_{P}\big(C\big)+m-1\Big)F\Bigg)
\end{equation}
is not log canonical at some point in $F$. This follows from Definition~\ref{definition:lct}.

\begin{theorem}[{\cite[Exercise~6.31]{CoKoSm}}]
\label{theorem:adjunction}
Suppose that $C$ is smooth at $P$, and $(D\cdot C)_{P}\leqslant 1$.
Then the log pair $(S, aC+D)$ is log canonical at $P$.
\end{theorem}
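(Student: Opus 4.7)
The plan is to reduce to the case $a = 1$ and then argue by induction on the complexity of a local embedded resolution of $C \cup \mathrm{Supp}(D)$ at $P$.

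Since $aC + D \leqslant C + D$ as effective $\mathbb{Q}$-divisors and log canonicity is monotone in the coefficients, it is enough to prove that $(S, C + D)$ is log canonical at $P$ whenever $C$ is smooth at $P$, $C \not\subseteq \mathrm{Supp}(D)$, and $(D \cdot C)_P \leqslant 1$. I then induct on the minimal number of point blow-ups needed to make $C \cup \mathrm{Supp}(D)$ a simple normal crossings divisor in a neighbourhood of $P$; this number is finite by the classical embedded resolution of plane curve singularities. In the base case the configuration is already SNC at $P$: on a surface, transversality forces at most one irreducible component $D_0 \subset \mathrm{Supp}(D)$ to pass through $P$, and if $a_0$ denotes its coefficient in $D$ then $(D \cdot C)_P = a_0 \leqslant 1$, so all coefficients of the SNC configuration are $\leqslant 1$ and the pair is log canonical at $P$ by inspection.

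For the inductive step, let $f\colon \widetilde{S} \to S$ be the blow-up of $P$ with exceptional curve $F$, and set $m = \mathrm{mult}_P(D)$. Using $\mathrm{mult}_P(C) = 1$, the pullback formula reads
$$
K_{\widetilde{S}} + \widetilde{C} + \widetilde{D} + mF \sim_{\mathbb{Q}} f^{*}(K_S + C + D).
$$
Two identities drive the argument:
$$
m \leqslant \mathrm{mult}_P(D)\cdot \mathrm{mult}_P(C) \leqslant (D \cdot C)_P \leqslant 1, \qquad (D \cdot C)_P = m + (\widetilde{D} \cdot \widetilde{C})_{\widetilde{P}},
$$
where $\widetilde{P} = \widetilde{C} \cap F$ is the unique intersection point. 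The first inequality shows that the coefficient of $F$ is at most $1$, so $F$ itself causes no trouble. To check log canonicity of $(\widetilde{S}, \widetilde{C} + \widetilde{D} + mF)$ at every point of $F$, I split into two cases. At $\widetilde{P}$ I apply the inductive hypothesis to the pair $(\widetilde{S}, \widetilde{C} + (\widetilde{D} + mF))$ with $\widetilde{C}$ in the role of the smooth curve, using
$$
\bigl((\widetilde{D} + mF) \cdot \widetilde{C}\bigr)_{\widetilde{P}} = (\widetilde{D} \cdot \widetilde{C})_{\widetilde{P}} + m = (D \cdot C)_P \leqslant 1.
$$
At a point $\widetilde{Q} \in F$ with $\widetilde{Q} \notin \widetilde{C}$, I apply the theorem with $F$ in place of $C$ and coefficient $a = m \leqslant 1$, using the estimate $(\widetilde{D} \cdot F)_{\widetilde{Q}} \leqslant \widetilde{D} \cdot F = m \leqslant 1$.

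The main obstacle is formal rather than geometric: arranging the inductive complexity measure so that it strictly decreases under the blow-up at each point of $F$ that is still considered, and hence so that the induction terminates. This is a standard bookkeeping exercise on surfaces --- for instance one can work with the sum of $\delta$-invariants of the plane curve singularities of $C \cup \mathrm{Supp}(D)$, or invoke Hironaka's embedded resolution directly --- and all the substantive content of the argument lies in the two displayed identities above.
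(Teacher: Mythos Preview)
The paper does not prove this theorem; it simply records it as \cite[Exercise~6.31]{CoKoSm} and uses it as a black box throughout Sections~\ref{section:pairs}--\ref{section:main}. So there is nothing in the paper to compare your argument against.

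Your inductive proof is essentially correct and is the natural elementary approach. Two minor points. First, at a point $\widetilde{Q}\in F\setminus\widetilde{C}$ you say you ``apply the theorem'' with $F$ in place of $C$; taken literally this is circular. What you mean is that after increasing the coefficient of $F$ from $m$ to $1$ by monotonicity, you apply the \emph{inductive hypothesis} to the pair $(\widetilde{S},F+\widetilde{D})$, using $(\widetilde{D}\cdot F)_{\widetilde{Q}}\leqslant m\leqslant 1$. Second, your hesitation about termination disappears if you set the induction up slightly differently: fix once and for all a log resolution $\gamma\colon\mathcal{S}\to S$ of $(S,C+D)$ that is an isomorphism away from $P$, and induct on the number $N$ of $\gamma$-exceptional curves. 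Since any birational morphism of smooth surfaces is a composition of point blow-ups and $\gamma$ is not an isomorphism over $P$, the first blow-up in $\gamma$ is exactly $f$; hence $\gamma$ factors through $\widetilde{S}$ and the induced map $\mathcal{S}\to\widetilde{S}$ is a log resolution of $(\widetilde{S},\widetilde{C}+\widetilde{D}+F)$ with $N-1$ exceptional curves over every point of $F$. This makes the decrease in complexity automatic and removes the need for $\delta$-invariants or any separate bookkeeping.
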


\begin{corollary}
\label{corollary:adjunction}
Suppose that the log pair \eqref{equation:log-pair-2} is not log canonical at some point in $F\setminus\widetilde{C}$.
Then either $a\mathrm{mult}_{P}(C)+m>2$ or $m>1$ (or both).
\end{corollary}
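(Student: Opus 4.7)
The plan is as follows. Write $\alpha = a\,\mathrm{mult}_{P}(C) + m - 1$ for the coefficient of $F$ in the log pair \eqref{equation:log-pair-2}. If $\alpha > 1$ then already $a\,\mathrm{mult}_P(C) + m > 2$, which is the first alternative in the conclusion, so there is nothing to prove. I may therefore assume $\alpha \leqslant 1$ and aim to derive the second alternative $m > 1$.

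Suppose for contradiction that $m \leqslant 1$, and let $Q \in F \setminus \widetilde{C}$ be a point at which \eqref{equation:log-pair-2} fails to be log canonical. The plan is to apply Theorem~\ref{theorem:adjunction} with $F$ playing the role of the smooth curve, $\widetilde{D}$ playing the role of the divisor, and $\alpha$ playing the role of the coefficient. The hypotheses are easily verified: $F$ is smooth everywhere, it is not a component of $\widetilde{D}$ because $\widetilde{D}$ is a proper transform, and the standard blow-up formula gives
$$
(\widetilde{D} \cdot F)_Q \leqslant \widetilde{D} \cdot F = m \leqslant 1.
$$
Since $Q \notin \widetilde{C}$, the summand $a\widetilde{C}$ is locally invisible at $Q$, so Theorem~\ref{theorem:adjunction} would conclude that $(\widetilde{S}, \alpha F + \widetilde{D})$, and hence \eqref{equation:log-pair-2}, is log canonical at $Q$, contradicting the choice of $Q$.

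The only formal technicality is the side condition $\alpha \geqslant 0$ implicit in the statement of Theorem~\ref{theorem:adjunction}. This comes for free from the hypothesis of the corollary: by the equivalence recorded just before the corollary, the failure of \eqref{equation:log-pair-2} to be log canonical at some point of $F$ is equivalent to $(S, aC+D)$ being not log canonical at $P$; Theorem~\ref{theorem:Skoda} then forces $\mathrm{mult}_P(aC+D) = a\,\mathrm{mult}_P(C) + m > 1$, so $\alpha > 0$. (If for some reason one wishes to avoid invoking Theorem~\ref{theorem:Skoda} at this stage, a negative $\alpha$ can be handled by monotonicity of log canonicity under subtraction of effective divisors, reducing to the case $\alpha = 0$.) I anticipate no genuine obstacle, since the whole argument is essentially one direct application of Theorem~\ref{theorem:adjunction} to the blown-up pair.
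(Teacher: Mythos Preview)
Your proof is correct and is exactly the argument the paper intends: the corollary is stated without proof as an immediate consequence of Theorem~\ref{theorem:adjunction}, and the intended deduction is precisely to apply that theorem on $\widetilde{S}$ with $F$ in the role of the smooth curve, $\widetilde{D}$ in the role of the divisor, and $\alpha=a\,\mathrm{mult}_P(C)+m-1$ in the role of the coefficient, using $\widetilde{D}\cdot F=m$. Your handling of the side condition $0\leqslant\alpha\leqslant 1$ via Theorem~\ref{theorem:Skoda} (or monotonicity) is appropriate and not something the paper spells out.
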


Let us give another application of Theorem~\ref{theorem:adjunction}.

\begin{lemma}
\label{lemma:dP2}
Suppose that there is a double cover $\pi\colon S\to\mathbb{P}^2$ branched in a curve $R\subset\mathbb{P}^2$.
Suppose also that $(S,D)$ is not log canonical at $P$, and $D\sim_{\mathbb{Q}}\pi^*(\mathcal{O}_{\mathbb{P}^2}(1))$.
Then $\pi(P)\in R$.
\end{lemma}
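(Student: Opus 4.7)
Suppose for contradiction that $\pi(P)\notin R$, and let $\sigma$ be the Galois involution of the double cover $\pi$. Since $\pi$ is \'etale at $P$, the point $P':=\sigma(P)$ is distinct from $P$, and $\pi^{-1}(\pi(P))=\{P,P'\}$. The strategy is to symmetrize $D$ using $\sigma$, invoke a sharp $\alpha$-invariant fact on $\mathbb{P}^2$ to produce a log canonical comparison divisor in the same class as $D$, and then apply Lemma~\ref{lemma:convexity} iteratively.

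Set $\bar D:=\frac{1}{2}(D+\sigma^*D)$. Then $\bar D\sim_\mathbb{Q} D$, and $\bar D$ is $\sigma$-invariant, hence equals $\pi^*B$ where $B:=\frac{1}{2}\pi_*D$ is an effective $\mathbb{Q}$-divisor on $\mathbb{P}^2$ of class $\mathcal{O}_{\mathbb{P}^2}(1)$. The crucial fact is $\alpha(\mathbb{P}^2,\mathcal{O}_{\mathbb{P}^2}(1))=1$, which follows from $\alpha(\mathbb{P}^2)=1/3$ (Theorem~\ref{theorem:Cheltsov}) by rescaling along $-K_{\mathbb{P}^2}=3\mathcal{O}_{\mathbb{P}^2}(1)$. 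Hence $(\mathbb{P}^2,B)$ is log canonical everywhere; since $\pi$ is \'etale at $P$, this log-canonicity pulls back, and $(S,\bar D)$ is log canonical at $P$.

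Apply Lemma~\ref{lemma:convexity} with $D'=\bar D$: the hypotheses ($\bar D\sim_\mathbb{Q} D$, $(S,\bar D)$ log canonical at $P$, $(S,D)$ not log canonical at $P$) yield an effective $\mathbb{Q}$-divisor $D_1\sim_\mathbb{Q} D$ with $(S,D_1)$ not log canonical at $P$ and $\mathrm{Supp}(\bar D)\not\subseteq\mathrm{Supp}(D_1)$. Since $\mathrm{Supp}(\bar D)=\mathrm{Supp}(D)\cup\sigma(\mathrm{Supp}(D))$, this strips at least one irreducible component from the symmetrized support. The same hypotheses apply to $D_1$, so we iterate, obtaining divisors $D_0=D,D_1,\ldots$ whose symmetrized supports strictly shrink; the process must terminate in finitely many steps at a divisor $D_n=aC$ supported on a single irreducible curve $C$.

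The contradiction in the one-component case follows from a direct numerical analysis: writing $a=1/n$ with $n\in\mathbb{Z}_{>0}$, the class of $C$ is determined to be $n\pi^*\mathcal{O}_{\mathbb{P}^2}(1)$, so $C^2=2n^2$ and the arithmetic genus is computed by adjunction. The image $\pi(C)\subset\mathbb{P}^2$ is a plane curve of degree $n$ or $2n$ (according to whether $C$ is $\sigma$-invariant), and a pullback-by-$\pi$ analysis combined with the arithmetic-genus/geometric-genus comparison forces all singularities of $C$ to lie above $R$. Hence $(S,aC)$ is log canonical away from $\pi^{-1}(R)$, contradicting non-log-canonicity at $P\notin\pi^{-1}(R)$. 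The hardest step is making the iteration rigorous when $\mathrm{Supp}(D)$ is not already $\sigma$-invariant: in that case the first application of Lemma~\ref{lemma:convexity} with $D'=\bar D$ can be trivial (producing $D_1=D$), and one needs an auxiliary reduction---for example, first replacing $D$ by a divisor with $\sigma$-invariant support, such as a rescaled $D+\sigma^*D$---to ensure genuine progress.
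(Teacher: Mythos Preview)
Your central idea---symmetrize via the Galois involution $\sigma$, use $\alpha(\mathbb{P}^2,\mathcal{O}(1))=1$ to conclude that $(S,\bar D)$ is log canonical at $P$, then iterate Lemma~\ref{lemma:convexity}---is attractive, but the iteration is broken and you have correctly identified the breaking point without actually repairing it. When $\mathrm{Supp}(D)$ is not $\sigma$-invariant, $\bar D=\tfrac12(D+\sigma^*D)$ has at least one component not in $\mathrm{Supp}(D)$, so the maximal $\epsilon$ in Lemma~\ref{lemma:convexity} is $0$ and the output is $D_1=D$: no progress. Your proposed fix, ``first replace $D$ by a divisor with $\sigma$-invariant support such as a rescaled $D+\sigma^*D$'', is circular: $D+\sigma^*D=2\bar D$ is log canonical at $P$, so it cannot serve as the non-log-canonical divisor you need. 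Even if you somehow arranged $\sigma$-invariant support, after one application of Lemma~\ref{lemma:convexity} the new $D_1$ typically loses $\sigma$-invariance (you kill one component but not its $\sigma$-partner), and you are back to the same obstruction. So the descent to a single component never gets off the ground.

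The terminal step is also not justified. For a non-$\sigma$-invariant irreducible $C\in|\pi^*\mathcal{O}(n)|$, the map $\pi|_C$ is birational onto a plane curve of degree $2n$, and there is no reason the singularities of $C$ must sit over $R$; the arithmetic-genus comparison you allude to does not pin down the \emph{location} of singular points. So even the base case of your induction is open.

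The paper's proof avoids all of this by a direct local argument: blow up $P$, locate a point $Q$ on the exceptional curve where the pair is still not log canonical, and then use the hypothesis $\pi(P)\notin R$ to choose a curve $Z\in|\pi^*\mathcal{O}(1)|$ through $P$ whose proper transform passes through $Q$ (this is exactly where the pencil of lines through $\pi(P)$, which lifts to a pencil through $P$ covering all tangent directions, is used). A short intersection computation with Theorem~\ref{theorem:adjunction} then yields the contradiction. Your symmetrization observation $(S,\bar D)$ log canonical at $P$ is correct and pleasant, but it does not by itself force a contradiction, and Lemma~\ref{lemma:convexity} is too coarse a tool to extract one from it.
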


\begin{proof}
The log pair $(\widetilde{S},\widetilde{D}+(m-1)F)$ is not log canonical at some point $Q\in F$.
Then
\begin{equation}
\label{equation:22}
m+\mathrm{mult}_{Q}\big(\widetilde{D}\big)>2
\end{equation}
by Theorem~\ref{theorem:Skoda}.
Suppose that $\pi(P)\not\in R$. Then there is $Z\in|\pi^*(\mathcal{O}_{\mathbb{P}^2}(1))|$ such that
\begin{itemize}
\item the curve $Z$ passes through the point $P$,
\item  the proper transform of the curve $Z$ on the surface $\widetilde{S}$ contains $Q$.
\end{itemize}
Denote by $\widetilde{Z}$ the proper transform of the curve $Z$ on the surface $\widetilde{S}$.

By Lemma~\ref{lemma:convexity}, we may assume that
the support of the $\mathbb{Q}$-divisor $D$ does not contain at least one irreducible component of the curve $Z$,
because $(S,Z)$ is log canonical at $P$.
Thus, if $Z$ is irreducible, then
$2-m=\widetilde{Z}\cdot\widetilde{D}\geqslant\mathrm{mult}_{Q}(\widetilde{D})$,
which contradicts \eqref{equation:22}.

We see that $Z=Z_1+Z_2$, where $Z_1$ and $Z_2$ are irreducible smooth rational curves.
We~may assume that $Z_2\not\subseteq\mathrm{Supp}(D)$.
If~$P\in Z_2$, then $1=D\cdot Z_2\geqslant m>1$ by Theorem~\ref{theorem:Skoda}.
This shows that $P\in Z_1$ and $Z_1\subseteq\mathrm{Supp}(D)$. .

Let $d$ be the degree of the curve $R$. Then $Z_1^2=Z_2^2=\frac{2-d}{2}$ and $Z_1\cdot Z_2=\frac{d}{2}$.

We may assume that $C_1=Z_1$. Put $\Delta=a_2C_2+\cdots+a_rC_r$. Then $a_1\leqslant\frac{2}{d}$, since
$$
1=Z_2\cdot D=Z_2\cdot\Big(a_1C_1+\Delta\Big)=a_1 Z_2\cdot C_1+Z_2\cdot\Delta\geqslant a_1 Z_2\cdot C_1=\frac{a_1d}{2}.
$$

Denote by $\widetilde{C}_1$ the proper transform of the curve $C_1$ on the surface $\widetilde{S}$. Then $Q\in\widetilde{C}_1$.
Denote by $\widetilde{\Delta}$ the proper transform of the $\mathbb{Q}$-divisor $\Delta$ on the surface $\widetilde{S}$.
The log pair
$$
\Bigg(\widetilde{S}, a_1\widetilde{C}_1+\widetilde{\Delta}+\Big(a_1+\mathrm{mult}_P\big(\Delta\big)-1\Big)F\Bigg)
$$
is not log canonical at the point $Q$ by construction. By Theorem~\ref{theorem:adjunction}, we have
$$
1+\frac{d-2}{2}a_1-\mathrm{mult}_P\big(\Delta\big)=\widetilde{C}_1\cdot\widetilde{\Delta}\geqslant\big(\widetilde{C}_1\cdot\widetilde{\Delta}\big)_Q>1-\Big(a_1+\mathrm{mult}_P\big(\Delta\big)-1\Big),
$$
so that $a_1>\frac{2}{d}$. But we already proved that  $a_1\leqslant\frac{2}{d}$.
\end{proof}

Fix a point $Q\in F$. Put $\widetilde{m}=\mathrm{mult}_{Q}(\widetilde{D})$. Let $g\colon \widehat{S}\to\widetilde{S}$ be a blow up of the point~$Q$.
Denote~by~$\widehat{C}$ and $\widehat{F}$ the proper transforms of the curves $\widetilde{C}$ and $F$ via $g$, respectively.
Similarly, let us denote by $\widehat{D}$ the proper transform of the $\mathbb{Q}$-divisor $D$ on the surface $\widehat{S}$.
Denote by $G$ the $g$-exceptional curve. If the log pair \eqref{equation:log-pair-2} is not log canonical at $Q$, then
\begin{equation}
\label{equation:log-pair-3}
\Bigg(\widehat{S},a\widehat{C}+\widehat{D}+\Big(a\mathrm{mult}_{P}\big(C\big)+m-1\Big)\widehat{F}+\Big(a\mathrm{mult}_{P}\big(C\big)+a\mathrm{mult}_{Q}\big(\widetilde{C}\big)+m+\widetilde{m}-2\Big)G\Bigg)
\end{equation}
is not log canonical at some point in $G$.

\begin{lemma}
\label{lemma:adjunction-2}
Suppose $m\leqslant 1$, $a\mathrm{mult}_{P}(C)+m\leqslant 2$ and $a\mathrm{mult}_{P}\big(C\big)+a\mathrm{mult}_{Q}(\widetilde{C})+2m\leqslant 3$.
Then \eqref{equation:log-pair-3} is log canonical at every point in $G\setminus\widehat{C}$.
\end{lemma}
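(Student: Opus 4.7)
The plan is to apply Theorem~\ref{theorem:adjunction} at each point $O\in G\setminus\widehat{C}$, choosing as the distinguished curve either $G$ or $\widehat{F}$ depending on whether $O$ lies on $\widehat{F}$.

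Two numerical identities drive the argument. Since $\widetilde{D}=f^{*}D-mF$ and $F^{2}=-1$, one has $\widetilde{D}\cdot F=m$, and, because $F\not\subseteq\mathrm{Supp}(\widetilde{D})$, this forces $\widetilde{m}\leqslant m\leqslant 1$. Pulling back under $g$ then gives
$$
\widehat{D}\cdot G=\widetilde{m}\qquad\text{and}\qquad\widehat{D}\cdot\widehat{F}=m-\widetilde{m}.
$$
Together with the hypotheses $a\,\mathrm{mult}_{P}(C)+m\leqslant 2$ and $a\,\mathrm{mult}_{P}(C)+a\,\mathrm{mult}_{Q}(\widetilde{C})+2m\leqslant 3$ (the latter applied via $\widetilde{m}\leqslant m$), these identities force the coefficients of $\widehat{F}$ and of $G$ in \eqref{equation:log-pair-3} to be at most $1$, so only the intersection-number condition in Theorem~\ref{theorem:adjunction} remains to be verified.

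If $O\notin\widehat{F}$, only $G$ and $\widehat{D}$ meet $O$, and
$$
(\widehat{D}\cdot G)_{O}\leqslant\widehat{D}\cdot G=\widetilde{m}\leqslant 1,
$$
so Theorem~\ref{theorem:adjunction} applied with the curve $G$ settles this case.

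The essential case, and the step I expect to be the main obstacle, is $O=G\cap\widehat{F}$. Here using $G$ as the distinguished curve would require the bound $a\,\mathrm{mult}_{P}(C)+2m\leqslant 2$, which is not among the hypotheses. The fix is to take $\widehat{F}$ instead: writing $c_{G}=a\,\mathrm{mult}_{P}(C)+a\,\mathrm{mult}_{Q}(\widetilde{C})+m+\widetilde{m}-2$ for the coefficient of $G$, the identity $\widehat{D}\cdot\widehat{F}=m-\widetilde{m}$ absorbs $\widetilde{m}$ into $c_{G}$ and yields
$$
\bigl((c_{G}G+\widehat{D})\cdot\widehat{F}\bigr)_{O}\leqslant c_{G}+(m-\widetilde{m})=a\,\mathrm{mult}_{P}(C)+a\,\mathrm{mult}_{Q}(\widetilde{C})+2m-2\leqslant 1,
$$
again by the third hypothesis. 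Theorem~\ref{theorem:adjunction} applied with the curve $\widehat{F}$ then finishes the proof. If either coefficient $c_{G}$ or the coefficient of $\widehat{F}$ happens to be negative, one simply replaces it by $0$ in the log pair, which only makes the singularities worse, and runs the same bound.
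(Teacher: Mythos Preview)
Your proof is correct and follows essentially the same route as the paper's: the same case split on whether $O\in\widehat{F}$, the same intersection identities $\widehat{D}\cdot G=\widetilde{m}$ and $\widehat{D}\cdot\widehat{F}=m-\widetilde{m}$, and the same application of Theorem~\ref{theorem:adjunction} with distinguished curve $G$ (resp.\ $\widehat{F}$) in the two cases. The only cosmetic differences are that the paper argues by contradiction while you verify the hypothesis of Theorem~\ref{theorem:adjunction} directly, and that you make explicit the (harmless) handling of possibly negative coefficients, which the paper leaves implicit.
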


\begin{proof}
Suppose that \eqref{equation:log-pair-3} is not log canonical at some point $O\in G$ such that $O\not\in\widehat{C}$.
If~$O\not\in\widehat{F}$, then $1\geqslant m\geqslant\widetilde{m}=\widehat{D}\cdot G\geqslant(\widehat{D}\cdot G)_O>1$ by Theorem~\ref{theorem:adjunction}.
Then $O\in\widehat{F}$. Then
$$
m-\widetilde{m}=\big(\widehat{D}\cdot\widehat{F}\big)_O>1-\Big(a\mathrm{mult}_{P}\big(C\big)+a\mathrm{mult}_{Q}\big(\widetilde{C}\big)+m+\widetilde{m}-2\Big)
$$
by Theorem~\ref{theorem:adjunction}. This is impossible, since $a\mathrm{mult}_{P}(C)+a\mathrm{mult}_{Q}(\widetilde{C})+2m\leqslant 3$.
\end{proof}

Fix a point $O\in G$. Put $\widehat{m}=\mathrm{mult}_{O}(\widehat{D})$.
Let $h\colon\overline{S}\to\widehat{S}$ be a blow up of the point $O$.
Denote~by~$\overline{C}$, $\overline{F}$, $\overline{G}$ the proper transforms of the curves $\widehat{C}$, $\widehat{F}$ and $G$ via $h$, respectively.
Similarly, let us denote by $\overline{D}$ the proper transform of the $\mathbb{Q}$-divisor $D$ on the surface $\overline{S}$.
Let $H$ be the $h$-exceptional curve. If $O=G\cap\widehat{F}$ and \eqref{equation:log-pair-3} is not log canonical at $O$,~then
\begin{multline}
\label{equation:log-pair-4}
\Bigg(\overline{S}, a\overline{C}+\overline{D}+\Big(2a\mathrm{mult}_{P}\big(C\big)+a\mathrm{mult}_{Q}\big(\widetilde{C}\big)+a\mathrm{mult}_{O}\big(\widehat{C}\big)+2m+\widetilde{m}+\widehat{m}-4\Big)H+\\
+\Big(a\mathrm{mult}_{P}\big(C\big)+m-1\Big)\overline{F}+\Big(a\mathrm{mult}_{P}\big(C\big)+a\mathrm{mult}_{Q}\big(\widetilde{C}\big)+m+\widetilde{m}-2\Big)\overline{G}\Bigg)
\end{multline}
is not log canonical at some point in $H$.

\begin{lemma}
\label{lemma:adjunction-4}
Suppose that $O=G\cap\widehat{F}$, $m\leqslant 1$, $a\mathrm{mult}_{P}(C)+a\mathrm{mult}_{Q}(\widetilde{C})+m+\widetilde{m}\leqslant 3$~and
$$
2a\mathrm{mult}_{P}\big(C\big)+a\mathrm{mult}_{Q}\big(\widetilde{C}\big)+a\mathrm{mult}_{O}\big(\widehat{C}\big)+4m\leqslant 5.
$$
Then the log pair \eqref{equation:log-pair-4} is log canonical at every point in $H\setminus\overline{C}$.
\end{lemma}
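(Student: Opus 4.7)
The plan is to apply Theorem~\ref{theorem:adjunction} case by case, according to where a point $T \in H \setminus \overline{C}$ lies. Since $\widehat{F}$ and $G$ cross transversally at $O$, their proper transforms $\overline{F}$ and $\overline{G}$ on $\overline{S}$ are disjoint, and each meets $H$ transversally at one point; call these $T_F$ and $T_G$ respectively. So every such $T$ lies in exactly one of three loci: $H \setminus (\overline{F} \cup \overline{G} \cup \overline{C})$, or the single point $T_G$, or the single point $T_F$.

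First I would bound the coefficients of $H$, $\overline{G}$, $\overline{F}$ appearing in \eqref{equation:log-pair-4}. The standard estimates $\widetilde{m} \leqslant \widetilde{D} \cdot F = m$ and $\widehat{m} \leqslant \widehat{D} \cdot G = \widetilde{m}$, combined with the hypothesis $2a\mathrm{mult}_P(C) + a\mathrm{mult}_Q(\widetilde{C}) + a\mathrm{mult}_O(\widehat{C}) + 4m \leqslant 5$, show that the coefficient of $H$ is at most $1 - 2m + \widetilde{m} + \widehat{m} \leqslant 1$. The hypothesis $a\mathrm{mult}_P(C) + a\mathrm{mult}_Q(\widetilde{C}) + m + \widetilde{m} \leqslant 3$ is precisely that the coefficient of $\overline{G}$ is $\leqslant 1$, and, in analogy with Lemma~\ref{lemma:adjunction-2}, I treat the bound $a\mathrm{mult}_P(C) + m \leqslant 2$ on the coefficient of $\overline{F}$ as a standing assumption on the blowup sequence. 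Next I would compute the three relevant intersection numbers on $\overline{S}$ using the pullback formulae $f^*D = \widetilde{D} + mF$, $g^*\widetilde{D} = \widehat{D} + \widetilde{m}G$, $h^*\widehat{D} = \overline{D} + \widehat{m}H$, together with $g^*F = \widehat{F} + G$, $h^*\widehat{F} = \overline{F} + H$, $h^*G = \overline{G} + H$; a short calculation yields
$$
\overline{D} \cdot H = \widehat{m},\quad \overline{D} \cdot \overline{G} = \widetilde{m} - \widehat{m},\quad \overline{D} \cdot \overline{F} = m - \widetilde{m} - \widehat{m}.
$$

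With these in hand each case reduces to a single application of Theorem~\ref{theorem:adjunction}. For $T \in H \setminus (\overline{F} \cup \overline{G} \cup \overline{C})$, take $H$ as the distinguished smooth curve; then $(\overline{D} \cdot H)_T \leqslant \widehat{m} \leqslant m \leqslant 1$, and log canonicity follows. For $T = T_G$, take $\overline{G}$ as the distinguished curve: the remaining boundary meets $\overline{G}$ at $T_G$ only through $H$ (transversally) and through $\overline{D}$, so its local intersection with $\overline{G}$ at $T_G$ is at most
$$
(1 - 2m + \widetilde{m} + \widehat{m}) + (\widetilde{m} - \widehat{m}) = 1 + 2(\widetilde{m} - m) \leqslant 1.
$$
For $T = T_F$, the symmetric computation with $\overline{F}$ gives local intersection bounded by $(1 - 2m + \widetilde{m} + \widehat{m}) + (m - \widetilde{m} - \widehat{m}) = 1 - m \leqslant 1$.

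The main obstacle is the intersection-number bookkeeping through the three-blowup chain, in particular correctly propagating $\overline{F} = h^*\widehat{F} - H$ and $\widehat{F} = g^*F - G$ to obtain $\overline{D} \cdot \overline{F} = m - \widetilde{m} - \widehat{m}$. Once the intersection numbers are recorded, the two numerical hypotheses --- which are exactly the estimates for the coefficients of $H$ and $\overline{G}$ after replacing $\widetilde{m}$ and $\widehat{m}$ by their worst-case value $m$ --- make every local intersection fall below $1$ cleanly.
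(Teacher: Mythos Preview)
Your proof is correct and follows essentially the same approach as the paper's: both split into the three loci on $H\setminus\overline{C}$ and apply Theorem~\ref{theorem:adjunction} to the appropriate exceptional curve, the only difference being that you verify the intersection bounds directly while the paper argues by contradiction. Your observation that the bound $a\,\mathrm{mult}_P(C)+m\leqslant 2$ on the coefficient of $\overline{F}$ is needed but not listed among the hypotheses is apt; the paper's own argument in the case $E\in\overline{F}$ uses it implicitly, and it is indeed satisfied in the only application (Lemma~\ref{lemma:local-2}).
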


\begin{proof}
Suppose that the pair \eqref{equation:log-pair-4} is not log canonical at some $E\in H$ such that $E\not\in\overline{C}$.
If~$E\not\in\overline{F}\cup\overline{G}$, then
$m\geqslant\widehat{m}=\overline{D}\cdot H\geqslant(\overline{D}\cdot H)_E>1$ by Theorem~\ref{theorem:adjunction}.
Then $E\in\overline{F}\cup\overline{G}$.

If $E\in\overline{G}$, then $E\not\in\overline{F}$, so that Theorem~\ref{theorem:adjunction}  gives
$$
\widetilde{m}-\widehat{m}=\big(\overline{D}\cdot\overline{F}\big)_E>1-\Big(2a\mathrm{mult}_{P}\big(C\big)+a\mathrm{mult}_{Q}\big(\widetilde{C}\big)+a\mathrm{mult}_{O}\big(\widehat{C}\big)+2m+\widetilde{m}+\widehat{m}-4\Big),
$$
which is impossible, since $2a\mathrm{mult}_{P}(C)+a\mathrm{mult}_{Q}(\widetilde{C})+a\mathrm{mult}_{O}(\widehat{C})+4m\leqslant 5$ by assumption.
Similarly, if $E\in\overline{F}$, then $E\not\in\overline{G}$, so that Theorem~\ref{theorem:adjunction}  gives
$$
m-\widetilde{m}-\widehat{m}=\big(\overline{D}\cdot\overline{F}\big)_E>1-\Big(2a\mathrm{mult}_{P}\big(C\big)+a\mathrm{mult}_{Q}\big(\widetilde{C}\big)+a\mathrm{mult}_{O}\big(\widehat{C}\big)+2m+\widetilde{m}+\widehat{m}-4\Big),
$$
which is impossible, since  $2a\mathrm{mult}_{P}(C)+a\mathrm{mult}_{Q}(\widetilde{C})+a\mathrm{mult}_{O}(\widehat{C})+4m\leqslant 5$ .
\end{proof}

Let $Z$ be an irreducible curve in $S$ such that $P\in Z$. Suppose also that $Z\not\subseteq\mathrm{Supp}(D)$.
Denote its proper transforms on the surfaces $\widetilde{S}$ and $\widehat{S}$ by the symbols $\widetilde{Z}$ and $\widehat{Z}$, respectively.
Fix $b\in\mathbb{Q}$ such that $0\leqslant b\leqslant 1$. If $(S, aC+bZ+D)$ is not log canonical at $P$, then
\begin{equation}
\label{equation:log-pair-5}
\Bigg(\widetilde{S},a\widetilde{C}+b\widetilde{Z}+\widetilde{D}+\Big(a\mathrm{mult}_{P}\big(C\big)+b\mathrm{mult}_{P}\big(Z\big)+m-1\Big)F\Bigg)
\end{equation}
is not log canonical at some point in $F$.

\begin{lemma}
\label{lemma:adjunction-7}
Suppose that $m\leqslant 1$ and
$$
a\mathrm{mult}_{P}\big(C\big)+b\mathrm{mult}_{P}\big(Z\big)+m\leqslant 2.
$$
Then \eqref{equation:log-pair-5} is log canonical at every point in $Q\in F\setminus(\widetilde{C}\cup\widetilde{Z})$.
\end{lemma}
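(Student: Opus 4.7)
The plan is to derive the statement as a direct application of Theorem~\ref{theorem:adjunction} with $F$ playing the role of the smooth ``test'' curve. First I would observe that at a point $Q \in F \setminus (\widetilde{C} \cup \widetilde{Z})$, neither $\widetilde{C}$ nor $\widetilde{Z}$ passes through $Q$, so only $F$ itself (with coefficient $c := a\mathrm{mult}_P(C) + b\mathrm{mult}_P(Z) + m - 1$) and possibly some components of $\widetilde{D}$ contribute to the local log pair at $Q$. Thus it suffices to check log canonicity of
$$
\bigl(\widetilde{S},\, cF + \widetilde{D}\bigr)
$$
at $Q$.

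Next I would verify the two hypotheses of Theorem~\ref{theorem:adjunction} for this pair. The curve $F$ is smooth (it is an exceptional $\mathbb{P}^1$), and it is not contained in $\mathrm{Supp}(\widetilde{D})$ by construction (since $\widetilde{D}$ is the proper transform of $D$). The coefficient $c$ satisfies $c \leqslant 1$ directly from the standing hypothesis $a\mathrm{mult}_P(C) + b\mathrm{mult}_P(Z) + m \leqslant 2$. For the intersection condition, I would use
$$
(\widetilde{D} \cdot F)_Q \;\leqslant\; \widetilde{D} \cdot F \;=\; \mathrm{mult}_P(D) \;=\; m \;\leqslant\; 1,
$$
where the middle equality is the standard identity for the intersection of the exceptional divisor with the proper transform of an effective divisor under the blow up of a smooth point. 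Theorem~\ref{theorem:adjunction} then yields log canonicity of $(\widetilde{S}, cF + \widetilde{D})$ at $Q$, which is exactly the required statement.

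The only minor wrinkle is that $c$ could in principle be negative (when $a\mathrm{mult}_P(C) + b\mathrm{mult}_P(Z) + m < 1$); in that case I would simply omit $F$ from the boundary, since dropping a divisor with non-positive coefficient that does not lie in $\mathrm{Supp}(\widetilde{D})$ near $Q$ cannot destroy log canonicity, and then apply Theorem~\ref{theorem:adjunction} to the pair $(\widetilde{S}, \widetilde{D})$. Beyond this bookkeeping there is no substantive obstacle: the lemma is a straightforward one-blow-up analogue of Lemma~\ref{lemma:adjunction-2} and Lemma~\ref{lemma:adjunction-4}, both of which are proved by the same template but require tracking additional multiplicities on the second and third blow ups.
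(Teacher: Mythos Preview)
Your proposal is correct and is essentially the paper's own proof: the paper argues by contradiction, assuming \eqref{equation:log-pair-5} is not log canonical at some $Q\in F\setminus(\widetilde{C}\cup\widetilde{Z})$ and invoking Theorem~\ref{theorem:adjunction} (with $F$ as the smooth curve) to obtain $m=\widetilde{D}\cdot F\geqslant(\widetilde{D}\cdot F)_Q>1$, contradicting $m\leqslant 1$. Your direct verification of the hypotheses of Theorem~\ref{theorem:adjunction} is the contrapositive of this, and your extra remark on the case $c<0$ is harmless bookkeeping that the paper simply omits.
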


\begin{proof}
Suppose that \eqref{equation:log-pair-5} is not log canonical at some point $Q\in F$ such that $Q\not\in\widetilde{C}\cup\widetilde{Z}$.
Then $m=\widetilde{D}\cdot F\geqslant(\widetilde{D}\cdot F)_Q>1$ by Theorem~\ref{theorem:adjunction}. But $m\leqslant 1$ by assumption.
\end{proof}

If the log pair \eqref{equation:log-pair-5} is not log canonical at $Q$, then the log pair
\begin{multline}
\label{equation:log-pair-6}
\Bigg(\widehat{S},a\widehat{C}+b\widehat{Z}+\widehat{D}+\Big(a\mathrm{mult}_{P}\big(C\big)+b\mathrm{mult}_{P}\big(Z\big)+m-1\Big)F+\\
+\Big(a\mathrm{mult}_{P}\big(C\big)+a\mathrm{mult}_{Q}\big(\widetilde{C}\big)+b\mathrm{mult}_{P}\big(Z\big)+b\mathrm{mult}_{Q}\big(\widetilde{Z}\big)+m+\widetilde{m}-2\Big)G\Bigg)
\end{multline}
is not log canonical at some point in $G$.

\begin{lemma}
\label{lemma:adjunction-8}
Suppose that $m\leqslant 1$, $a\mathrm{mult}_{P}(C)+b\mathrm{mult}_{P}(Z)+m\leqslant 2$ and
$$
a\mathrm{mult}_{P}\big(C\big)+a\mathrm{mult}_{Q}\big(\widetilde{C}\big)+b\mathrm{mult}_{P}\big(Z\big)+b\mathrm{mult}_{Q}\big(\widetilde{Z}\big)+2m\leqslant 3.
$$
Then the log pair \eqref{equation:log-pair-6} is log canonical at every point in $G\setminus(\widehat{C}\cup\widehat{Z})$.
\end{lemma}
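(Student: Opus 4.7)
The plan is to follow exactly the template of the proof of Lemma~\ref{lemma:adjunction-2}, merely carrying the additional multiplicity terms contributed by the curve $Z$ through every estimate. I would argue by contradiction, supposing that \eqref{equation:log-pair-6} fails to be log canonical at some $E\in G$ with $E\notin\widehat{C}\cup\widehat{Z}$, and split into two cases according to whether $E$ lies on $\widehat{F}$.

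In the first case $E\notin\widehat{F}$, the only boundary components meeting a neighbourhood of $E$ are $\widehat{D}$ and $G$, so Theorem~\ref{theorem:adjunction} applies with $G$ as the smooth curve. A short preliminary check (combining $\widetilde{m}\leqslant m$ with the assumption $a\,\mathrm{mult}_P(C)+a\,\mathrm{mult}_Q(\widetilde{C})+b\,\mathrm{mult}_P(Z)+b\,\mathrm{mult}_Q(\widetilde{Z})+2m\leqslant 3$) shows that the coefficient of $G$ in \eqref{equation:log-pair-6} is at most $1$, and hence the theorem forces $(\widehat{D}\cdot G)_E>1$. On the other hand
$$
(\widehat{D}\cdot G)_E\leqslant \widehat{D}\cdot G=\widetilde{m}\leqslant m\leqslant 1,
$$
which is a contradiction.

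In the second case $E\in\widehat{F}$, transversality of $\widehat{F}$ and $G$ (which meet only at the point lying over $Q$) forces $E=\widehat{F}\cap G$. The coefficient of $\widehat{F}$ in \eqref{equation:log-pair-6} equals $a\,\mathrm{mult}_P(C)+b\,\mathrm{mult}_P(Z)+m-1$, which is at most $1$ by the hypothesis $a\,\mathrm{mult}_P(C)+b\,\mathrm{mult}_P(Z)+m\leqslant 2$. Applying Theorem~\ref{theorem:adjunction} to $\widehat{F}$, and using $\widehat{D}\cdot\widehat{F}=m-\widetilde{m}$ and $(G\cdot\widehat{F})_E=1$, I would deduce
$$
m-\widetilde{m}+\Bigl(a\,\mathrm{mult}_P(C)+a\,\mathrm{mult}_Q(\widetilde{C})+b\,\mathrm{mult}_P(Z)+b\,\mathrm{mult}_Q(\widetilde{Z})+m+\widetilde{m}-2\Bigr)>1,
$$
which rearranges to $a\,\mathrm{mult}_P(C)+a\,\mathrm{mult}_Q(\widetilde{C})+b\,\mathrm{mult}_P(Z)+b\,\mathrm{mult}_Q(\widetilde{Z})+2m>3$, contradicting the third hypothesis.

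The argument is entirely routine given Lemma~\ref{lemma:adjunction-2}; the only mild bookkeeping obstacle is verifying at each application of Theorem~\ref{theorem:adjunction} that the coefficient of the smooth curve being used really is at most $1$, and recalling the blow-up identities $\widehat{D}\cdot G=\widetilde{m}$ and $\widehat{D}\cdot\widehat{F}=m-\widetilde{m}$.
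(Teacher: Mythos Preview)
Your proof is correct and follows essentially the same approach as the paper's own argument: assume the pair is not log canonical at some $O\in G\setminus(\widehat{C}\cup\widehat{Z})$, rule out $O\notin\widehat{F}$ via $(\widehat{D}\cdot G)_O\leqslant\widetilde{m}\leqslant m\leqslant 1$, and for $O\in\widehat{F}$ apply Theorem~\ref{theorem:adjunction} to $\widehat{F}$ to obtain the inequality contradicting the third hypothesis. The only difference is cosmetic---you are slightly more explicit in checking the coefficient bounds before invoking Theorem~\ref{theorem:adjunction}.
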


\begin{proof}
We may assume that the log pair \eqref{equation:log-pair-6} is not log canonical at $O$ and $O\not\in\widehat{C}\cup\widehat{Z}$.
If $O\not\in\widehat{F}$, then $m\geqslant\widetilde{m}=\widehat{D}\cdot G\geqslant(\widehat{D}\cdot G)_O>1$
by Theorem~\ref{theorem:adjunction}, so that $O\in\widehat{F}$. Then
$$
m-\widetilde{m}=\big(\widehat{D}\cdot\widehat{F}\big)_O>1-\Big(a\mathrm{mult}_{P}\big(C\big)+a\mathrm{mult}_{Q}\big(\widetilde{C}\big)+b\mathrm{mult}_{P}\big(Z\big)+b\mathrm{mult}_{Q}\big(\widetilde{Z}\big)+m+\widetilde{m}-2\Big),
$$
by Theorem~\ref{theorem:adjunction}, so that $a\mathrm{mult}_{P}\big(C\big)+a\mathrm{mult}_{Q}(\widetilde{C})+b\mathrm{mult}_{P}(Z)+b\mathrm{mult}_{Q}(\widetilde{Z})+2m>3$.
\end{proof}

\section{Eight local lemmas}
\label{section:eight-lemmas}

Let us use notations and assumptions of Section~\ref{section:pairs}. Fix $x\in\mathbb{Q}$ such that $0\leqslant x\leqslant 1$.~Put
$$
\mathrm{lct}_{P}\big(S,C\big)=\mathrm{sup}\Big\{\lambda\in\mathbb{Q}\ \Big|\ \text{the log pair}\ \big(S, \lambda C\big)\ \text{is log canonical at}\ P\Big\}\in\mathbb{Q}_{>0}.
$$

\begin{lemma}
\label{lemma:local-1}
Suppose that $C$ has an ordinary node or an ordinary cusp at $P$, $a\leqslant\frac{x}{2}$ and
$$
\big(D\cdot C\big)_{P}\leqslant\frac{4}{3}+\frac{x}{6}-a.
$$
Then the log pair $(S, aC+D)$ is log canonical at $P$.
\end{lemma}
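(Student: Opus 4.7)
The plan is to argue by contradiction; assume $(S,aC+D)$ is not log canonical at $P$. Since $C$ has multiplicity two at $P$, the hypotheses give immediate bounds on $m=\mathrm{mult}_P(D)$: from $(D\cdot C)_P\geqslant\mathrm{mult}_P(D)\cdot\mathrm{mult}_P(C)=2m$ together with $(D\cdot C)_P\leqslant\frac{4}{3}+\frac{x}{6}-a$, I obtain $m\leqslant\frac{2}{3}+\frac{x}{12}-\frac{a}{2}<1$, and then $2a+m\leqslant\frac{3a}{2}+\frac{2}{3}+\frac{x}{12}\leqslant\frac{5x}{6}+\frac{2}{3}<2$ using $a\leqslant x/2\leqslant 1/2$ and $x\leqslant 1$. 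Theorem~\ref{theorem:Skoda} applied to $aC+D$ forces $\mathrm{mult}_P(aC+D)=2a+m>1$, so the coefficient $2a+m-1$ lies in $(0,1)$.

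I next blow up $P$ via $f\colon\widetilde{S}\to S$, producing the pair~\eqref{equation:log-pair-2}, which must fail to be log canonical at some point $Q\in F$. Since $m<1$ and $a\cdot\mathrm{mult}_P(C)+m=2a+m<2$, Corollary~\ref{corollary:adjunction} forces $Q\in\widetilde{C}$. In both cases under consideration, the proper transform $\widetilde{C}$ is smooth at $Q$ (a node has two smooth transverse branches, an ordinary cusp is desingularised by a single blow-up) and $\widetilde{C}\not\subseteq\mathrm{Supp}(\widetilde{D})$.

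I then apply Theorem~\ref{theorem:adjunction} at $Q$, taking $\widetilde{C}$ as the smooth curve and the effective $\mathbb{Q}$-divisor $\widetilde{D}+(2a+m-1)F$ as the auxiliary divisor (it is effective because $2a+m-1>0$). Failure of log canonicity gives
$$
\bigl(\widetilde{D}\cdot\widetilde{C}\bigr)_Q+(2a+m-1)\bigl(F\cdot\widetilde{C}\bigr)_Q>1.
$$
If $P$ is a node, then $\widetilde{C}\cap F$ consists of two transverse points, so $(F\cdot\widetilde{C})_Q=1$ and $(\widetilde{D}\cdot\widetilde{C})_Q>2-2a-m$; combined with the projection-formula identity
$$
(D\cdot C)_P=2m+\sum_{Q'\in F\cap\widetilde{C}}\bigl(\widetilde{D}\cdot\widetilde{C}\bigr)_{Q'},
$$
this yields $(D\cdot C)_P>m+2-2a$ and hence $m<-\frac{2}{3}+\frac{x}{6}+a\leqslant -\frac{2}{3}+\frac{2x}{3}\leqslant 0$, which is absurd. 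If $P$ is a cusp, then $\widetilde{C}$ is tangent to $F$ at $Q$, so $(F\cdot\widetilde{C})_Q=2$ and the same reasoning produces $(\widetilde{D}\cdot\widetilde{C})_Q>3-4a-2m$; the projection formula gives $(D\cdot C)_P>3-4a$, which together with $(D\cdot C)_P\leqslant\frac{4}{3}+\frac{x}{6}-a$ forces $a>\frac{5}{9}-\frac{x}{18}$, incompatible with $a\leqslant x/2$ and $x\leqslant 1$.

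The main obstacle is handling the subtly different local geometry of the node and the cusp uniformly: the only quantitative difference between the two cases is the intersection multiplicity $(F\cdot\widetilde{C})_Q$, which is $1$ in the nodal case and $2$ in the cuspidal case, and this is precisely what dictates the tightness of the bound $\frac{4}{3}+\frac{x}{6}-a$. A secondary issue is verifying a priori that the coefficient $2a+m-1$ of $F$ lies in $[0,1]$, so that Theorem~\ref{theorem:adjunction} can be applied to the effective divisor $\widetilde{D}+(2a+m-1)F$; both the lower bound (from Theorem~\ref{theorem:Skoda}) and the upper bound (from the arithmetic of $a$, $x$ and $(D\cdot C)_P$) are forced by the same hypotheses in the lemma.
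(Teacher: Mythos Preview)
Your proof is correct and follows essentially the same route as the paper's: a single blow-up, Corollary~\ref{corollary:adjunction} to force $Q\in\widetilde{C}$, then Theorem~\ref{theorem:adjunction} along $\widetilde{C}$ to derive the contradiction $a>\frac{5}{9}-\frac{x}{18}$. The only differences are cosmetic: you treat the node and cusp separately (using $(F\cdot\widetilde{C})_Q=1$ or $2$), whereas the paper handles both at once via the uniform bound $(F\cdot\widetilde{C})_Q\leqslant 2$, and you make explicit the verification $2a+m-1>0$ (from Theorem~\ref{theorem:Skoda}) that the paper leaves implicit.
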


\begin{proof}
We have $2m\leqslant\mathrm{mult}_{P}(D)\mathrm{mult}_{P}(C)\leqslant(D\cdot C)_{P}\leqslant \frac{4}{3}+\frac{x}{6}-a$,
so that $2m+a\leqslant \frac{4}{3}+\frac{x}{6}$.
Then $m\leqslant\frac{3}{4}$ and $m+2a=m+\frac{a}{2}+\frac{3a}{2}\leqslant\frac{\frac{4}{3}+\frac{x}{6}}{2}+\frac{3a}{2}\leqslant\frac{\frac{4}{3}+\frac{x}{6}}{2}+\frac{3x}{4}=\frac{2}{3}+\frac{5}{6}x\leqslant\frac{3}{2}$.

Suppose that $(S, aC+D)$ is not log canonical at $P$. Let us seek for a contradiction.
We may assume that \eqref{equation:log-pair-2} is not log canonical at $Q$.
Then $Q\in\widetilde{C}$ by Corollary~\ref{corollary:adjunction}.~Then
$$
\big(\widetilde{D}\cdot\widetilde{C}\big)_O>1-\big(2a+m-1\big)\big(\widetilde{C}\cdot F\big)_O\geqslant 1-2\big(2a+m-1\big)=3-4a-2m.
$$
On the other hand, we have
$\frac{4}{3}+\frac{x}{6}-a\geqslant(D\cdot C)_{P}\geqslant 2m+(\widetilde{D}\cdot\widetilde{C})_O$,
so that $a>\frac{5}{9}-\frac{x}{18}$. Then $\frac{x}{2}\geqslant a>\frac{5}{9}-\frac{x}{18}$,
so that $x>1$. But $x\leqslant 1$ by assumption.
\end{proof}

\begin{lemma}
\label{lemma:local-2}
Suppose that $C$ has an ordinary node or an ordinary cusp at $P$, and
$$
\big(D\cdot C\big)_{P}\leqslant\mathrm{lct}_{P}(S,C)+\frac{x}{2}.
$$
Suppose also that $a\leqslant\mathrm{lct}_{P}(S,C)-\frac{x}{2}$.
Then $(S, aC+D)$ is log canonical at $P$.
\end{lemma}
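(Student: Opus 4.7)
The plan is to follow the template of the proof of Lemma~\ref{lemma:local-1}. Write $c = \mathrm{lct}_{P}(S,C)$, so $c = 1$ in the nodal case and $c = \tfrac{5}{6}$ in the cuspidal case. Since $\mathrm{mult}_{P}(C) = 2$ and we may assume $C\not\subseteq\mathrm{Supp}(D)$, the hypothesis $(D\cdot C)_{P}\leqslant c + \tfrac{x}{2}\leqslant\tfrac{3}{2}$ gives $2m\leqslant(D\cdot C)_{P}\leqslant c + \tfrac{x}{2}$, and in particular $m\leqslant\tfrac{3}{4}$. Suppose for contradiction that $(S, aC+D)$ is not log canonical at $P$, and let $f\colon\widetilde{S}\to S$ be the blow-up of $P$ with exceptional curve $F$. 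Then $\bigl(\widetilde{S},\, a\widetilde{C}+\widetilde{D}+(2a+m-1)F\bigr)$ fails to be log canonical at some point $Q\in F$.

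I would split into two cases depending on whether $Q\in\widetilde{C}$. If $Q\notin\widetilde{C}$, Corollary~\ref{corollary:adjunction} combined with $m\leqslant\tfrac{3}{4}<1$ forces $2a+m>2$; combining with $a\leqslant c-\tfrac{x}{2}$ and $m\leqslant\tfrac{c}{2}+\tfrac{x}{4}$ this yields $c>\tfrac{4}{5}+\tfrac{3x}{10}$. If $Q\in\widetilde{C}$, then $\widetilde{C}$ is smooth at $Q$ (since $C$ has an ordinary singularity) and $(F\cdot\widetilde{C})_{Q}$ equals $1$ in the nodal case and $2$ in the cuspidal case. Applying Theorem~\ref{theorem:adjunction} to $\widetilde{C}$ at $Q$ gives $(\widetilde{D}\cdot\widetilde{C})_{Q}>1-(2a+m-1)(F\cdot\widetilde{C})_{Q}$, and combining with $(D\cdot C)_{P}\geqslant 2m+(\widetilde{D}\cdot\widetilde{C})_{Q}$ and $(D\cdot C)_{P}\leqslant c+\tfrac{x}{2}$ yields $m<2a-1+\tfrac{x}{2}$ in the nodal case and $a>\tfrac{3-c}{4}-\tfrac{x}{8}$ in the cuspidal case. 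In each case, once $x$ is large enough (at least $\tfrac{2}{3}$ for a node, at least $\tfrac{7}{9}$ for a cusp), these inequalities contradict $a\leqslant c-\tfrac{x}{2}$ directly.

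For the residual small-$x$ ranges one continues the blow-up analysis: blow up $Q$ to obtain $g\colon\widehat{S}\to\widetilde{S}$ with exceptional $G$, and in the cuspidal case perform a third blow-up at the triple point $\widehat{C}\cap\widehat{F}\cap G$ to reach the minimal log resolution of $C$. At each stage, Lemmas~\ref{lemma:adjunction-2} and~\ref{lemma:adjunction-4} restrict the non-log-canonical locus to lie on the successive proper transform of $C$, while the intersection-theoretic identity $2m+\widetilde{m}+\widehat{m}\leqslant(D\cdot C)_{P}\leqslant c+\tfrac{x}{2}$ combined with $a\leqslant c-\tfrac{x}{2}$ forces the new coefficients $3a+m+\widetilde{m}-2$ and $6a+2m+\widetilde{m}+\widehat{m}-4$ on the exceptional divisors appearing in the log pullback of $K_{S}+aC+D$ to be at most $1$. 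The main obstacle is the cuspidal case for small $x$: the tangency $(F\cdot\widetilde{C})_{Q}=2$ is not resolved after one blow-up, so the estimates must be carried through all three blow-ups of the minimal log resolution of the cusp while simultaneously keeping tight control of the multiplicities $m$, $\widetilde{m}$, $\widehat{m}$ of the successive proper transforms of $D$.
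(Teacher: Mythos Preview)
Your argument cannot be completed, and the reason is not a flaw in your strategy but a typo in the lemma statement: the intersection hypothesis should read $(D\cdot C)_P\leqslant\mathrm{lct}_P(S,C)+\tfrac{x}{2}-a$. As printed the lemma is false. For a counterexample take $x=0$, a nodal $C$ (so $c=1$), $a=1$, and $D=\tfrac{1}{2}L$ with $L$ a smooth curve through $P$ transverse to both branches of $C$; then $(D\cdot C)_P=1$ and $a=1$ satisfy the stated hypotheses, yet after one blow-up the exceptional coefficient is $2a+m-1=\tfrac{3}{2}>1$.

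The paper's own proof silently uses the corrected hypothesis: its opening step ``This gives $2m+a\leqslant 1+\tfrac{x}{2}$'' only follows from $2m\leqslant c+\tfrac{x}{2}-a$, and later the proof invokes $(D\cdot C)_P\leqslant 1+\tfrac{x}{2}-a$ and $(D\cdot C)_P\leqslant\tfrac{5}{6}+\tfrac{x}{2}-a$ explicitly. The unique application of the lemma, in the proof of Lemma~\ref{lemma:2}, supplies exactly this stronger bound.

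Your outline follows the paper's template, and the gap surfaces precisely where the missing $-a$ bites. From $2m\leqslant c+\tfrac{x}{2}$ and $a\leqslant c-\tfrac{x}{2}$ alone you only get $2a+m\leqslant\tfrac{5c}{2}-\tfrac{3x}{4}$, which exceeds $2$ for small $x$; hence Corollary~\ref{corollary:adjunction} does not exclude $Q\notin\widetilde C$, and the hypotheses of Lemmas~\ref{lemma:adjunction-2} and~\ref{lemma:adjunction-4} (each of which needs $2a+m\leqslant 2$) are unavailable. Your assertion that the coefficients $3a+m+\widetilde m-2$ and $6a+2m+\widetilde m+\widehat m-4$ remain at most $1$ is therefore unfounded, and no further blow-ups can rescue it --- in the counterexample above the very first exceptional coefficient is already $>1$. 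Under the corrected hypothesis one has $2m+a\leqslant c+\tfrac{x}{2}$, all of these bounds follow at once, and your sketch then coincides with the paper's proof.
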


\begin{proof}
We have $2m\leqslant(D\cdot C)_{P}$. This gives $2m+a\leqslant 1+\frac{x}{2}$. Thus, we have $m\leqslant\frac{1+\frac{x}{2}}{2}\leqslant\frac{3}{4}$.
Similarly, we get $m+2a=m+\frac{a}{2}+\frac{3a}{2}\leqslant\frac{1+\frac{x}{2}}{2}+\frac{3a}{2}\leqslant\frac{1+\frac{x}{2}}{2}+\frac{3}{2}(1-\frac{x}{2})=2-\frac{x}{2}\leqslant 2$.

Suppose that  $(S, aC+D)$ is not log canonical at $P$. Let us seek for a contradiction.
We~may assume that the pair \eqref{equation:log-pair-2} is not log canonical at $Q$.
Then $Q\in\widetilde{C}$ by Corollary~\ref{corollary:adjunction}.
We may assume that \eqref{equation:log-pair-3} is not log canonical at $O$.
Then $O\in\widehat{C}$ by Lemma~\ref{lemma:adjunction-2}, since
$$
3a+2m\leqslant 2a+1+\frac{x}{2}\leqslant 2-x+1+\frac{x}{2}=3-\frac{x}{2}\leqslant 3,
$$
because $2m+a\leqslant 1+\frac{x}{2}$ and $a\leqslant 1-\frac{x}{2}$.
If $O\not\in\widehat{F}$, then Theorem~\ref{theorem:adjunction}  gives
$$
1+\frac{x}{2}-a\geqslant\big(D\cdot C\big)_{P}-2m-\widetilde{m}\geqslant\big(\widehat{D}\cdot\widehat{C}\big)_O>1-\big(3a+m+\widetilde{m}-2\big),
$$
which implies that $2a+\frac{x}{2}>2+m$.
But $2a+\frac{x}{2}\leqslant 2-\frac{x}{2}$, because $a\leqslant \mathrm{lct}_{P}(S,C)-\frac{x}{2}\leqslant 1-\frac{x}{2}$.
This shows that $O=G\cap\widehat{F}\cap\widehat{C}$.
In particular, the curve $C$ has an ordinary cusp at $P$.
By assumption, we have $a\leqslant\frac{5}{6}-\frac{x}{2}$ and $2m+a\leqslant\frac{5}{6}+\frac{x}{2}$.
This gives $6a+4m\leqslant 5-x\leqslant 5$.

Put $E=H\cap\overline{C}$. Then \eqref{equation:log-pair-4} is not log canonical at $E$ by Lemma~\ref{lemma:adjunction-4}.~Then
$$
\big(\overline{D}\cdot\overline{C}\big)_E>1-\Big(6a+2m+\widetilde{m}+\widehat{m}-4\Big)=5-6a-2m-\widetilde{m}-\widehat{m}
$$
by Theorem~\ref{theorem:adjunction}.
Thus, we have $\frac{5}{6}+\frac{x}{2}-a\geqslant(D\cdot C)_{P}\geqslant 2m+\widetilde{m}+\widehat{m}+(\overline{D}\cdot\overline{C})_E>5-6a$.
This gives $a>\frac{5}{6}-\frac{x}{10}$. But $a\leqslant\frac{5}{6}-\frac{x}{2}$, which is absurd.
\end{proof}

\begin{lemma}
\label{lemma:local-3}
Suppose that $C$ is smooth at $P$, $a\leqslant\frac{1}{3}+\frac{x}{2}$, $m+a\leqslant 1+\frac{x}{2}$ and
$$
\big(D\cdot C\big)_{P}\leqslant 1-\frac{x}{2}+a.
$$
Then the log pair $(S, aC+D)$ is log canonical at $P$.
\end{lemma}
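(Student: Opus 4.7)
The plan is to imitate the blow-up strategy used for Lemmas~\ref{lemma:local-1} and~\ref{lemma:local-2}: assume $(S,aC+D)$ is not log canonical at $P$, apply Theorem~\ref{theorem:Skoda} and Corollary~\ref{corollary:adjunction} to locate the non-log-canonical locus on a specific curve after one blow-up, and then use Theorem~\ref{theorem:adjunction} together with the intersection bound $(D\cdot C)_P\le 1-\tfrac{x}{2}+a$ to extract a numerical inequality incompatible with the hypothesis $a\le\tfrac{1}{3}+\tfrac{x}{2}$. A second blow-up will be required to cover the upper portion of the range of $x$.

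A quick preliminary is $m\leqslant 1$. Since $C$ is smooth at $P$ and we may assume $C\not\subseteq\mathrm{Supp}(D)$ by Lemma~\ref{lemma:convexity}, we get $m\leqslant(D\cdot C)_P\leqslant 1-\tfrac{x}{2}+a$, and the hypothesis on $m+a$ gives $m\leqslant 1+\tfrac{x}{2}-a$; adding yields $2m\leqslant 2$. Theorem~\ref{theorem:Skoda} applied to $(S,aC+D)$ gives $a+m>1$, so the coefficient $a+m-1$ appearing in \eqref{equation:log-pair-2} is positive, and Corollary~\ref{corollary:adjunction} forces the non-LC point $Q\in F$ to lie on $\widetilde{C}$ (since $a+m\le 1+\tfrac{x}{2}<2$ and $m\le 1$).

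At $Q\in\widetilde{C}\cap F$ the curve $\widetilde{C}$ is smooth and meets $F$ transversally (because $C$ is smooth at $P$), so Theorem~\ref{theorem:adjunction} produces $(\widetilde{D}\cdot\widetilde{C})_Q>2-a-m$. Combining with $(D\cdot C)_P\geqslant m+(\widetilde{D}\cdot\widetilde{C})_Q$ and the third hypothesis gives $a>\tfrac{1}{2}+\tfrac{x}{4}$, which already contradicts $a\leqslant\tfrac{1}{3}+\tfrac{x}{2}$ whenever $x\leqslant\tfrac{2}{3}$. For the remaining range $x>\tfrac{2}{3}$, I blow up $Q$ to obtain the pair \eqref{equation:log-pair-3}, whose non-LC point $O\in G$ must lie on $\widehat{C}$ by Lemma~\ref{lemma:adjunction-2} (its three numerical hypotheses reduce to $m\le 1$, $a+m\le 2$, $2a+2m\le 3$, all of which follow from the assumptions together with $x\leqslant 1$). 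The transversality of $F$ and $\widetilde{C}$ at $Q$ ensures that $\widehat{F}$ and $\widehat{C}$ meet $G$ at distinct points, so $O\notin\widehat{F}$. Then Theorem~\ref{theorem:adjunction} at $O\in\widehat{C}\setminus\widehat{F}$ gives $(\widehat{D}\cdot\widehat{C})_O>3-2a-m-\widetilde{m}$, and combining with $(D\cdot C)_P\geqslant m+\widetilde{m}+(\widehat{D}\cdot\widehat{C})_O$ yields $a>\tfrac{2}{3}+\tfrac{x}{6}$, which together with $a\leqslant\tfrac{1}{3}+\tfrac{x}{2}$ forces $x>1$, contradicting $x\leqslant 1$.

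The step I expect to require the most care is verifying that no third blow-up is needed: it hinges on the fact that, because $C$ is smooth at $P$, the curves $F$ and $\widetilde{C}$ are automatically transversal at $Q$, so $\widehat{C}\cap G$ and $\widehat{F}\cap G$ are distinct points and we never fall into the $O=G\cap\widehat{F}$ branch that triggers Lemma~\ref{lemma:adjunction-4}. The arithmetic on the boundary $x=1$ is tight, so I will have to be scrupulous about strict-versus-weak inequalities when invoking Theorem~\ref{theorem:adjunction} twice in succession and when applying the intersection bound at the infinitely near points.
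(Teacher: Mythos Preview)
Your argument is correct and follows essentially the same two--blow-up route as the paper: locate $Q\in\widetilde{C}$ via Corollary~\ref{corollary:adjunction}, locate $O\in\widehat{C}$ via Lemma~\ref{lemma:adjunction-2}, and finish with Theorem~\ref{theorem:adjunction} to obtain $a>\tfrac{2}{3}+\tfrac{x}{6}$. Two small remarks: the case split at $x=\tfrac{2}{3}$ is unnecessary, since the second blow-up already yields the contradiction for all $0\le x\le 1$ (the paper does not pause at one blow-up); and your appeal to Lemma~\ref{lemma:convexity} to arrange $C\not\subseteq\mathrm{Supp}(D)$ is out of place in this purely local setting---that condition is a standing hypothesis from Section~\ref{section:pairs}, so you may simply use $m\le (D\cdot C)_P$ directly.
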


\begin{proof}
We have $m\leqslant(D\cdot C)_{P}$, so that $m-a\leqslant 1-\frac{x}{2}$.
Then $m\leqslant 1$, since $m+a\leqslant 1+\frac{x}{2}$.

Suppose that $(S, aC+D)$ is not log canonical at $P$. Let us seek for a contradiction.
We~may assume that the pair \eqref{equation:log-pair-2} is not log canonical at $Q$.
Then $Q\in\widetilde{C}$ by Corollary~\ref{corollary:adjunction}.
We~may assume that \eqref{equation:log-pair-3} is not log canonical at $O$.
Then $O\in\widehat{C}$ by Lemmas~\ref{lemma:adjunction-2}. Then
$$
\big(\widehat{D}\cdot\widehat{C}\big)_O>1-\big(2a+m+\widetilde{m}-2\big)=3-2a-m-\widetilde{m}
$$
by Theorem~\ref{theorem:adjunction}.
Then
$1-\frac{x}{2}+a\geqslant(D\cdot C)_{P}\geqslant m+(\widetilde{D}\cdot\widetilde{C})_{Q}\geqslant m+\widetilde{m}+(\widehat{D}\cdot\widehat{C})_O>3-2a$.
This give $a>\frac{2}{3}+\frac{x}{6}$, which is impossible, since $a\leqslant\frac{1}{3}+\frac{x}{2}$ and $x\leqslant 1$.
\end{proof}

\begin{lemma}
\label{lemma:local-4}
Suppose that $C$ is smooth at $P$, $a\leqslant\frac{8}{9}-\frac{x}{18}$, $m+a\leqslant\frac{4}{3}+\frac{x}{6}$ and
$$
\big(D\cdot C\big)_{P}\leqslant\frac{x}{2}+a.
$$
Then the log pair $(S, aC+D)$ is log canonical at $P$.
\end{lemma}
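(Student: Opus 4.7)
The plan is to mimic the proof of Lemma~\ref{lemma:local-3} almost verbatim, with two successive blowups rather than three. The preliminary point I would establish is $m \leq 1$: combining $m \leq (D\cdot C)_P \leq \frac{x}{2} + a$ and $m \leq \frac{4}{3} + \frac{x}{6} - a$ with the case split $a \lessgtr \frac{1}{3} + \frac{x}{6}$ and the hypotheses $x \leq 1$, $a \leq \frac{8}{9} - \frac{x}{18}$, either inequality gives $m \leq 1$.

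Assuming for contradiction that $(S, aC + D)$ is not log canonical at $P$, I would blow up $P$ to get $\widetilde{S}$; the log pair~\eqref{equation:log-pair-2} then fails log canonicity at some $Q \in F$. Since $a + m \leq \frac{4}{3} + \frac{x}{6} < 2$ and $m \leq 1$, Corollary~\ref{corollary:adjunction} forces $Q \in \widetilde{C}$. Blowing up $Q$ next, the log pair~\eqref{equation:log-pair-3} fails log canonicity at some $O \in G$; all three hypotheses of Lemma~\ref{lemma:adjunction-2} follow from $m + a \leq \frac{4}{3} + \frac{x}{6}$ together with $x \leq 1$ (the most restrictive being $2a + 2m \leq \frac{8}{3} + \frac{x}{3} \leq 3$), so $O \in \widehat{C}$.

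At this stage I would apply Theorem~\ref{theorem:adjunction} to the smooth curve $\widehat{C}$ at $O$. Because $\widehat{F}$ and $\widehat{C}$ have been separated by the second blowup and $(G \cdot \widehat{C})_O = 1$, the theorem yields
$$
(\widehat{D} \cdot \widehat{C})_O > 1 - (2a + m + \widetilde{m} - 2) = 3 - 2a - m - \widetilde{m}.
$$
The local intersection chain $(D \cdot C)_P \geq m + \widetilde{m} + (\widehat{D} \cdot \widehat{C})_O$, combined with $(D\cdot C)_P \leq \frac{x}{2} + a$, then gives $a > 1 - \frac{x}{6}$. This contradicts $a \leq \frac{8}{9} - \frac{x}{18}$ since $1 - \frac{x}{6} - (\frac{8}{9} - \frac{x}{18}) = \frac{1-x}{9} \geq 0$ for $x \leq 1$.

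The main delicate point I expect is the preliminary case split establishing $m \leq 1$; once that is secured, the chain of blowups proceeds mechanically, and the numerical thresholds of the lemma are calibrated precisely so that two blowups suffice and no third is needed (contrast Lemma~\ref{lemma:local-2}).
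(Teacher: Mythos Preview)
Your proof is correct and follows essentially the same two-blowup argument as the paper. The only cosmetic difference is in establishing $m\leqslant 1$: rather than your case split on $a\lessgtr\tfrac{1}{3}+\tfrac{x}{6}$, the paper simply adds $m-a\leqslant\tfrac{x}{2}$ (from $m\leqslant(D\cdot C)_P$) to the hypothesis $m+a\leqslant\tfrac{4}{3}+\tfrac{x}{6}$ to obtain $m\leqslant\tfrac{2}{3}+\tfrac{x}{3}\leqslant 1$ directly, which is a bit cleaner and makes the bound $a\leqslant\tfrac{8}{9}-\tfrac{x}{18}$ irrelevant until the final contradiction.
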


\begin{proof}
We have $m\leqslant(D\cdot C)_{P}$, so that $m-a\leqslant\frac{x}{2}$.
Then $m\leqslant\frac{2}{3}+\frac{x}{3}\leqslant 1$, since $m+a\leqslant\frac{4}{3}+\frac{x}{6}$.

Suppose that $(S, aC+D)$ is not log canonical at $P$. Let us seek for a contradiction.
We~may assume that the pair \eqref{equation:log-pair-2} is not log canonical at $Q$.
Then $Q\in\widetilde{C}$ by Corollary~\ref{corollary:adjunction}.
We~may assume that \eqref{equation:log-pair-3} is not log canonical at $O$. Then $O\in\widehat{C}$ by Lemmas~\ref{lemma:adjunction-2}. Then
$$
\big(\widehat{D}\cdot\widehat{C}\big)_O>1-\big(2a+m+\widetilde{m}-2\big)=3-2a-m-\widetilde{m}
$$
by Theorem~\ref{theorem:adjunction}.
Then $\frac{x}{2}+a\geqslant(D\cdot C)_{P}\geqslant m+(\widetilde{D}\cdot\widetilde{C})_{Q}\geqslant m+\widetilde{m}+(\widehat{D}\cdot\widehat{C})_O>3-2a$.
This gives $a>1-\frac{x}{6}$, which is impossible, since $a\leqslant\frac{8}{9}-\frac{x}{18}$ and $x\leqslant 1$.
\end{proof}

\begin{lemma}
\label{lemma:local-5}
Suppose that $C$ has an ordinary node or an ordinary cusp at $P$, $a\leqslant \frac{1+x}{3}$~and
$$
\big(D\cdot C\big)_{P}\leqslant 2-2a.
$$
Then the log pair $(S, aC+D)$ is log canonical at $P$.
\end{lemma}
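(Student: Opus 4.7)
The plan is to mimic the strategies used in Lemmas \ref{lemma:local-1} and \ref{lemma:local-2}: assume $(S,aC+D)$ is not log canonical at $P$, chase the non-log-canonical point through one or two blowups, and contradict the bound $(D\cdot C)_P\leqslant 2-2a$. The coarse multiplicity estimate is immediate: since $\mathrm{mult}_P(C)=2$, we have $2m\leqslant\mathrm{mult}_P(D)\cdot 2\leqslant (D\cdot C)_P\leqslant 2-2a$, so $m\leqslant 1-a\leqslant 1$ and $2a+m\leqslant 1+a\leqslant\frac{4+x}{3}\leqslant 2$. These are exactly the numerical inputs needed to keep the later lemmas applicable.

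Let $f\colon\widetilde{S}\to S$ be the blowup of $P$. By Corollary \ref{corollary:adjunction}, any point $Q\in F$ where \eqref{equation:log-pair-2} fails to be log canonical must lie on $\widetilde{C}$. \textbf{Node case:} Here $\widetilde{C}$ is smooth with $(\widetilde{C}\cdot F)_Q=1$. Applying Theorem~\ref{theorem:adjunction} at $Q$ with the curve $\widetilde{C}$ yields $(\widetilde{D}\cdot\widetilde{C})_Q>1-(2a+m-1)=2-2a-m$, hence
\[
2-2a\;\geqslant\;(D\cdot C)_P\;=\;2m+(\widetilde{D}\cdot\widetilde{C})_Q\;>\;m+2-2a,
\]
forcing $m<0$, a contradiction. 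So the node case closes after one blowup.

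\textbf{Cusp case:} Now $(\widetilde{C}\cdot F)_Q=2$ (tangent), and Theorem~\ref{theorem:adjunction} at $Q$ only gives $(\widetilde{D}\cdot\widetilde{C})_Q>3-4a-2m$, leading to $a>1/2$, which is compatible with $a\leqslant\frac{1+x}{3}\leqslant 2/3$. So one must blow up $Q$ to obtain $g\colon\widehat{S}\to\widetilde{S}$ with exceptional $G$, and examine the pair \eqref{equation:log-pair-3}. The numerical conditions of Lemma~\ref{lemma:adjunction-2} hold: $m\leqslant 1$, $2a+m\leqslant 2$, and $3a+2m\leqslant 2+a\leqslant 8/3<3$. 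So the failure point $O\in G$ must lie on $\widehat{C}$. Since $\widetilde{C}$ is tangent to $F$ at $Q$, the proper transforms $\widehat{C}$, $\widehat{F}$ all meet $G$ at the same point, forcing $O=\widehat{C}\cap G\cap\widehat{F}$, with all three curves pairwise transverse at $O$.

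The final step is to apply Theorem~\ref{theorem:adjunction} to $\widehat{C}$ at $O$, using that $\widehat{C}$ meets both $G$ (with coefficient $3a+m+\widetilde{m}-2$) and $\widehat{F}$ (with coefficient $2a+m-1$) transversally there. This gives
\[
(\widehat{D}\cdot\widehat{C})_O\;>\;1-(2a+m-1)-(3a+m+\widetilde{m}-2)\;=\;4-5a-2m-\widetilde{m}.
\]
Since $\widetilde{C}\cap F=\{Q\}$ and $\widehat{C}\cap G=\{O\}$, we have $(D\cdot C)_P=2m+\widetilde{m}+(\widehat{D}\cdot\widehat{C})_O>4-5a$, contradicting $(D\cdot C)_P\leqslant 2-2a$ because this would force $3a>2$, while $a\leqslant\frac{1+x}{3}\leqslant\frac{2}{3}$. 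The main potential obstacle was making sure the second blowup really is enough and that the triple intersection geometry at $O$ survives for an \emph{ordinary} cusp (so that all three curves are transverse at $O$ and we can legitimately absorb both discrepancy coefficients into the single application of Theorem~\ref{theorem:adjunction}); this is exactly where the tangency $(\widetilde{C}\cdot F)_Q=2$ is used.
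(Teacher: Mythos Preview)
Your argument is correct and follows essentially the same path as the paper's proof. The paper treats the node and cusp uniformly by always passing to the second blowup and then splitting according to whether $O\in\widehat{F}$; since $\widehat{C}\cap G=\widehat{F}\cap G$ exactly in the cusp case, this is the same dichotomy as your explicit node/cusp split, with your one-blowup node argument being a mild shortcut. One small correction: in the node display you should write $(D\cdot C)_P\geqslant 2m+(\widetilde{D}\cdot\widetilde{C})_Q$ rather than equality, because $\widetilde{C}$ meets $F$ in two points and $\widetilde{D}$ may contribute at the other one; the inequality still yields $m<0$ and the contradiction goes through.
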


\begin{proof}
We have $2m\leqslant(D\cdot C)_{P}\leqslant 2-2a$. This gives $m+a\leqslant 1$, so that we have $m\leqslant 1$.
Then $m+2a\leqslant 1+a\leqslant 1+\frac{1+x}{3}=\frac{4+x}{3}\leqslant\frac{5}{3}$
and $3a+2m\leqslant 2+a\leqslant 2+\frac{1+x}{3}=\frac{7+x}{3}\leqslant\frac{8}{3}$.

Suppose that $(S, aC+D)$ is not log canonical at $P$. Let us seek for a contradiction.
We~may assume that the pair \eqref{equation:log-pair-2} is not log canonical at $Q$.
Then $Q\in\widetilde{C}$ by Corollary~\ref{corollary:adjunction}.
We may assume that \eqref{equation:log-pair-3} is not log canonical at $O$.
Then $O\in\widehat{C}$ by Lemma~\ref{lemma:adjunction-2}.

If $O\not\in\widehat{F}$, then $(\widehat{D}\cdot\widehat{C})_O>3-3a-m-\widetilde{m}$ by Theorem~\ref{theorem:adjunction},
so that
$$
2-2a\geqslant\big(D\cdot C\big)_{P}\geqslant 2m+\big(\widetilde{D}\cdot\widetilde{C}\big)_{Q}\geqslant 2m+\widetilde{m}+\big(\widehat{D}\cdot\widehat{C}\big)_O>3-3a,
$$
which is absurd. This~shows that $O=G\cap\widehat{F}\cap\widehat{C}$. Then
$$
\big(\widehat{D}\cdot\widehat{C}\big)_O>1-\big(2a+m-1\big)-\big(3a+m+\widetilde{m}-2\big)=4-5a-2m-\widetilde{m}
$$
by Theorem~\ref{theorem:adjunction}. Then
$2-2a\geqslant(D\cdot C)_{P}\geqslant 2m+\widetilde{m}+(\widehat{D}\cdot\widehat{C})_O>4-5a$,
so that $a>\frac{2}{3}$. But $a\leqslant \frac{1+x}{3}\leqslant\frac{2}{3}$ by assumption. This is a contradiction.
\end{proof}

\begin{lemma}
\label{lemma:local-6}
Suppose that $C$ has an ordinary node or an ordinary cusp at $P$, $a\leqslant\frac{2}{3}$ and
$$
\big(D\cdot C\big)_{P}\leqslant\frac{4}{3}+\frac{2x}{3}-2a.
$$
Then the log pair $(S, aC+D)$ is log canonical at $P$.
\end{lemma}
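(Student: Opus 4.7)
The plan is to mimic the argument for Lemma~\ref{lemma:local-5} almost verbatim, retuning the numerical inequalities to the stronger bound on $(D\cdot C)_{P}$ allowed here. Since $C\not\subseteq\mathrm{Supp}(D)$ and $\mathrm{mult}_{P}(C)=2$, the hypothesis gives
$$
2m\;\leqslant\;\mathrm{mult}_{P}(D)\mathrm{mult}_{P}(C)\;\leqslant\;(D\cdot C)_{P}\;\leqslant\;\tfrac{4}{3}+\tfrac{2x}{3}-2a,
$$
from which one immediately reads off $m\leqslant 1$, $2a+m\leqslant\tfrac{5}{3}$, and $3a+2m\leqslant\tfrac{8}{3}$, using $x\leqslant 1$ and $a\leqslant\tfrac{2}{3}$. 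These are the bookkeeping bounds needed to drive the standard three-blow-up ladder.

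Assume for contradiction that $(S,aC+D)$ is not log canonical at $P$. After blowing up $P$, I may suppose that \eqref{equation:log-pair-2} is not log canonical at some point $Q\in F$. The estimates above together with Corollary~\ref{corollary:adjunction} force $Q\in\widetilde{C}$. After a second blow up at $Q$, Lemma~\ref{lemma:adjunction-2} (applicable since $m\leqslant 1$, $2a+m\leqslant 2$, and $3a+2m\leqslant 3$) shows that \eqref{equation:log-pair-3} can fail to be log canonical only at some point $O\in\widehat{C}\cap G$.

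Now split into the usual two subcases. If $O\notin\widehat{F}$, applying Theorem~\ref{theorem:adjunction} to $\widehat{C}$ at $O$ against the boundary of \eqref{equation:log-pair-3} yields $(\widehat{D}\cdot\widehat{C})_{O}>3-3a-m-\widetilde{m}$, and chaining multiplicities gives
$$
\tfrac{4}{3}+\tfrac{2x}{3}-2a\;\geqslant\;(D\cdot C)_{P}\;\geqslant\;2m+\widetilde{m}+(\widehat{D}\cdot\widehat{C})_{O}\;>\;3-3a+m,
$$
forcing $a>\tfrac{5}{3}-\tfrac{2x}{3}\geqslant 1$, which contradicts $a\leqslant\tfrac{2}{3}$. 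In the remaining subcase $O=G\cap\widehat{F}\cap\widehat{C}$ (only possible when $C$ has a cusp, since for a node $\widetilde{C}$ meets $F$ transversally away from the preimage of the singular direction), Theorem~\ref{theorem:adjunction} picks up boundary contributions from both $\widehat{F}$ and $G$ and gives $(\widehat{D}\cdot\widehat{C})_{O}>4-5a-2m-\widetilde{m}$. Then $(D\cdot C)_{P}>4-5a$, i.e.\ $\tfrac{4}{3}+\tfrac{2x}{3}-2a>4-5a$, whence $a>\tfrac{8}{9}-\tfrac{2x}{9}\geqslant\tfrac{2}{3}$, again contradicting $a\leqslant\tfrac{2}{3}$.

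I do not anticipate a genuine obstacle: the proof is template-identical to Lemma~\ref{lemma:local-5}, and the only thing to watch carefully is that the numerical inequalities needed to invoke Corollary~\ref{corollary:adjunction} and Lemma~\ref{lemma:adjunction-2} are indeed delivered by the hypothesis $a\leqslant\tfrac{2}{3}$, which is weaker than the hypothesis $a\leqslant\tfrac{1+x}{3}$ of Lemma~\ref{lemma:local-5}. The compensating strengthening of the intersection bound from $2-2a$ to $\tfrac{4}{3}+\tfrac{2x}{3}-2a$ is precisely what makes both case-contradictions still close, since it shifts the two threshold inequalities from $a>1$ and $a>\tfrac{2}{3}$ to $a>\tfrac{5}{3}-\tfrac{2x}{3}$ and $a>\tfrac{8}{9}-\tfrac{2x}{9}$, each of which is still incompatible with $a\leqslant\tfrac{2}{3}$ for $x\leqslant 1$.
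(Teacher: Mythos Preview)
Your proof is correct and follows essentially the same route as the paper's: the same preliminary bounds $m\leqslant 1$, $2a+m\leqslant\tfrac{5}{3}$, $3a+2m\leqslant\tfrac{8}{3}$, the same two-blow-up ladder with Corollary~\ref{corollary:adjunction} and Lemma~\ref{lemma:adjunction-2}, and the same two-case split at $O$ leading to the contradictions $a>\tfrac{5}{3}-\tfrac{2x}{3}\geqslant 1$ and $a>\tfrac{8}{9}-\tfrac{2x}{9}\geqslant\tfrac{2}{3}$. The only addition is your parenthetical remark that $O=G\cap\widehat{F}\cap\widehat{C}$ forces the cusp case, which is true but not needed for the argument.
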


\begin{proof}
We have $2m\leqslant(D\cdot C)_{P}$,
so that $m+a\leqslant\frac{2}{3}+\frac{x}{3}\leqslant 1$. Then $m\leqslant 1$ and
$m+2a\leqslant\frac{5}{3}$.
Similarly, we see that $3a+2m\leqslant\frac{4}{3}+\frac{2x}{3}+a\leqslant\frac{4}{3}+\frac{2x}{3}+\frac{2}{3}=2+\frac{2x}{3}\leqslant\frac{8}{3}<3$.

Suppose that $(S, aC+D)$ is not log canonical at $P$. Let us seek for a contradiction.
We~may assume that the pair \eqref{equation:log-pair-2} is not log canonical at $Q$.
Then $Q\in\widetilde{C}$ by Corollary~\ref{corollary:adjunction}.
We~may assume that \eqref{equation:log-pair-3} is not log canonical at $O$.
Then $O\in\widehat{C}$ by Lemma~\ref{lemma:adjunction-2}.

If $O\not\in\widehat{F}$, then
$\frac{4}{3}+\frac{2x}{3}-2a\geqslant(D\cdot C)_{P}\geqslant 2m+\widetilde{m}+(\widehat{D}\cdot\widehat{C})_O>m+3-3a$ by Theorem~\ref{theorem:adjunction}.
Therefore, if $O\not\in\widehat{F}$, then $a>\frac{5}{3}-\frac{2x}{3}\geqslant 1$.
But $a\leqslant\frac{2}{3}$. This shows that $O=G\cap\widehat{F}\cap\widehat{C}$.
Then~$(\widehat{D}\cdot\widehat{C})_O>1-(2a+m-1)-(3a+m+\widetilde{m}-2)=4-5a-2m-\widetilde{m}$~by~Theorem~\ref{theorem:adjunction}.
Then $\frac{4}{3}+\frac{2x}{3}-2a\geqslant(D\cdot C)_{P}\geqslant 2m+\widetilde{m}+(\widehat{D}\cdot\widehat{C})_O>4-5a$,
which gives $a>\frac{2}{3}$.
\end{proof}

\begin{lemma}
\label{lemma:local-7}
Suppose that $C$ and $Z$ are smooth at $P$, $(C\cdot Z)_P\leqslant 2$, and $a+b+m\leqslant 1+\frac{x}{2}$.
Suppose also that $a\leqslant\frac{1+x}{3}$, $b\leqslant\frac{1+x}{3}$, $(D\cdot C)_{P}\leqslant 1+a-2b$ and $(D\cdot Z)_{P}\leqslant 1+b-2a$.
Then the log pair $(S, aC+bZ+D)$ is log canonical at $P$.
\end{lemma}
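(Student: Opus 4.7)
The plan is to mimic the two-blow-up argument of Lemmas~\ref{lemma:local-3}--\ref{lemma:local-6}, keeping both $C$ and $Z$ in play and using Lemmas~\ref{lemma:adjunction-7} and \ref{lemma:adjunction-8} in place of Lemma~\ref{lemma:adjunction-2}. First I record the preliminary bound on $m$: since $C$ and $Z$ are smooth at $P$ and are not components of $D$, one has $m \leqslant (D \cdot C)_P \leqslant 1 + a - 2b$ and $m \leqslant (D \cdot Z)_P \leqslant 1 + b - 2a$; adding these inequalities gives $m \leqslant 1 - \tfrac{a+b}{2} \leqslant 1$, and by hypothesis $a + b + m \leqslant 1 + \tfrac{x}{2} \leqslant 2$. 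I will also exploit the fact that every hypothesis of the lemma is symmetric under the swap $(a, C) \leftrightarrow (b, Z)$.

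Assume for contradiction that $(S, aC + bZ + D)$ is not log canonical at $P$. Then the log pair \eqref{equation:log-pair-5} is not log canonical at some $Q \in F$; by Lemma~\ref{lemma:adjunction-7}, whose hypotheses have just been verified, $Q \in \widetilde{C} \cup \widetilde{Z}$, and by the swap-symmetry I may assume $Q \in \widetilde{C}$. Blowing up $Q$, the log pair \eqref{equation:log-pair-6} fails to be log canonical at some point $O \in G$. To invoke Lemma~\ref{lemma:adjunction-8} I need the cubic bound on multiplicities: if $Q \notin \widetilde{Z}$, then $2m \leqslant 2 - a - b$ gives $2a + b + 2m \leqslant a + 2 \leqslant \tfrac{7+x}{3} < 3$; if instead $Q \in \widetilde{C} \cap \widetilde{Z}$, then $C$ and $Z$ share a tangent direction at $P$, forcing $(C \cdot Z)_P = 2$ and $\mathrm{mult}_Q \widetilde{C} = \mathrm{mult}_Q \widetilde{Z} = 1$, so the bound becomes $2(a+b+m) \leqslant 2 + x \leqslant 3$. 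Thus $O \in \widehat{C} \cup \widehat{Z}$, and again by symmetry I take $O \in \widehat{C}$. Since $\widetilde{C}$ meets $F$ transversely at $Q$ (and meets $\widetilde{Z}$ transversely at $Q$ when applicable), the point $\widehat{C} \cap G$ is distinct from $\widehat{F} \cap G$ and from $\widehat{Z} \cap G$, so $O \notin \widehat{F} \cup \widehat{Z}$.

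The endgame is Theorem~\ref{theorem:adjunction} at $O$ with the smooth curve $\widehat{C}$. Because $O$ avoids $\widehat{F}$ and $\widehat{Z}$, only $\widehat{D}$ and $G$ contribute locally at $O$, so failure of log canonicity forces $(\widehat{D} \cdot \widehat{C})_O > 1 - \gamma$, where $\gamma = 2a + b(1 + \mathrm{mult}_Q \widetilde{Z}) + m + \widetilde{m} - 2$ is the coefficient of $G$ in \eqref{equation:log-pair-6}. On the other hand the two blow-ups give $(\widehat{D} \cdot \widehat{C})_O \leqslant (D \cdot C)_P - m - \widetilde{m} \leqslant 1 + a - 2b - m - \widetilde{m}$. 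Combining the two inequalities collapses to $3a > 2 + b(1 - \mathrm{mult}_Q \widetilde{Z})$, which in either of the two subcases above forces $3a > 2$. But $3a \leqslant 1 + x \leqslant 2$, contradicting this.

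The main obstacle is the asymmetry in the intersection hypotheses on $C$ and $Z$: after a single blow-up Theorem~\ref{theorem:adjunction} only yields $2a > 1 + b$, which is compatible with $a \leqslant \tfrac{1+x}{3}$ once $x$ is close to $1$, so a second blow-up is essential in order to upgrade this to $3a > 2$, which the bound $a \leqslant \tfrac{1+x}{3}$ does exclude. The only place requiring a touch of care is the verification of the hypothesis of Lemma~\ref{lemma:adjunction-8} in the tangent case $(C \cdot Z)_P = 2$, where the bound $a + b + m \leqslant 1 + \tfrac{x}{2}$ is used in its full strength.
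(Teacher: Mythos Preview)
Your proof is correct and follows the same two-blow-up architecture as the paper, invoking Lemmas~\ref{lemma:adjunction-7} and \ref{lemma:adjunction-8} and finishing with Theorem~\ref{theorem:adjunction} on $\widehat{C}$ to force $3a>2$.

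There is one structural difference worth noting. The paper eliminates the case $Q\notin\widetilde{Z}$ immediately after the \emph{first} blow-up: the one-blow-up inequality $1+a-2b>2-a-b$ gives $2a-b>1$, while the hypothesis $(D\cdot Z)_P\leqslant 1+b-2a$ together with $(D\cdot Z)_P\geqslant 0$ forces $2a-b\leqslant 1$, a contradiction. You instead carry this non-tangent case through the second blow-up and obtain $3a>2+b$, which of course also works. Your argument is thus slightly more uniform (one endgame handles both subcases), while the paper's is slightly shorter. Your closing remark that ``a second blow-up is essential'' because one blow-up ``only yields $2a>1+b$'' is therefore not accurate in the non-tangent subcase: you overlooked the constraint $1+b\geqslant 2a$ coming from nonnegativity of $(D\cdot Z)_P$, which the paper exploits. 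The second blow-up is genuinely needed only in the tangent case $(C\cdot Z)_P=2$.

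A small expository point: when you write ``by symmetry I take $O\in\widehat{C}$'', note that in the non-tangent subcase $Q\notin\widetilde{Z}$ this is not symmetry but necessity, since $\widehat{Z}\cap G=\varnothing$; the symmetry appeal is legitimate only in the tangent subcase, where $Q\in\widetilde{C}\cap\widetilde{Z}$ restores the $(a,C)\leftrightarrow(b,Z)$ symmetry. Either way your conclusion stands.
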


\begin{proof}
We have
$m\leqslant(D\cdot C)_{P}\leqslant 1+a-2b$ and $m\leqslant(D\cdot Z)_{P}\leqslant 1+b-2a$.
Then~$m+\frac{a+b}{2}\leqslant 1$.

Suppose that $(S,aC+bZ+D)$ is not log canonical at $P$. Let us seek for a contradiction.
We~may assume that \eqref{equation:log-pair-5} is not log canonical at $Q$.
Then $Q\in\widetilde{C}\cup\widetilde{Z}$ by Lemma~\ref{lemma:adjunction-7}.
Without loss of generality, we may assume that $\widetilde{C}$ contains $Q$. Then $\widetilde{Z}$ also contains $Q$.
Indeed, if $Q\not\in\widetilde{Z}$, then
$1+a-2b\geqslant(D\cdot C)_{P}\geqslant m+(\widetilde{D}\cdot\widetilde{C})_{Q}>2-a-b$
by Theorem~\ref{theorem:adjunction}.
But $1+b-2a\geqslant 0$. Thus, we have $Q=G\cap\widetilde{C}\cap\widetilde{Z}$, so that $(C\cdot Z)_P=2$.

We may assume that \eqref{equation:log-pair-6} is not log canonical at $O$. Then $O\in\widehat{C}\cup\widehat{Z}$ by Lemma~\ref{lemma:adjunction-8}.
In particular, we have $O\not\in\widehat{F}$.
Without loss of generality, we may assume that $O\in\widehat{C}$.
By Theorem~\ref{theorem:adjunction}, we have $1+a-2b-m-\widetilde{m}\geqslant(\widehat{D}\cdot\widehat{C})_O>1-(2a+2b+m+\widetilde{m}-2)$.
This gives $a>\frac{2}{3}$, which is impossible, since $a\leqslant 1+\frac{x}{2}\leqslant\frac{2}{3}$.
\end{proof}

\begin{lemma}
\label{lemma:local-8}
Suppose that $C$ and $Z$ are smooth at $P$, $(C\cdot Z)_P\leqslant 2$, and $a+b+m\leqslant\frac{4}{3}+\frac{x}{6}$.
Suppose also that $a\leqslant\frac{2}{3}$, $b\leqslant\frac{2}{3}$, $(D\cdot C)_{P}\leqslant\frac{2+x}{3}+a-2b$ and $(D\cdot Z)_{P}\leqslant \frac{2+x}{3}+b-2a$.
Then the log pair $(S, aC+bZ+D)$ is log canonical at $P$.
\end{lemma}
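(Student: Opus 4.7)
The proof will proceed in close parallel to that of Lemma~\ref{lemma:local-7}, performing the same three-step sequence of blowups at the point $P$ and at successive infinitely near points, then invoking Theorem~\ref{theorem:adjunction} to force an impossible lower bound on $a$. The only new ingredient is recalibrating the numerical estimates to fit the modified hypotheses $a,b\leqslant\tfrac{2}{3}$, $a+b+m\leqslant\tfrac{4}{3}+\tfrac{x}{6}$, and $(D\cdot C)_{P}\leqslant\tfrac{2+x}{3}+a-2b$, $(D\cdot Z)_{P}\leqslant\tfrac{2+x}{3}+b-2a$.

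First I would record the easy consequences of the hypotheses. Since $m\leqslant(D\cdot C)_{P}$ and $m\leqslant(D\cdot Z)_{P}$, adding gives $2m+(a+b)\leqslant\tfrac{2(2+x)}{3}$, hence $m\leqslant\tfrac{2+x}{3}\leqslant 1$. Together with $a+b+m\leqslant\tfrac{4}{3}+\tfrac{x}{6}\leqslant 2$ and $2a+2b+2m\leqslant\tfrac{8+x}{3}\leqslant 3$, this checks the arithmetic preconditions required to apply Lemmas~\ref{lemma:adjunction-7} and~\ref{lemma:adjunction-8}.

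Now assume $(S,aC+bZ+D)$ is not log canonical at $P$. Pass to the blowup $f$ and a point $Q\in F$ at which the log pair~\eqref{equation:log-pair-5} fails; by Lemma~\ref{lemma:adjunction-7} we must have $Q\in\widetilde{C}\cup\widetilde{Z}$, say $Q\in\widetilde{C}$. If $Q\notin\widetilde{Z}$, then Theorem~\ref{theorem:adjunction} applied along $\widetilde{C}$ gives
\[
\tfrac{2+x}{3}+a-2b\;\geqslant\;(D\cdot C)_{P}\;\geqslant\; m+(\widetilde{D}\cdot\widetilde{C})_{Q}\;>\;2-a-b,
\]
so $b<2a-\tfrac{4-x}{3}$; but the nonnegativity $(D\cdot Z)_{P}\geqslant 0$ gives $b\geqslant 2a-\tfrac{2+x}{3}$, and comparing these two inequalities forces $x>1$, a contradiction. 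Thus $Q\in\widetilde{C}\cap\widetilde{Z}\cap F$, which forces $(C\cdot Z)_{P}=2$ and also shows that $\widetilde{C}$ and $\widetilde{Z}$ meet $F$ and each other transversally at $Q$.

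Now blow up $Q$ and select a point $O\in G$ at which~\eqref{equation:log-pair-6} fails. Lemma~\ref{lemma:adjunction-8} yields $O\in\widehat{C}\cup\widehat{Z}$, and by the transversality just noted, $O\neq\widehat{F}\cap G$. Without loss of generality $O\in\widehat{C}$, and Theorem~\ref{theorem:adjunction} applied along $\widehat{C}$ (using that the coefficient of $G$ in~\eqref{equation:log-pair-6} is $2a+2b+m+\widetilde{m}-2$) produces
\[
\tfrac{2+x}{3}+a-2b\;\geqslant\;(D\cdot C)_{P}\;\geqslant\;m+\widetilde{m}+(\widehat{D}\cdot\widehat{C})_{O}\;>\;3-2a-2b,
\]
so $3a>3-\tfrac{2+x}{3}=\tfrac{7-x}{3}$, i.e.\ $a>\tfrac{7-x}{9}\geqslant\tfrac{2}{3}$ since $x\leqslant 1$. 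This contradicts $a\leqslant\tfrac{2}{3}$, completing the argument. The step I expect to require most care is the intermediate exclusion $Q\in\widetilde{Z}$, since it is the only place where both of the $(D\cdot C)_P$ and $(D\cdot Z)_P$ bounds (and the constraint $x\leqslant 1$) must be used simultaneously; the rest is routine bookkeeping modeled on Lemma~\ref{lemma:local-7}.
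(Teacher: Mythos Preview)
Your proof is correct and follows essentially the same route as the paper's: two blowups (at $P$, then at $Q$), with Lemmas~\ref{lemma:adjunction-7} and~\ref{lemma:adjunction-8} forcing the non--log-canonical point onto $\widetilde{C}\cup\widetilde{Z}$ and then $\widehat{C}\cup\widehat{Z}$, followed by Theorem~\ref{theorem:adjunction} along $\widehat{C}$ to obtain $a>\tfrac{7-x}{9}\geqslant\tfrac{2}{3}$. Your exclusion of the case $Q\notin\widetilde{Z}$ via ``$x>1$'' is just a reformulation of the paper's ``$2a-b>1$ versus $2a-b\leqslant 1$''; the only (harmless) slip is the phrase ``three-step sequence of blowups'' in your preamble, since only two are actually used.
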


\begin{proof}
We have $m\leqslant(D\cdot C)_{P}\leqslant \frac{2+x}{3}+a-2b$
and we have  $m\leqslant(D\cdot Z)_{P}\leqslant\frac{2+x}{3}+b-2a$.
Then $m+\frac{a+b}{2}\leqslant\frac{2+x}{3}\leqslant 1$, $m+a+b\leqslant \frac{4}{3}+\frac{x}{6}\leqslant\frac{3}{2}$ and $2a-b\leqslant 1$.

Suppose that $(S,aC+bZ+D)$ is not log canonical at $P$. Let us seek for a contradiction.
We~may assume that \eqref{equation:log-pair-5} is not log canonical at $Q$.
Then $Q\in\widetilde{C}\cup\widetilde{Z}$ by Lemma~\ref{lemma:adjunction-7}.
Without loss of generality, we may assume that $Q$ is contained in $\widetilde{C}$. Then~$Q\in\widetilde{C}\cap\widetilde{Z}$.
Indeed, if $\widetilde{Z}$ does not contain $Q$, then $\frac{2+x}{3}+a-2b\geqslant m+\big(\widetilde{D}\cdot\widetilde{C}\big)_{Q}>2-a-b$~by~Theorem~\ref{theorem:adjunction}.
The later inequality immediately leads to a contradiction, since $2a-b\leqslant 1$.

We may assume that \eqref{equation:log-pair-6} is not log canonical at $O$.
Then $O\in\widehat{C}\cup\widehat{Z}$ by Lemmas~\ref{lemma:adjunction-8}.
In particular, we have $O\not\in\widehat{F}$. Without loss of generality, we may assume that $O\in\widehat{C}$.
Then $\frac{2+x}{3}+a-2b-m-\widetilde{m}\geqslant(\widehat{D}\cdot\widehat{C})_O>1-(2a+2b+m+\widetilde{m}-2)$ by Theorem~\ref{theorem:adjunction}.
This gives $a>\frac{7-x}{9}$, which is impossible, since $a\leqslant\frac{2}{3}$.
\end{proof}

\section{The proof of main result}
\label{section:main}

Let $S$ be a smooth del Pezzo surface such that $K_S^2=1$.
Then $|-2K_S|$ is base point~free.
It is well-known that the linear system $|-2K_S|$ gives a double cover $S\to\mathbb{P}(1,1,2)$.
This~double cover induces an involution $\tau\in\mathrm{Aut}(S)$.

Let~$C$~be an irreducible curve in $S$ such that $C^2=-1$. Then $-K_{S}\cdot C=1$ and $C\cong\mathbb{P}^1$.
Put $\widetilde{C}=\tau(C)$. Then $\widetilde{C}^2=K_{S}\cdot\widetilde{C}=-1$ and $\widetilde{C}\cong\mathbb{P}^1$.
Moreover, we have $C+\widetilde{C}\sim -2K_S$.
Furthermore, the irreducible curve $\widetilde{C}$ is uniquely determined by this rational equivalence.
Since~$C\cdot(C+\widetilde{C})=-2K_S\cdot C=2$ and $C^2=-1$, we have $C\cdot\widetilde{C}=3$, so that $1\leqslant |C\cap\widetilde{C}|\leqslant 3$.

Fix $\lambda\in\mathbb{Q}$. Then $-K_S+\lambda C$ is ample $\iff$ $-\frac{1}{3}<\lambda<1$. Indeed, we have
\begin{equation}
\label{equation:second-curve}
-K_S+\lambda C\sim_{\mathbb{Q}}\frac{1}{2}\big(C+\widetilde{C}\big)+\lambda C=\Big(\frac{1}{2}+\lambda\Big)C+\frac{1}{2}\widetilde{C}\sim_{\mathbb{Q}}\big(1+2\lambda\big)\Big(-K_S-\frac{\lambda}{1+2\lambda}\widetilde{C}\Big).
\end{equation}
One the other hand, we have $(-K_S+\lambda C)\cdot C=1-\lambda$ and $(-K_S+\lambda C)\cdot\widetilde{C}=1-3\lambda$.

Note that Theorem~\ref{theorem:main} and \eqref{equation:second-curve} imply

\begin{corollary}
\label{corollary:main}
Suppose that $-\frac{1}{3}<\lambda<1$. If $|C\cap\widetilde{C}|\geqslant 2$, then
$$
\alpha\big(S,-K_S+\lambda C\big)=\left\{%
\aligned
&\mathrm{min}\Big(\frac{\alpha(X)}{1+2\lambda},2\Big)\ \mathrm{if}\ -\frac{1}{3}<\lambda<0,\\%
&\mathrm{min}\Big(\alpha(X),\frac{2}{1+2\lambda}\Big)\ \mathrm{if}\ 0\leqslant\lambda<1.\\%
\endaligned\right.
$$
Similarly, if $|C\cap\widetilde{C}|=1$, then
$$
\alpha\big(S,-K_S+\lambda C\big)=\left\{%
\aligned
&\mathrm{min}\Big(\frac{\alpha(X)}{1+2\lambda},\frac{4}{3+3\lambda}\Big)\ \mathrm{if}\ -\frac{1}{3}<\lambda<0,\\%
&\mathrm{min}\Big(\alpha(X),\frac{4}{3+3\lambda}\Big)\ \mathrm{if}\ 0\leqslant\lambda<1.\\%
\endaligned\right.
$$
\end{corollary}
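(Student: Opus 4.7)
The case $0\leqslant\lambda<1$ is nothing but the statement of Theorem~\ref{theorem:main}, so the work is entirely in the range $-\tfrac{1}{3}<\lambda<0$. The plan is to transfer the problem from the pair $(S,-K_S+\lambda C)$ to the pair $(S,-K_S+\lambda'\widetilde{C})$ for a suitable $\lambda'\in(0,1)$, then apply Theorem~\ref{theorem:main} to $\widetilde{C}$.

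First I would record the symmetry we need. The involution $\tau$ satisfies $\tau(C)=\widetilde{C}$ and $\tau^2=\mathrm{id}$, so $\tau(\widetilde{C})=C$; in particular $\widetilde{C}$ is a smooth irreducible rational curve with $\widetilde{C}^2=-1$, the curve corresponding to $\widetilde{C}$ under the construction of Theorem~\ref{theorem:main} is $C$ itself, and $|\widetilde{C}\cap C|=|C\cap\widetilde{C}|$. Hence Theorem~\ref{theorem:main} is applicable to $\widetilde{C}$ with the same value of $|C\cap\widetilde{C}|$.

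Next, fix $\lambda$ with $-\tfrac{1}{3}<\lambda<0$ and set
$$
\lambda'=-\frac{\lambda}{1+2\lambda}.
$$
Since $1+2\lambda>\tfrac{1}{3}>0$, one checks $0<\lambda'<1$, so by Theorem~\ref{theorem:main} the divisor $-K_S+\lambda'\widetilde{C}$ is ample, and \eqref{equation:second-curve} rewrites as
$$
-K_S+\lambda C\;\sim_{\mathbb{Q}}\;(1+2\lambda)\bigl(-K_S+\lambda'\widetilde{C}\bigr).
$$
Using the scaling property $\alpha(S,cL)=\alpha(S,L)/c$ for $c>0$, one then obtains
$$
\alpha\bigl(S,-K_S+\lambda C\bigr)=\frac{\alpha\bigl(S,-K_S+\lambda'\widetilde{C}\bigr)}{1+2\lambda}.
$$
Now I would apply Theorem~\ref{theorem:main} to $\widetilde{C}$ (in place of $C$) with parameter $\lambda'$, yielding either $\min(\alpha(S),\tfrac{2}{1+2\lambda'})$ or $\min(\alpha(S),\tfrac{4}{3+3\lambda'})$ according as $|C\cap\widetilde{C}|\geqslant 2$ or $=1$.

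Finally, the corollary reduces to a brief algebraic simplification. One computes
$$
(1+2\lambda)(1+2\lambda')=1,\qquad (1+2\lambda)\cdot 3(1+\lambda')=3(1+\lambda),
$$
so that
$$
\frac{1}{1+2\lambda}\cdot\frac{2}{1+2\lambda'}=2,\qquad \frac{1}{1+2\lambda}\cdot\frac{4}{3+3\lambda'}=\frac{4}{3+3\lambda},
$$
and dividing by $1+2\lambda$ turns $\alpha(S)$ into $\alpha(S)/(1+2\lambda)$. Combining these with the min gives exactly the formulas claimed for $-\tfrac{1}{3}<\lambda<0$. I do not expect any serious obstacle here; once the identification $\tau(\widetilde{C})=C$ is noted and \eqref{equation:second-curve} is solved for $\lambda'$, the argument is a direct substitution into Theorem~\ref{theorem:main}, the only mildly delicate point being to verify that $\lambda'$ really lands in $(0,1)$ throughout the relevant range of $\lambda$.
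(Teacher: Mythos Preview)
Your argument is correct and is exactly the route the paper intends: the paper states only that Theorem~\ref{theorem:main} together with \eqref{equation:second-curve} imply the corollary, and your substitution $\lambda'=-\lambda/(1+2\lambda)$, the scaling identity $\alpha(S,cL)=\alpha(S,L)/c$, and the observation that the companion curve of $\widetilde{C}$ is $C$ are precisely the details the paper leaves to the reader. Your verifications that $\lambda'\in(0,1)$ and that $(1+2\lambda)(1+2\lambda')=1$, $(1+2\lambda)(1+\lambda')=1+\lambda$ are all correct.
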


Now let us prove Theorem~\ref{theorem:main}. Suppose that $0\leqslant \lambda<1$. Put
\begin{equation}
\label{equation:inequality}
\mu=\left\{%
\aligned
&\mathrm{min}\Big(\alpha(S),\frac{2}{1+2\lambda}\Big)\ \mathrm{when}\ |C\cap\widetilde{C}|\geqslant 2,\\%
&\mathrm{min}\Big(\alpha(S),\frac{4}{3+3\lambda}\Big)\ \mathrm{when}\ |C\cap\widetilde{C}|=1.\\%
\endaligned\right.
\end{equation}

\begin{lemma}
\label{lemma:1}
One has $\alpha(S,-K_S+\lambda C)\leqslant\mu$.
\end{lemma}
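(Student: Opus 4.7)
\medskip

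\noindent\textbf{Proof plan.} The strategy is to exhibit, for each of the two quantities appearing in the minimum defining $\mu$, an effective $\mathbb{Q}$-divisor $D \sim_{\mathbb{Q}} -K_S + \lambda C$ whose log canonical threshold does not exceed that quantity; since $\alpha(S, -K_S + \lambda C) \leqslant \mathrm{lct}(S, D)$ for every such $D$, this will yield the result.

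For the bound $\alpha(S, -K_S + \lambda C) \leqslant \alpha(S)$, I would argue by monotonicity. For every $\varepsilon > 0$, the definition of $\alpha(S)$ supplies an effective $D_0 \sim_{\mathbb{Q}} -K_S$ such that $(S, (\alpha(S) + \varepsilon) D_0)$ is not log canonical. Since $\lambda \geqslant 0$, the divisor $D = D_0 + \lambda C$ is effective, $\mathbb{Q}$-linearly equivalent to $-K_S + \lambda C$, and satisfies $D \geqslant D_0$; consequently $(S, (\alpha(S) + \varepsilon) D)$ is also not log canonical, so $\alpha(S, -K_S + \lambda C) \leqslant \alpha(S) + \varepsilon$, and letting $\varepsilon \to 0$ concludes.

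For the second bound I would use the decomposition \eqref{equation:second-curve}, taking $D = (\tfrac{1}{2} + \lambda) C + \tfrac{1}{2} \widetilde{C}$. When $|C \cap \widetilde{C}| \geqslant 2$, the equality $C \cdot \widetilde{C} = 3$ forces at least one intersection point $P$ to have $(C \cdot \widetilde{C})_P = 1$, i.e.\ to be a simple normal crossing of $C + \widetilde{C}$; log canonicity of $(S, tD)$ at such a $P$ reduces to $t(\tfrac{1}{2} + \lambda) \leqslant 1$, giving $\mathrm{lct}(S, D) \leqslant \tfrac{2}{1+2\lambda}$. When $|C \cap \widetilde{C}| = 1$, the unique intersection point has $(C \cdot \widetilde{C})_P = 3$ with both branches smooth, so in suitable local coordinates one has $C = \{y = 0\}$ and $\widetilde{C} = \{y = x^3\}$. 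I would resolve by three successive blowups: at $P$, at the triple point $C_1 \cap \widetilde{C}_1 \cap F_1$ on the first exceptional curve, and at the triple point $C_2 \cap \widetilde{C}_2 \cap G$ on the second exceptional curve. Computing total transforms gives $\pi^*C = C_3 + F_3 + 2 G_3 + 3 H$ and $\pi^*\widetilde{C} = \widetilde{C}_3 + F_3 + 2 G_3 + 3 H$, together with $K_{S_3} = \pi^* K_S + F_3 + 2 G_3 + 3 H$, so the coefficients of $F_3, G_3, H$ in the log pullback of $aC + b\widetilde{C}$ are $a+b-1$, $2(a+b-1)$, and $3(a+b-1)$ respectively. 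The strictly binding log canonicity condition $3(a+b-1) \leqslant 1$, specialized to $a = t(\tfrac{1}{2} + \lambda)$ and $b = \tfrac{t}{2}$, yields $\mathrm{lct}(S, D) \leqslant \tfrac{4}{3+3\lambda}$.

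The main obstacle is the three-blowup analysis in the case $|C \cap \widetilde{C}| = 1$: one must verify that after each of the first two blowups the strict transforms of $C$, $\widetilde{C}$, and the newly created exceptional curve all pass through a single common point with pairwise transverse tangents, so that a further blowup is genuinely required, and then confirm that it is the last exceptional $H$ (rather than an earlier one) that provides the binding constraint. Everything else is a formal consequence of the definition of $\alpha(S, L)$ and of \eqref{equation:second-curve}.
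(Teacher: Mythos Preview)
Your approach is essentially the same as the paper's: both use the test divisor $D=(\tfrac{1}{2}+\lambda)C+\tfrac{1}{2}\widetilde{C}$ from \eqref{equation:second-curve} for the bounds $\tfrac{2}{1+2\lambda}$ and $\tfrac{4}{3+3\lambda}$, and a monotonicity argument for the bound $\alpha(S)$. The paper simply asserts that $\bigl(S,\tfrac{2+4\lambda}{3+3\lambda}C+\tfrac{2}{3+3\lambda}\widetilde{C}\bigr)$ is not Kawamata log terminal at the unique point of $C\cap\widetilde{C}$, whereas you actually carry out the three-blowup resolution and read off the discrepancy $3(a+b-1)$ along the last exceptional $H$; this is exactly the computation underlying the paper's claim.

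One small inaccuracy in your description of the obstacle: after the \emph{first} blowup the strict transforms $C_1$ and $\widetilde{C}_1$ are still tangent (their local intersection number drops from $3$ to $2$), so the three curves $C_1,\widetilde{C}_1,F_1$ do meet at a common point but are not pairwise transverse there; it is only after the \emph{second} blowup that $C_2,\widetilde{C}_2,G$ become three pairwise transverse branches. This does not affect your discrepancy computation, which only uses that each curve is smooth (multiplicity one) at the centre and which curves pass through it, and your formulas $\pi^*C=C_3+F_3+2G_3+3H$, $K_{S_3}=\pi^*K_S+F_3+2G_3+3H$, and the resulting coefficient $3(a+b-1)$ on $H$ are all correct.
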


\begin{proof}
Since we have $(\frac{1}{2}+\lambda)C+\frac{1}{2}\widetilde{C}\sim_{\mathbb{Q}} -K_S+\lambda C$,
we see that $\alpha(S,-K_S+\lambda C)\leqslant\frac{2}{1+2\lambda}$.
Similarly, we see that $\alpha(S,-K_S+\lambda C)\leqslant\alpha(S)$.
If $|C\cap\widetilde{C}|=1$, then the log pair
$$
\Bigg(S,\frac{2+4\lambda}{3+3\lambda}C+\frac{2}{4+3\lambda}\widetilde{C}\Bigg)
$$
is not Kawamata log terminal at the point $C\cap\widetilde{C}$, so that $\alpha(S,-K_S+\lambda C)\leqslant\frac{4}{3+3\lambda}$.
\end{proof}

Thus, to complete the proof of Theorem~\ref{theorem:main}, we have to show that $\alpha(S,-K_S+\lambda C)\geqslant\mu$.
Suppose that $\alpha(S,-K_S+\lambda C)<\mu$. Let us seek for a contradiction.

Since $\alpha(S,-K_S+\lambda C)<\mu$, there exists an effective $\mathbb{Q}$-divisor $D$ on $S$ such that
$$
D\sim_{\mathbb{Q}} -K_S+\lambda C,
$$
and $(S,\mu D)$ is not log canonical at some point $P\in S$.

By Lemma~\ref{lemma:convexity} and \eqref{equation:second-curve}, we may assume that $\mathrm{Supp}(D)$ does not contain $C$ or $\widetilde{C}$.
Indeed, one can check that the log pair $(S,\mu(\frac{1}{2}+\lambda)C+\frac{\mu}{2}\widetilde{C})$ is log canonical at $P$.

Let $\mathcal{C}$ be a curve in the pencil $|-K_S|$ that passes through $P$. Then $\mathcal{C}+\lambda C\sim -K_S+\lambda C$.
Moreover, the curve $\mathcal{C}$ is irreducible, and the log pair $(S,\mu \mathcal{C}+\mu\lambda C)$ is log canonical at~$P$.
Thus, we may assume that $\mathrm{Supp}(D)$ does not contain $C$ or $\mathcal{C}$ by Lemma~\ref{lemma:convexity}.

\begin{lemma}
\label{lemma:2}
The curve $\mathcal{C}$ is smooth at the point $P$.
\end{lemma}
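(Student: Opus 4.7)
The plan is to argue by contradiction, assuming that $\mathcal{C}$ is singular at $P$, and combine Theorem~\ref{theorem:Skoda} with the global intersection bound $\mathcal{C}\cdot D=1+\lambda$. This intersection number comes from $\mathcal{C}\sim -K_{S}$ together with $(-K_{S})^{2}=1$ and $-K_{S}\cdot C=1$ (the latter since $C$ is a $(-1)$-curve), giving
\[
\mathcal{C}\cdot D=\mathcal{C}\cdot(-K_{S}+\lambda C)=1+\lambda.
\]

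First I would use that $\mathcal{C}$ is irreducible and, by the reduction just performed via Lemma~\ref{lemma:convexity}, not contained in $\mathrm{Supp}(D)$, so that the local intersection number $(\mathcal{C}\cdot D)_{P}$ is finite and bounded above by the global one. Assuming $\mathcal{C}$ is singular at $P$, we have $\mathrm{mult}_{P}(\mathcal{C})\geqslant 2$, and therefore
\[
2\,\mathrm{mult}_{P}(D)\leqslant\mathrm{mult}_{P}(\mathcal{C})\,\mathrm{mult}_{P}(D)\leqslant\big(\mathcal{C}\cdot D\big)_{P}\leqslant 1+\lambda,
\]
which forces $\mathrm{mult}_{P}(D)\leqslant (1+\lambda)/2$.

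Next I would apply Theorem~\ref{theorem:Skoda} to the pair $(S,\mu D)$, which is not log canonical at $P$ by hypothesis, to get $\mu\,\mathrm{mult}_{P}(D)>1$. Combined with the previous inequality this yields $\mu>2/(1+\lambda)$. However, the hypothesis $\lambda<1$ forces $2/(1+\lambda)>1$, whereas $\mu\leqslant\alpha(S)\leqslant 1$ by the definition \eqref{equation:inequality} of $\mu$ and Theorem~\ref{theorem:Cheltsov} (which gives $\alpha(S)\in\{5/6,1\}$ when $K_{S}^{2}=1$). This is the desired contradiction.

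I do not expect a serious obstacle: the argument rests entirely on Skoda's multiplicity bound and the tautological inequality $\mathrm{mult}_{P}(\mathcal{C})\mathrm{mult}_{P}(D)\leqslant(\mathcal{C}\cdot D)_{P}$. The only point requiring care is that the contradiction needs a strict inequality $\mu\leqslant 1<2/(1+\lambda)$; this is supplied precisely by the ampleness hypothesis $\lambda<1$. Notably, none of the eight local lemmas of Section~\ref{section:eight-lemmas} is needed here, which is consistent with the role of this lemma as a preliminary reduction preparing the way for the deeper case analysis to follow.
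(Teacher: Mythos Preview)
Your argument is correct only in the case $\mathcal{C}\not\subseteq\mathrm{Supp}(D)$, and there it coincides with the paper's opening step. The gap is in the phrase ``by the reduction just performed via Lemma~\ref{lemma:convexity}, not contained in $\mathrm{Supp}(D)$''. The reduction preceding the lemma applies Lemma~\ref{lemma:convexity} with $D'=\mathcal{C}+\lambda C$; its conclusion is $\mathrm{Supp}(D')\not\subseteq\mathrm{Supp}(D'')$, i.e.\ that \emph{at least one} of the irreducible components $\mathcal{C}$, $C$ is absent from $\mathrm{Supp}(D)$. For $\lambda>0$ there is no divisor $\sim_{\mathbb{Q}}-K_S+\lambda C$ whose support is the single curve $\mathcal{C}$, so you cannot force $\mathcal{C}\not\subseteq\mathrm{Supp}(D)$ by convexity alone. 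The paper's phrasing ``does not contain $C$ or $\mathcal{C}$'' is to be read disjunctively, as the proof itself confirms by treating the case $\mathcal{C}\subseteq\mathrm{Supp}(D)$ (whence $C\not\subseteq\mathrm{Supp}(D)$) separately.

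That second case is where the real work lies and is precisely where two of the eight local lemmas enter. Writing $D=\epsilon\mathcal{C}+\Delta$ with $\mathcal{C},C\not\subseteq\mathrm{Supp}(\Delta)$, one bounds $\epsilon\leqslant 1-\lambda$ from $C\cdot D=1-\lambda$ and $(\mathcal{C}\cdot\Delta)_P\leqslant 1+\lambda-\epsilon$; then Lemma~\ref{lemma:local-1} (for $\lambda>\tfrac12$, and again with rescaling for cuspidal $\mathcal{C}$ when $\lambda\leqslant\tfrac12$) and Lemma~\ref{lemma:local-2} (for nodal $\mathcal{C}$ when $\lambda\leqslant\tfrac12$) deliver the contradiction. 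So your closing remark that none of the Section~\ref{section:eight-lemmas} lemmas is needed is exactly backwards: this lemma already requires Lemmas~\ref{lemma:local-1} and~\ref{lemma:local-2}.
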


\begin{proof}
Suppose that $\mathcal{C}$ is singular at $P$.
If $\mathcal{C}\not\subseteq\mathrm{Supp}(D)$, then Theorem~\ref{theorem:Skoda} gives
$$
1+\lambda=\mathcal{C}\cdot\Big(-K_S+\lambda C\Big)=\mathcal{C}\cdot D\geqslant
\mathrm{mult}_{P}\big(\mathcal{C}\big)\mathrm{mult}_{P}\big(D\big)\geqslant 2\mathrm{mult}_{P}\big(D\big)>\frac{2}{\mu},
$$
which is impossible by \eqref{equation:inequality}.
Thus, we have $\mathcal{C}\subseteq\mathrm{Supp}(D)$. Then $C\not\subseteq\mathrm{Supp}(D)$.

Write $D=\epsilon\mathcal{C}+\Delta$, where $\epsilon$ is a positive rational number, and $\Delta$ is an effective $\mathbb{Q}$-divisor on the surface $S$ whose support does not contain the curves $\mathcal{C}$ and $C$.
Then
$$
1-\lambda=C\cdot\Big(-K_S+\lambda C\Big)=C\cdot D=C\cdot\Big(\epsilon\mathcal{C}+\Delta\Big)=\epsilon+C\cdot\Delta\geqslant \epsilon,
$$
so that $\epsilon\leqslant 1-\lambda$. Similarly, we have
\begin{equation}
\label{equation:2}
1+\lambda-\epsilon=\mathcal{C}\cdot\Delta\geqslant\big(\mathcal{C}\cdot\Delta\big)_{P}.
\end{equation}

We claim that $\lambda\leqslant\frac{1}{2}$. Indeed, suppose that $\lambda>\frac{1}{2}$. Then it follows from \eqref{equation:2} that
$$
\big(\Delta\cdot\mathcal{C}\big)_{P}\leqslant 1+\lambda-\epsilon=\frac{1+2\lambda}{2}\Bigg(\frac{4}{3}+\frac{\frac{4-4\lambda}{1+2\lambda}}{6}-\frac{2}{1+2\lambda}\epsilon\Bigg).
$$
Thus, we can apply Lemma~\ref{lemma:local-1} to the log pair $(S,\frac{2}{1+2\lambda}D)$ with $x=\frac{4-4\lambda}{1+2\lambda}$ and $a=\frac{2}{1+2\lambda}\epsilon$.
This implies that $(S,\frac{2}{1+2\lambda}D)$ is log canonical at $P$, which is impossible, because $\mu\leqslant\frac{2}{1+2\lambda}$. 

If $\mathcal{C}$ has a node at $P$,
then we can apply Lemma~\ref{lemma:local-2} to $(S,D)$ with $x=2\lambda$ and $a=\epsilon$.
This~implies that $(S,D)$ is log canonical, which is absurd, since $\mu\leqslant 1$.

Therefore, the curve $\mathcal{C}$ has an ordinary cusp at $P$ and $\lambda\leqslant\frac{1}{2}$.
Then $\mu\leqslant\alpha(S)=\frac{5}{6}$.
Thus, we can apply Lemma~\ref{lemma:local-1} to the log pair $(S,\frac{5}{6}D)$ with $x=\frac{5}{3}\lambda$ and $a=\frac{5}{6}\epsilon$, since
$$
\big(\Delta\cdot\mathcal{C}\big)_{P}\leqslant\frac{6}{5}\Bigg(\frac{5}{6}+\frac{5}{6}\lambda-\frac{5}{6}\epsilon\Bigg).
$$
This implies that $(S,\frac{5}{6}D)$ is log canonical at $P$, which is impossible, since $\mu\leqslant\frac{5}{6}$.
\end{proof}

The next step in the proof of Theorem~\ref{theorem:main} is

\begin{lemma}
\label{lemma:3}
The point $P$ is not contained in the curve $C$.
\end{lemma}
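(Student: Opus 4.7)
The plan is to reach a contradiction by a single application of Theorem~\ref{theorem:adjunction}, with no blowups required. First I will compute the global intersection
\[
C\cdot D = C\cdot\bigl(-K_S+\lambda C\bigr) = 1-\lambda,
\]
which is a non-negative rational number since $\lambda<1$. The reduction carried out just before Lemma~\ref{lemma:2} ensures that $\mathrm{Supp}(D)$ does not contain the curve $C$, so the local intersection number $(D\cdot C)_P$ is bounded above by the global one: $(D\cdot C)_P\leqslant 1-\lambda$.

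Next I will multiply by $\mu$ to obtain $(\mu D\cdot C)_P\leqslant\mu(1-\lambda)$. From the definition \eqref{equation:inequality} we have $\mu\leqslant\alpha(S)$, and Theorem~\ref{theorem:Cheltsov} gives $\alpha(S)\leqslant 1$ for every smooth del Pezzo surface; combined with $\lambda\geqslant 0$ this yields $(\mu D\cdot C)_P\leqslant 1$. Since $C$ is a smooth $(-1)$-curve, it is smooth at $P$. Theorem~\ref{theorem:adjunction}, applied to the log pair $(S,\mu D)=(S,0\cdot C+\mu D)$, will then force $(S,\mu D)$ to be log canonical at $P$, contradicting the standing assumption and completing the proof.

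Unlike the proof of Lemma~\ref{lemma:2}, no case analysis on singularities of the auxiliary curve is needed here, because the $(-1)$-curve $C$ is automatically smooth, and no case split on whether $C\subseteq\mathrm{Supp}(D)$ is needed, since that possibility has already been excluded by the reduction. There is therefore no serious obstacle; the only thing to verify is the bound $\mu(1-\lambda)\leqslant 1$, which is immediate from $\mu\leqslant 1$ and $\lambda\geqslant 0$ and applies uniformly across both candidate values of $\mu$ in \eqref{equation:inequality}.
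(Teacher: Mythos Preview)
Your argument covers only the easy case and misreads the reduction preceding Lemma~\ref{lemma:2}. That reduction, obtained from Lemma~\ref{lemma:convexity} applied to the divisor $\bigl(\tfrac{1}{2}+\lambda\bigr)C+\tfrac{1}{2}\widetilde{C}$, guarantees only that $\mathrm{Supp}(D)$ fails to contain \emph{at least one} of $C$ and $\widetilde{C}$ (this is exactly what the conclusion $\mathrm{Supp}(D')\not\subseteq\mathrm{Supp}(D'')$ of Lemma~\ref{lemma:convexity} says when $\mathrm{Supp}(D')=C\cup\widetilde{C}$); it does \emph{not} let you assume $C\not\subseteq\mathrm{Supp}(D)$. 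The same applies to the second reduction with $\mathcal{C}$ in place of $\widetilde{C}$. Consequently the case $C\subseteq\mathrm{Supp}(D)$ is still live, and your proof says nothing about it.

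The paper's proof confirms this reading: it explicitly splits into the two cases. When $C\not\subseteq\mathrm{Supp}(D)$ it argues essentially as you do (via Theorem~\ref{theorem:Skoda} rather than Theorem~\ref{theorem:adjunction}, but the content is the same). The substantive work is the remaining case $C\subseteq\mathrm{Supp}(D)$: one writes $D=\epsilon C+\Delta$, uses intersections with $\mathcal{C}$, $C$, and $\widetilde{C}$ to bound $\epsilon$, $\mathrm{mult}_P(\Delta)$, and $(C\cdot\Delta)_P$, and then invokes Lemma~\ref{lemma:local-3} (for $\lambda\leqslant\tfrac12$) or Lemma~\ref{lemma:local-4} (for $\lambda>\tfrac12$) to force $(S,\mu D)$ to be log canonical at $P$. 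Your proposal needs this entire second half.
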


\begin{proof}
Suppose that $P\in C$. Let us seek for a contradiction.
If $C\not\subseteq\mathrm{Supp}(D)$, then
$$
1-\lambda=C\cdot\Big(-K_S+\lambda C\Big)=C\cdot D\geqslant
\mathrm{mult}_{P}\big(C\big)\mathrm{mult}_{P}\big(D\big)\geqslant\mathrm{mult}_{P}\big(D\big)>\frac{1}{\mu}
$$
by Theorem~\ref{theorem:Skoda}. 
But \eqref{equation:inequality} implies that $\mu>\frac{1}{1-\lambda}$, which is impossible, because $\mu\leqslant 1$.
Therefore, we must have $C\subseteq\mathrm{Supp}(D)$. Then $\mathcal{C}\not\subseteq\mathrm{Supp}(D)$ and also $\widetilde{C}\not\subseteq\mathrm{Supp}(D)$.

Write $D=\epsilon C+\Delta$, where $\epsilon$ is a positive rational number,
and $\Delta$ is an effective divisor whose support does not contain $\mathcal{C}$, $C$ and $\widetilde{C}$.
Then
$1+\lambda-\epsilon=\mathcal{C}\cdot\Delta\geqslant\mathrm{mult}_{P}(\Delta)$.
Similarly, we have $1+3\lambda-3\epsilon=\widetilde{C}\cdot\Delta\geqslant 0$.
Finally, we have $1-\lambda+\epsilon=C\cdot\Delta\geqslant(C\cdot\Delta)_{P}$.

If $\lambda\leqslant\frac{1}{2}$, we can apply Lemma~\ref{lemma:local-3} to the log pair $(S,D)$ with $x=2\lambda$ and $a=\epsilon$.
This~implies that $(S,D)$ is log canonical, which is impossible since $\mu\leqslant 1$.

Therefore, we have $\lambda>\frac{1}{2}$.
Since $\epsilon\leqslant \frac{1}{3}+\lambda$, we have $\frac{2}{1+2\lambda}\epsilon\leqslant\frac{2}{1+2\lambda}(\frac{1}{3}+\lambda)=\frac{8}{9}-\frac{\frac{4-4\lambda}{1+2\lambda}}{18}$.
Since $\epsilon+\mathrm{mult}_{P}(\Delta)\leqslant 1+\lambda$, we have
$\frac{2}{1+2\lambda}\epsilon+\frac{2}{1+2\lambda}\mathrm{mult}_{P}(\Delta)\leqslant \frac{2}{1+2\lambda}(1+\lambda)=\frac{4}{3}+\frac{\frac{4-4\lambda}{1+2\lambda}}{6}$.~But
$$
\big(\Delta\cdot C\big)_{P}\leqslant 1-\lambda+\epsilon=\frac{1+2\lambda}{2}\Bigg(\frac{\frac{4-4\lambda}{1+2\lambda}}{2}+\frac{2}{1+2\lambda}\epsilon\Bigg).
$$
Thus, we can apply Lemma~\ref{lemma:local-4} to the log pair $(S,\frac{2}{1+2\lambda}D)$ with $x=\frac{4-4\lambda}{1+2\lambda}$ and $a=\frac{2}{1+2\lambda}\epsilon$.
This implies that  $(S,\frac{2}{1+2\lambda}D)$ is log canonical at~$P$, which is impossible, since $\mu\leqslant\frac{2}{1+2\lambda}$.
\end{proof}

Let $h\colon S\to\overline{S}$ be the contraction of the curve $C$.
Put $\overline{D}=h(D)$. Then $\overline{D}\sim_{\mathbb{Q}} -K_{\overline{S}}$.
Moreover, it follows from Lemma~\ref{lemma:3} that $(\overline{S},\mu\overline{D})$ is not log canonical at the point $h(P)$.

By construction, the surface $\overline{S}$ is a smooth del Pezzo surface such that $K_{\overline{S}}^2=K_{S}^2+1=2$.
Then $|-K_{\overline{S}}|$ gives a double cover $\pi\colon\overline{S}\to\mathbb{P}^2$ branched in a smooth quartic curve $R_4\subset\mathbb{P}^2$.
By Lemma~\ref{lemma:dP2}, there exists a unique curve $\overline{Z}\in|-K_{\overline{S}}|$ such that $\overline{Z}$ is singular~at~$h(P)$.
Moreover, the log pair $(\overline{S},\overline{Z})$ is not log canonical at the point $h(P)$ by \cite[Theorem~1.12]{CheltsovParkWon1}.
Note that $\pi(\overline{Z})$ is the line in $\mathbb{P}^2$ that is tangent to the curve $R_4$ at the point $\pi\circ h(P)$.

Let $Z$ be the proper transform of the curve $\overline{Z}$ on the surface $S$. Then $h(C)\not\in\overline{Z}$. 
Indeed,~if~$h(C)$ is contained in $\overline{Z}$, then $Z\sim -K_{S}$, which is impossible by Lemma~\ref{lemma:2}.
Thus, we see that $C\cap Z=\varnothing$. Then $Z\sim -K_{S}+C$.

\begin{lemma}
\label{lemma:4}
The curve $Z$ is reducible.
\end{lemma}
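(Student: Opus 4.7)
The plan is to assume that $Z$ is irreducible and derive a contradiction from the fact that $(S,\mu D)$ is not log canonical at $P$. First I record the numerics. Since $Z\sim -K_S+C$, a direct computation gives $Z^2=(-K_S+C)^2=K_S^2-2K_S\cdot C+C^2=1+2-1=2$ and $D\cdot Z=(-K_S+\lambda C)\cdot(-K_S+C)=1+1+\lambda-\lambda=2$, and adjunction yields $p_a(Z)=1$. Because $P\notin C$ by Lemma~\ref{lemma:3}, the morphism $h$ is an isomorphism near $P$, so $(S,Z)$ is not log canonical at $P$; in particular $Z$ is singular at $P$. An irreducible curve of arithmetic genus one with a singular point has $\delta$-invariant exactly one, so the singularity must be a node or a cusp; since nodes are log canonical with coefficient one, the only possibility consistent with non-log-canonicity is that $Z$ has an ordinary cusp at $P$ and is smooth elsewhere.

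Suppose first that $Z\not\subseteq\mathrm{Supp}(D)$. By Theorem~\ref{theorem:Skoda} we have $\mathrm{mult}_P(D)>1/\mu\geqslant 1$, and since $\mathrm{mult}_P(Z)=2$,
\[
2=D\cdot Z\geqslant(D\cdot Z)_P\geqslant\mathrm{mult}_P(D)\cdot\mathrm{mult}_P(Z)>\frac{2}{\mu}\geqslant 2,
\]
which is absurd. Suppose instead that $Z\subseteq\mathrm{Supp}(D)$, and write $D=\epsilon Z+\Delta$ with $\epsilon>0$ and $Z\not\subseteq\mathrm{Supp}(\Delta)$. Since we have already arranged that $C$, $\widetilde{C}$, and $\mathcal{C}$ are not in $\mathrm{Supp}(D)$, none of them lies in $\mathrm{Supp}(\Delta)$ either. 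Intersecting with $\widetilde{C}$ (using $Z\cdot\widetilde{C}=(-K_S+C)\cdot\widetilde{C}=1+3=4$) gives
\[
0\leqslant\Delta\cdot\widetilde{C}=(1+3\lambda)-4\epsilon,
\]
so $\epsilon\leqslant(1+3\lambda)/4$. A short check using \eqref{equation:inequality} shows that in both regimes $|C\cap\widetilde{C}|\geqslant 2$ and $|C\cap\widetilde{C}|=1$ one has $\mu\epsilon\leqslant 2/3$; indeed the functions $\lambda\mapsto(1+3\lambda)/(2(1+2\lambda))$ and $\lambda\mapsto(1+3\lambda)/(3(1+\lambda))$ are both increasing on $[0,1)$ with supremum $2/3$.

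I then apply Lemma~\ref{lemma:local-5} to the log pair $(S,\mu\epsilon Z+\mu\Delta)=(S,\mu D)$, taking the role of the curve~$C$ in that lemma to be $Z$ (which has a cusp at $P$), and setting $a=\mu\epsilon$ and $x=1$. The hypothesis $a\leqslant (1+x)/3=2/3$ holds by the paragraph above, and the intersection bound reduces to
\[
(\mu\Delta\cdot Z)_P\leqslant\mu\,\Delta\cdot Z=\mu(2-2\epsilon)\leqslant 2-2\mu\epsilon,
\]
which is exactly $\mu\leqslant 1$. The lemma then asserts that $(S,\mu D)$ is log canonical at $P$, contradicting our standing assumption and completing the proof. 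The only delicate step is verifying the uniform bound $\mu\epsilon\leqslant 2/3$; once the intersection with $\widetilde{C}$ is in hand this is a direct numerical check, so the main obstacle is recognizing which of the eight local lemmas of Section~\ref{section:eight-lemmas} has precisely the hypotheses that the available intersection numbers provide.
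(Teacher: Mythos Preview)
There is a genuine gap. Your key numerical input is the bound $\epsilon\leqslant(1+3\lambda)/4$, obtained from $\Delta\cdot\widetilde{C}\geqslant 0$; this requires $\widetilde{C}\not\subseteq\mathrm{Supp}(D)$. You assert that ``we have already arranged that $C$, $\widetilde{C}$, and $\mathcal{C}$ are not in $\mathrm{Supp}(D)$'', but the reductions made before Lemma~\ref{lemma:2} do \emph{not} give this. Each application of Lemma~\ref{lemma:convexity} with a divisor $D'$ only guarantees that $\mathrm{Supp}(D')\not\subseteq\mathrm{Supp}(D)$, i.e.\ that \emph{at least one} component of $D'$ is absent from $\mathrm{Supp}(D)$. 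Concretely, the two reductions yield the disjunctions ``$C\not\subseteq\mathrm{Supp}(D)$ or $\widetilde{C}\not\subseteq\mathrm{Supp}(D)$'' and ``$C\not\subseteq\mathrm{Supp}(D)$ or $\mathcal{C}\not\subseteq\mathrm{Supp}(D)$''; these are used in the proofs of Lemmas~\ref{lemma:2} and~\ref{lemma:3} only after assuming $\mathcal{C}\subseteq\mathrm{Supp}(D)$ or $C\subseteq\mathrm{Supp}(D)$, which then forces the other alternative. At the point you are working, you only know $Z\subseteq\mathrm{Supp}(D)$, and since $Z\ne C$ this tells you nothing about whether $\widetilde{C}$ lies in the support. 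So your bound on $\epsilon$ is unjustified.

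The paper avoids this by introducing $\widetilde{Z}=\tau(Z)$ and applying Lemma~\ref{lemma:convexity} \emph{inside} the proof with $D'=\frac{3\lambda+1}{4}Z+\frac{1-\lambda}{4}\widetilde{Z}\sim_{\mathbb{Q}}-K_S+\lambda C$; checking that $(S,\mu D')$ is log canonical at $P$ then lets one assume $\widetilde{Z}\not\subseteq\mathrm{Supp}(D)$ (the alternative $Z\not\subseteq\mathrm{Supp}(D)$ being already handled). Intersecting with $\widetilde{Z}$ gives $\epsilon\leqslant(1+2\lambda)/3$. Incidentally, with \emph{this} bound your single application of Lemma~\ref{lemma:local-5} with $x=1$ also goes through, since $\mu\epsilon\leqslant\frac{2}{1+2\lambda}\cdot\frac{1+2\lambda}{3}=\frac{2}{3}$ and $(\mu\Delta\cdot Z)_P\leqslant\mu(2-2\epsilon)\leqslant 2-2\mu\epsilon$; so your idea of using Lemma~\ref{lemma:local-5} once with $x=1$ is in fact a clean shortcut around the paper's $\lambda\leqslant\frac12$ versus $\lambda>\frac12$ case split --- but only after the bound on $\epsilon$ is correctly established. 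A minor aside: your reduction to the cusp case is unnecessary (Lemma~\ref{lemma:local-5} covers nodes too) and leans on the paper's assertion that $(\overline{S},\overline{Z})$ is not log canonical, which you need not invoke.
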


\begin{proof}
Suppose that $Z$ is irreducible. Then $Z$ has an ordinary node or ordinary cusp at~$P$.
In fact, if $Z\not\subseteq\mathrm{Supp}(D)$,~then $2=Z\cdot D>\frac{2}{\mu}$ by Theorem~\ref{theorem:Skoda}, which contradicts to~\eqref{equation:inequality}.
Therefore, we have $Z\subseteq\mathrm{Supp}(D)$.
Put $\widetilde{Z}=\tau(Z)$. Then $Z+\widetilde{Z}\sim -4K_{S}$ and
$$
\frac{3\lambda+1}{4}Z+\frac{1-\lambda}{4}\widetilde{Z}\sim_{\mathbb{Q}} \frac{1-\lambda}{4}\Big(Z+\widetilde{Z}\Big)+\lambda Z \sim_{\mathbb{Q}}-K_{S}+\lambda C.
$$
Furthermore, one can show (using Definition~\ref{definition:lct}) that the log pair
$$
\Bigg(S,\mu\frac{3\lambda+1}{4}Z+\mu\frac{1-\lambda}{4}\widetilde{Z}\Bigg)
$$
is log canonical at $P$.
Hence, we may assume that $\widetilde{Z}\not\subseteq\mathrm{Supp}(D)$ by Lemma~\ref{lemma:convexity}.

Write $D=\epsilon Z+\Delta$, where $\epsilon$ is a positive rational number,
and $\Delta$ is an effective $\mathbb{Q}$-divisor on the surface $S$ whose support does not contain $Z$ and $\widetilde{Z}$.
Then $2+4\lambda-6\epsilon=\widetilde{Z}\cdot\Delta\geqslant 0$.
Thus, we have $\epsilon\leqslant\frac{1+2\lambda}{3}$. Finally, we have
$$
2-2\epsilon=Z\cdot\Delta\geqslant\big(Z\cdot\Delta\big)_{P}.
$$
Therefore, if $\lambda\leqslant\frac{1}{2}$, then we can apply Lemma~\ref{lemma:local-5} to  $(S,D)$ with $x=2\lambda$ and $a=\epsilon$.
This~implies that $(S,D)$ is log canonical at $P$. But $\mu\leqslant 1$. Thus, we have $\lambda>\frac{1}{2}$.

We have $\mu\leqslant\frac{2}{1+2\lambda}$. Then $(S,\frac{2}{1+2\lambda}D)$ is not log canonical at~$P$. We have $\frac{2}{1+2\lambda}\epsilon\leqslant\frac{2}{3}$.
Thus, we can apply Lemma~\ref{lemma:local-6} to $(S,\frac{2}{1+2\lambda}D)$ with $x=\frac{4-4\lambda}{1+2\lambda}$
and $a=\frac{2}{1+2\lambda}\epsilon$, because
$$
\big(\Delta\cdot Z\big)_{P}\leqslant\frac{1+2\lambda}{2}\Bigg(\frac{4}{3}+\frac{2\frac{4-4\lambda}{1+2\lambda}}{3}-2\frac{2}{1+2\lambda}\epsilon\Bigg)=2-2\epsilon.
$$
This implies that  $(S,\frac{2}{1+2\lambda}D)$ is log canonical at $P$, which is absurd, since $\mu\leqslant\frac{2}{1+2\lambda}$.
\end{proof}

Since $Z$ is reducible, $Z=Z_1+Z_2$, where $Z_1$ and $Z_2$ are smooth irreducible curves.
Then $Z_1^2=Z_2^2=-1$~and~$Z_1\cdot Z_2=2$.
Moreover, we have $P\in Z_1\cap Z_2$ and $(Z_1\cdot Z_2)_P\leqslant 2$.
Furthermore, we have $Z_1\cap C=\varnothing$ and $Z_2\cap C=\varnothing$.

We have $Z_1\subseteq\mathrm{Supp}(D)$ and $Z_2\subseteq\mathrm{Supp}(D)$.
Indeed, if $Z_1\not\subseteq\mathrm{Supp}(D)$, then
$$
1=Z_1\cdot\Big(-K_S+\lambda C\Big)=Z_1\cdot D\geqslant\mathrm{mult}_{P}\big(Z_1\big)\mathrm{mult}_{P}\big(D\big)\geqslant\mathrm{mult}_{P}\big(D\big)>\frac{1}{\mu}\geqslant 1
$$
by Theorem~\ref{theorem:Skoda}.
This shows that $Z_1\subseteq\mathrm{Supp}(D)$. Similarly, we have $Z_2\subseteq\mathrm{Supp}(D)$.
But
$$
\big(1-\lambda\big)\mathcal{C}+\lambda\big(Z_1+Z_2\big)\sim_{\mathbb{Q}} -K_{S}+\lambda C.
$$
On the other hand, the log pair $(S,\mu(1-\lambda)\mathcal{C}+\mu\lambda(Z_1+Z_2))$ is log canonical at~$P$.
Therefore, we may assume that $\mathcal{C}\not\subseteq\mathrm{Supp}(D)$ by Lemma~\ref{lemma:convexity}.

Put $\widetilde{Z}_1=\tau(Z_1)$ and put $\widetilde{Z}_2=\tau(Z_2)$. Then $Z_1+\widetilde{Z}_1\sim -2K_{S}$ and $Z_2+\widetilde{Z}_2\sim -2K_{S}$.
This~gives $\mathcal{C}\cdot Z_1=\mathcal{C}\cdot Z_2=1$, $Z_1\cdot\widetilde{Z}_1=Z_2\cdot\widetilde{Z}_2=3$, $Z_1\cdot\widetilde{Z}_2=Z_2\cdot\widetilde{Z}_1=0$, $\widetilde{Z}_1\cdot C=\widetilde{Z}_2\cdot C=2$.
Moreover, we have $Z_1+Z_2\sim -K_{S}+C$. Then
$$
\frac{1+\lambda}{2}Z_1+\lambda Z_2+\frac{1-\lambda}{2}\widetilde{Z}_1\sim_{\mathbb{Q}}\frac{1-\lambda}{2}\Big(Z_1+\widetilde{Z}_1\Big)+\lambda\Big(Z_1+Z_2\Big)\sim_{\mathbb{Q}} -K_{S}+\lambda C
$$
Note that $P\not\in\widetilde{Z}_1$, because $P\in Z_2$ and $\widetilde{Z}_1\cdot Z_2=0$.
Using this, we see that the log pair
$$
\Bigg(S,\mu\frac{1+\lambda}{2}Z_1+\mu\lambda Z_2+\mu\frac{1-\lambda}{2}\widetilde{Z}_1\Bigg)
$$
is log canonical at the point $P$. Hence, we may assume that $\widetilde{Z}_1\not\subseteq\mathrm{Supp}(D)$ by Lemma~\ref{lemma:convexity}.
Similarly, we may assume that $\widetilde{Z}_2\not\subseteq\mathrm{Supp}(D)$ using Lemma~\ref{lemma:convexity} one more time.

Now let us write $D=\epsilon_1 Z_1+\epsilon_2 Z_2+\Delta$, where $\epsilon_1$ and $\epsilon_2$ are positive rational numbers,
and $\Delta$ is an effective divisor whose support does not contain $Z_1$ and $Z_2$.
Then 
$$
1+\lambda-\epsilon_1-\epsilon_2=\mathcal{C}\cdot\Delta\geqslant\mathrm{mult}_{P}\big(\Delta\big).
$$
This gives $\epsilon_1+\epsilon_2+\mathrm{mult}_{P}(\Delta)\leqslant 1+\lambda$. 
We also have $\epsilon_1\leqslant\frac{1+2\lambda}{3}$, since
$$
1+2\lambda-3\epsilon_1=\widetilde{Z}_1\cdot\Delta\geqslant 0.
$$
Similarly, see that $\epsilon_2\leqslant\frac{1+2\lambda}{3}$. Moreover, we have 
$$
1+\epsilon_1-2\epsilon_2=Z_1\cdot\Delta\geqslant\big(Z_1\cdot\Delta\big)_{P}.
$$ 
Finally, we have 
$$
1+\epsilon_2-2\epsilon_1=Z_2\cdot\Delta\geqslant\big(Z_2\cdot\Delta\big)_{P}.
$$

Thus, if $\lambda\leqslant\frac{1}{2}$, then we can apply Lemma~\ref{lemma:local-7} to $(S,D)$ with $x=2\lambda$, $a=\epsilon_1$ and $b=\epsilon_1$.
This implies that $(S,D)$ is log canonical at $P$, which is absurd. Hence, we have $\lambda>\frac{1}{2}$.

Since $\lambda>\frac{1}{2}$, we have $\mu\leqslant\frac{2}{1+2\lambda}$. Then the log pair $(S,\frac{2}{1+2\lambda}D)$ is not log canonical at~$P$.
On the other hand, we have $\frac{2}{1+2\lambda}\epsilon_1\leqslant\frac{2}{3}$ and $\frac{2}{1+2\lambda}\epsilon_2\leqslant\frac{2}{3}$.
We also have
$$
\frac{2}{1+2\lambda}\epsilon_1+\frac{2}{1+2\lambda}\epsilon_2+\frac{2}{1+2\lambda}\mathrm{mult}_{P}(\Delta)\leqslant\frac{2}{1+2\lambda}\big(1+\lambda\big)=\frac{2}{1+2\lambda}+\lambda\frac{2}{1+2\lambda}=\frac{4}{3}+\frac{\frac{4-4\lambda}{1+2\lambda}}{6},
$$
Moreover, we have
$$
\big(\Delta\cdot Z_1\big)_{P}\leqslant1+\epsilon_1-2\epsilon_2=\frac{1+2\lambda}{2}\Bigg(\frac{2}{3}+\frac{\frac{4-4\lambda}{1+2\lambda}}{3}+\frac{2}{1+2\lambda}\epsilon_1-2\frac{2}{1+2\lambda}\epsilon_2\Bigg).
$$
Furthermore, we also have
$$
\big(\Delta\cdot Z_2\big)_{P}\leqslant 1+\epsilon_1-2\epsilon_2=\frac{1+2\lambda}{2}\Bigg(\frac{2}{3}+\frac{\frac{4-4\lambda}{1+2\lambda}}{3}+\frac{2}{1+2\lambda}\epsilon_2-2\frac{2}{1+2\lambda}\epsilon_1\Bigg).
$$
Thus, we can apply Lemma~\ref{lemma:local-8} to $(S,\frac{2}{1+2\lambda}D)$ with $x=\frac{4-4\lambda}{1+2\lambda}$, $a=\frac{2}{1+2\lambda}\epsilon_1$ and $b=\frac{2}{1+2\lambda}\epsilon_2$.
This implies that $(S,\frac{2}{1+2\lambda}D)$ is log canonical at $P$, which is absurd.

The obtained contradiction completes the proof of Theorem~\ref{theorem:main}.

\end{document}